\crefname{appsec}{Appendix}{Appendices}  % counter for appendix name 
\let\oldCref\Cref\relax
\renewcommand\Cref[1]{%
	\IfStrEq{#1}{Alg:dist}{%
		\hyperref[Alg:dist]{TriPD-Dist~(Alg.~3)}%
	}{%
		\IfStrEq{#1}{Alg:synch-1}{%
		\hyperref[Alg:synch-1]{TriPD~(Alg.~1)}%
	}{%
		\IfStrEq{#1}{Alg:BC}{%
		\hyperref[Alg:BC]{TriPD-BC~(Alg.~2)}%
	}{%
		\oldCref{#1}%
	}%
	}%
	}%
}
\preto\subequations{\ifhmode\unskip\fi} % to avoid blank space before subequations
\newcommand\oblong{\mathbin{\scaleobj{0.75}{\square}}}
\newcommand{\y}{\mathsf{y}}
\newcommand{\w}{\mathsf{w}}
\newcommand{\h}{\mathsf{h}}
\DeclareMathOperator\blkdiag{blkdiag}
\DeclareMathOperator\ri{ri}
\DeclareMathOperator\gra{gra}%											graph
\let\nabla\OLDnabla
\let\id\relax
\newcommand\id{{\rm Id}}%												graph
\DeclareMathOperator*{\E}{\mathbb{E}}
\newcommand{\Nn}{{\rm{I\!N}}}
\def\captionsetup#1{}
	\newcounter{ALC@unique}
	\definecolor{myblue}{rgb}{0, 0.1, 1}
\newcommand{\hide}[1]{}
\definecolor{mygreen}{rgb}{0.0, 0.5, 0.0}
\def\nf#1{{\color{red}#1}}
\newcommand{\cf}{\textit{cf. }}
\renewcommand{{%
		\pgfkeys{/pgf/images/include external/.code={\includegraphics[2]{[}}}%
		\tikzsetnextfilename{[}%
		\input{./TeX/Tikz/[.tex}%
	}}]{\includegraphics[#1]{Pics/Tikz/#2.pdf}}
\begin{document}
\title{\LARGE \bf
%A new block-coordinate primal-dual algorithm with large stepsizes and its application to distributed optimization
%\nf{A new primal-dual operator splitting method and applications to distributed optimization}
%}
A New Randomized Block-Coordinate Primal-Dual Proximal Algorithm for Distributed Optimization}

\author{Puya Latafat, Nikolaos M. Freris, Panagiotis Patrinos 
	\thanks{Puya Latafat\textsuperscript{1,2} \ puya.latafat@kuleuven.be
		
		Nikolaos M. Freris \textsuperscript{3} \
		nfreris2@gmail.com
		
		Panagiotis Patrinos\textsuperscript{1} \ panos.patrinos@esat.kuleuven.be}
	\\
	\thanks{ 
		%The work of P. Patrinos was supported by KU Leuven Research Council BOF/STG-15-043. 
		The work of the first and third authors was supported by: FWO PhD grant 1196818N; FWO research projects G086518N and G086318N; KU Leuven internal funding StG/15/043; Fonds de la Recherche Scientifique -- FNRS and the Fonds Wetenschappelijk Onderzoek -- Vlaanderen under EOS Project no 30468160.

		The work of the second author, while with New York University Abu Dhabi and New York University Tandon School of Engineering, was supported by the US National Science Foundation under grant CCF-1717207. 

			\textsuperscript{1}KU Leuven, Department of Electrical Engineering (ESAT-STADIUS), Kasteelpark Arenberg 10, 3001 Leuven-Heverlee, Belgium.
			
		\textsuperscript{2}IMT School for Advanced Studies Lucca, Piazza San Francesco 19, 55100 Lucca, Italy.
		
		\textsuperscript{3}
		University of Science and Technology of China, School of Computer Science and Technology, Hefei, 230000, China.
		% 	\textsuperscript{4}
		% New York University, Tandon School of Engineering, Brooklyn, NY 11201, USA.
	} 
}
%\markboth{Journal of \LaTeX\ Class Files,~Vol.~14, No.~8, August~2015}%
%{Shell \MakeLowercase{\textit{et al.}}: Bare Demo of IEEEtran.cls for IEEE Journals}

\maketitle
%\thispagestyle{empty}
%\pagestyle{empty}

%%%%%%%%%%%%%%%%%%%%%%%%%%%%%%%%%%%%%%%%%%%%%%%%%%%%%%%%%%%%%%%%%%%%%%%%%%%%%%%%
\begin{abstract}	
This paper proposes TriPD, a new  primal-dual algorithm for minimizing the sum of a Lipschitz-differentiable convex function and two possibly nonsmooth convex functions, one of which is composed with a linear mapping. We devise a randomized block-coordinate version of the algorithm which converges under the same stepsize conditions as the full algorithm. It is shown that both the original as well as the block-coordinate scheme feature linear convergence rate when the functions involved are either piecewise linear-quadratic, or when they satisfy a certain quadratic growth condition (which is weaker than strong convexity). Moreover, we apply the developed  algorithms to the problem of multi-agent optimization on a graph, thus obtaining novel synchronous and asynchronous distributed methods. The proposed  algorithms are fully distributed in the sense that the updates and the stepsizes of each agent only depend on local information. In fact, no prior global coordination is required. 
Finally, we showcase an application of our algorithm in distributed formation control. 
\end{abstract}
\begin{IEEEkeywords} Primal-dual algorithms,  block-coordinate minimization, distributed optimization, randomized algorithms, asynchronous algorithms. 
\end{IEEEkeywords}
%%%%%%%%%%%%%%%%%%%%%%%%%%%%%%%%%%%%%%%%%%%%%%%%%%%%%%%%%%%%%%%%%%%%%%%%%%%%%%%%

% \nf{Would it be useful to find an acronym for the algorithm; RaBDO or something catchier?}

\section{Introduction}
\label{sec:intro}
%!TEX root = ../../paper.tex

In this paper we consider the optimization problem 
  \begin{equation} \label{eq:equiv-prob}
  \underset{x\in\R^n}{\minimize}\ {f}({x})+{g}({x})+{{h}}({L}{x}),
  \end{equation}
 where $L$ is a linear mapping, $h$ and $g$ are proper, closed, convex functions (possibly nonsmooth), and $f$ is convex,  continuously differentiable with Lipschitz-continuous gradient. We further assume that the \emph{proximal mappings} associated with $h$ and $g$
are efficiently computable~\cite{combettes2011proximal}.
This setup is quite general and captures a wide range of applications in signal processing, machine learning and control. 
% \hl{We note that throughout the paper we adopt the Euclidean space $\R^n$ for simplicity, with the understanding that the analysis can be extended to general Hilbert spaces.}
% \nf{I recommend removing this, as a reviewer may question the novelty over the AFBA paper; we can address this in the answer letter}. \textcolor{orange}{agreed.}

In problem \eqref{eq:equiv-prob}, it is 
typically assumed that the gradient of the smooth term $f$ is  $\beta_f$-Lipschitz for some nonnegative constant $\beta_f$.  
% Nevertheless, in many applications of practical interest, a scalar Lipschitz constant fails to properly capture the Lipschitz continuity of $\nabla f$. For this reason, we consider the Lipschitz continuity of $\nabla f$ with respect to $\|\cdot\|_Q$ with $Q\succ0$ in place of the canonical norm (\textit{cf.} \eqref{eq:Lipz}). 
We consider  Lipschitz continuity of $\nabla f$ with respect to $\|\cdot\|_Q$ with $Q\succ0$ in place of the canonical norm (\textit{cf.} \eqref{eq:Lipz}). This is because in many applications of practical interest, a scalar Lipschitz constant fails to accurately capture the Lipschitz continuity of $\nabla f$.
A prominent example lies in distributed optimization, where $f$ is separable, \ie  $f(x)=\sum_{i=1}^{m}f_i(x_i)$. In this case, the metric $Q$ is taken block-diagonal  
with blocks containing the Lipschitz constants of the $\nabla f_i$'s. Notice that in such  settings considering a scalar Lipschitz constant  results in using the largest of the Lipschitz constants, %would result in using the maximum of the Lipschitz constants,  which 
 which leads to conservative stepsize selection and consequently slower convergence rates.

The main contributions of the paper are elaborated upon in four separate sections below.

\subsection{A New Primal-Dual Algorithm}\label{sec:introI}
In this work a new primal-dual algorithm, \Cref{Alg:synch-1}, is introduced for solving~\eqref{eq:equiv-prob}. 
The algorithm consists of  two proximal evaluations (corresponding to the two nonsmooth terms $g$ and $h$), one gradient evaluation (for the smooth term $f$), and one correction step (\textit{cf.} \cref{Alg:synch-1}). 
We adopt the general Lipschitz continuity assumption \eqref{eq:Lipz} in our convergence analysis, which is essential for avoiding conservative stepsize conditions that depend on the \emph{global} scalar Lipschitz constant.

% Throughout tis paper we say a sequence is $S$-Fej\'er monotone if it is Fej\'er monotone (with respect to the set of primal-dual solutions) in the space equipped with $\langle\cdot,\cdot\rangle_S$. In \Cref{subsec:NewPD} it is shown that the sequence generated by \Cref{Alg:synch-1} is  
% $S$-Fej\'er monotone, where $S$ is a  block diagonal positive definite matrix. This key property is exploited  in \Cref{subsec:CD} to develop a block-coordinate version of the algorithm with a general randomized activation scheme. 

%   Fej\'er monotone  with respect to $\|\cdot\|_S$ where $S$ is a  block diagonal positive definite matrix. This key property is exploited  in \Cref{subsec:CD} to develop a block-coordinate version of the algorithm with a general randomized activation scheme. 

 In \Cref{subsec:NewPD}, it is shown that the sequence generated by \Cref{Alg:synch-1} is  
$S$-Fej\'er monotone (with respect to the set of primal-dual solutions),\footnote{Given a symmetric positive definite matrix $S$, we say that a sequence is $S$-Fej\'er monotone  with respect to a set $C$ if it is Fej\'er monotone with respect to $C$ in the space equipped with $\langle{}\cdot{},{}\cdot{}\rangle_S$.} where $S$ is a  block diagonal positive definite matrix. This key property is exploited  in \Cref{subsec:CD} to develop a block-coordinate version of the algorithm with a general randomized activation scheme.

The connections of our method to other related primal-dual algorithms in the literature are discussed in  \Cref{subsec:VuCondat}. Most notably, we recap the V\~u-Condat scheme \cite{vu2013splitting,condat2013primal}, a popular algorithm used for solving the structured optimization problem~\eqref{eq:equiv-prob} (convergence of this method was established independently by V\~u~\cite{vu2013splitting } and Condat~\cite{condat2013primal}, by casting it in the form of the forward-backward splitting%\cite{lions1979splitting}
). In the analysis of \cite{vu2013splitting,condat2013primal}, a scalar constant is used to capture the Lipschitz continuity of the gradient of $f$, thus resulting in potentially smaller stepsizes (and slower convergence in practice).  %Adapting the proof of \cite{vu2013splitting,condat2013primal} to the case of general Lipschitz condition~\eqref{eq:Lipz}, leads to conservative conditions for selecting the stepsizes that depend on $\|Q\|$ (see \cite[proof of Thm. 3.1]{condat2013primal}). 
In \cite{combettes2014forward}, the authors assume 
the more general Lipschitz continuity property \eqref{eq:Lipz}  by using a preconditioned variable metric forward-backward iteration. Nevertheless, the stepsize matrix is restricted to be proportional to $Q^{-1}$. 
In \Cref{subsec:VuCondat}, we show how the analysis technique for the new primal-dual algorithm can be used to recover the V\~u-Condat algorithm with general stepsize matrices, and highlight that this line of analysis leads to \emph{less restrictive} sufficient conditions on the selected stepsizes compared to \cite{vu2013splitting,condat2013primal,combettes2014forward}. More importantly, it is shown that unlike \Cref{Alg:synch-1}, the V\~u-Condat generated sequence is $S$-Fej\'er monotone, where $S$  is \emph{not diagonal}. As we discuss in the next subsection, this constitutes the main difficulty in devising a randomized version of the V\~u-Condat algorithm.

\subsection{Randomized Block-Coordinate Algorithm} \label{subsec:BC}

Block-coordinate (BC) minimization is a simple approach for tackling large-scale optimization problems. At each iteration,  a subset of the coordinates is updated while others are held fixed. \emph{Randomized} BC algorithms are  of particular interest, and can be divided into two main  categories: 

{\bf Type a}) comprises algorithms in which \emph{only one} coordinate is randomly activated and updated at each iteration. The BC versions of gradient \cite{nesterov2012efficiency} and proximal gradient methods \cite{richtarik2014iteration} belong in this category. A distinctive attribute of the aforementioned algorithms is the fact that the stepsizes are selected to be inversely proportional to the \emph{coordinate-wise} Lipschitz constant of the smooth term rather than the global one. This results in applying larger stepsizes in directions with smaller Lipschitz constant, and therefore leads to faster convergence. 

{\bf Type b}) contains methods where \emph{more than one} coordinate may be randomly activated and simultaneously updated \cite{bianchi2015coordinate,combettes2015stochastic}. Note that this class may also capture the single active coordinate (type a) as a special case. 
The convergence condition for this class of BC algorithms is typically the same as in the full algorithm.  
%(with a notable  exception in least-squares, where coordinate-wise Lipschitz constants may be used for determining stepsizes~\cite{REK}).  
% In \cite{bianchi2015coordinate} randomized BC is applied to $\alpha$-averaged operators by establishing stochastic Fej\'er monotonicity, while \cite{combettes2015stochastic} further  extends to quasi-nonexpansive operators. 
In \cite{bianchi2015coordinate,combettes2015stochastic} random BC is applied to $\alpha$-averaged operators by establishing stochastic Fej\'er monotonicity, while \cite{combettes2015stochastic} also considers quasi-nonexpansive operators. 
In \cite{pesquet2014class,bianchi2015coordinate} the authors obtain randomized BC algorithms based on the primal-dual scheme of V\~u and Condat; the main drawback is that, just as in the full version of these algorithms, the use of conservative stepsize conditions leads to slower convergence in practice.

% A main drawback of all these approaches lies in that (just as in the full version of these algorithms) a scalar is used as opposed to a metric for capturing Lipschitz continuity, thus leading to conservative conditions for stepsize selection. 

The BC version of \Cref{Alg:synch-1} falls into the second class, \ie, it allows for a general randomized activation scheme (\textit{cf.} \cref{Alg:BC}). The proposed scheme converges under the same stepsize conditions as the full algorithm. As a consequence, in view of the characterization of Lipschitz continuity of $\nabla f$ in \eqref{eq:Lipz}, when $f$ is separable, \ie, $f(x)=\sum_{i=1}^m f_i(x_i)$, %(as in the {distributed optimization} problem in \cref{sec:DistOpt}) 
our approach leads to algorithms that depend on the \emph{local} Lipschitz constants (of $\nabla f_i$'s) rather than the global constant, thus assimilating the benefits of both categories. Notice that when $f$ is separable, the coordinate-wise Lipschitz continuity assumption of  \cite{nesterov2012efficiency,richtarik2014iteration,fercoq2015coordinate} is equivalent to \eqref{eq:Lipz} with $\beta_f=1$ and $Q=\blkdiag(\beta_1I_{n_1},\ldots,\beta_mI_{n_m})$, where $m$ denotes the number of coordinate blocks, $n_i$ denotes the dimension of the $i$-th coordinate block, and $\beta_i$ denotes the Lipschitz constant of $f_i$. In the general setting, \cite[Lem. 2]{nesterov2012efficiency} can be invoked to establish the connection between the metric $Q$ and the  coordinate-wise Lipschitz assumption. However, in many  cases (most notably the separable case) this lemma is conservative.

As mentioned in the prequel, in \Cref{subsec:VuCondat} the V\~u-Condat algorithm is recovered using the same analysis that leads to our proposed primal-dual algorithm. 
% As mentioned in the prequel, the V\~u-Condat algorithm can be recovered using the same analysis for deriving our proposed  primal-dual algorithm (\cf \cref{subsec:VuCondat}). 
It is therefore natural to consider adapting the approach of  \Cref{subsec:CD} so as to devise a block-coordinate variant of the the V\~u-Condat algorithm. However, this is not possible given that the V\~u-Condat generated sequence is $S$-Fej\'er monotone, where $S$ is \emph{not diagonal} (\cf \eqref{S:vu-condat}), while the proof of \Cref{thm:Fejerlike-dg} relies heavily on the diagonal structure of $S$. This presents a distinctive merit of our proposed algorithm over the current state-of-the-art for solving problem~\eqref{eq:equiv-prob}.

% As stated before, in \cref{subsec:VuCondat} the V\~u-Condat algorithm is recovered using the same analysis that led to the new algorithm, and it is shown that the  sequence generated by the V\~u-Condat algorithm is Fej\'er monotone with respect to $\|\cdot\|_S$, where $S$ is not diagonal (cf. \eqref{S:vu-condat}). 
% Therefore, it is natural to wonder if it is possible to adapt the approach in \cref{subsec:CD} for the V\~u-Condat algorithm. 
% The answer is no; The difficulty comes from the fact that the proof of \Cref{thm:Fejerlike-dg} relies heavily on the diagonal structure of $S$. 

% the  proof of \Cref{thm:Fejerlike-dg} rely heavily on the diagonal structure of $S$, 
%  the difficulty comes from the fact that the sequence generated by the V\~u-Condat algorithm is Fej\'er monotone with respect to $\|\cdot\|_S$, where $S$ is not diagonal (cf. \eqref{S:vu-condat}). 

In \cite{fercoq2015coordinate}, the authors propose a  randomized BC version of the V\~u-Condat scheme. Their analysis does not require the cost functions to be separable and utilizes a different Lyapunov function for establishing convergence. %In its general form, the method requires introducing duplicate dual variables, which also serves to eliminate wasteful computations that may occur due to the sequential nature of the updates; we also adopt this approach in \cref{sec:DistOpt} where we showcase an application of \cref{subsec:CD} to distributed optimization. 
Notice  that the block-coordinate scheme of \cite{fercoq2015coordinate} updates a single coordinate at every iteration (\ie it is a type a) algorithm) as opposed to the more general random sweeping of the coordinates. % that we consider in \cref{subsec:CD}. 
 Additionally,  in the case of $f$ being separable, our proposed method (\cf \cref{Alg:BC}) assigns a block stepsize that is inversely proportional to $\tfrac{\beta_i}{2}$ (where $\beta_i$ denotes the Lipschitz constant for $f_i$), in place of ${\beta_i}$ required by \cite[Assum. 2.1(e)]{fercoq2015coordinate}: larger stepsizes are typically associated with faster convergence in primal-dual proximal algorithms.

\subsection{Linear Convergence} \label{subsec:linear}
A third contribution of the paper is establishing linear convergence for the full algorithm under an additional \emph{metric subregularity} condition for the monotone operator pertaining to the primal-dual optimality conditions (\textit{cf.} \cref{Thm: metricSub}).  
 	 For the BC version, the linear rate is established under a slightly stronger condition (\textit{cf.} \cref{thm:linearCon}). 
We further explicate the required condition in terms of the objective functions, with two special cases of prevalent interest: 
 a) when $f$, $g$ and $h$ satisfy a \emph{quadratic growth} condition (\textit{cf.} \cref{lem:QuadGrow}) (which is \emph{much weaker than strong convexity}) or b) when $f$, $g$ and $h$ are \emph{piecewise linear-quadratic} (\textit{cf.} \cref{lem:PLQ}), a common scenario in many applications such as LPs, QPs, SVM and fitting problems for a wide range of regularization functions; \textit{e.g.} $\ell_1$ norm, elastic nets, Huber loss and many more.  
%  \nf{such as? MPC, etc.}}   
% \nf{Does that mean that LP can be solved with linear rate? this seems potentially misleading.} 	\textcolor{orange}{Yes it does, because in LP we have quadratic, and indicator of polyhedral sets.} 
 	 % \begin{enumerate}
 	 % 	\item If $f$, $g$ and $h$ satisfy a \emph{quadratic growth} condition (\textit{cf.} \Cref{lem:QuadGrow}). Notice that this assumption is much weaker than strong convexity.
 	 % 	\item  If $f$, $g$ and $h$ are \emph{piecewise linear-quadratic} (cf. \Cref{lem:PLQ}).  
 	 % \end{enumerate}
 	
 	 % Two sufficinet conditions are derived in terms of the cost function for the metric subregulairty assumption. In particular it is shown that if the cost functions $f$, $g$ and $h$ are \emph{piecewise linear-quadratic} (cf. \Cref{lem:PLQ})

 	 % Two sufficient conditions are derived for the cost functions under which the metric subregularity assumptions hold: a) if the cost functions $f$, $g$ and $h$ are \emph{piecewise linear-quadratic} (cf. \Cref{lem:PLQ}), b) if  the cost functions $f$, $g$ and $h$ satisfy a  quadratic growth of the cost functions (\textit{cf.} \Cref{lem:QuadGrow}). 

 	 % ,  linear convergence rate is achieved. 
 	 % A second sufficient condition is provided in terms of the quadratic growth of the cost functions (\textit{cf.} \Cref{lem:QuadGrow}) which are much weaker than strong convexity assumption.

 	% Moreover, sufficient conditions are provided in terms of the quadratic growth of the cost functions (\textit{cf.} \Cref{lem:QuadGrow}) for both algorithms which are much weaker than strong convexity assumption. 

	Last but not least, it is shown that the monotone operator defining the primal-dual optimality conditions is metrically subregular if and only if the residual mapping (the operator that maps $z^k$ to $z^k-z^{k+1}$) is metrically subregular (\textit{cf.} \cref{lem:EquivMetr}). This connection enables the use of \Cref{lem:QuadGrow,lem:PLQ} to establish linear convergence for a large class of algorithms based on conditions for the cost functions.

\subsection{Distributed Optimization} 
As an important application, we consider a distributed structured optimization problem over a network of agents. %and leverage our findings to devise a new \emph{fully distributed} primal-dual method.  
In this context, each agent has its own private cost function of the form~\eqref{eq:equiv-prob}, while the communication among agents is captured by an undirected graph $\mathcal{G}=(\mathcal{V},\mathcal{E})$: 
 % \begin{subequations*} %\label{prob:GenProblem-intro}
 	\begin{align*}
 		\underset{{x}_1,\ldots,x_m}{\minimize}&\quad \sum_{i=1}^mf_i(x_i)+g_i(x_i)+h_i\left(L_ix_i\right)%\label{eq:ds-1}
 		\\\stt&\quad A_{ij}x_i + A_{ji}x_j=b_{(i,j)}
 		\qquad(i,j)\in \mathcal{E}. %\label{eq:ds-2}%\; (\forall j\in{\mathcal{N}_i}) \qquad\textrm{for} \;i=1,\cdots,M \;
 	\end{align*}
 % \end{subequations*}
% For ease of exposition we assume an undirected graph. \textcolor{orange}{what?}
%\nfc{Can move first sentence to notation.}
We use $(i,j)$ to denote the unordered pair of agents $i$, $j$, and $ij$ to denote the ordered pair.  The goal is to solve the global optimization problem through local exchange of information.  
Notice that the linear constraints on the edges of the graph prescribe relations between neighboring agents' variables. This type of edge constraints was also considered in \cite{zhang2015bi}. It is worthwhile noting that for the special case of two agents $i=1,2$, with $f_i,h_i\equiv 0$, one recovers the setup for the celebrated \emph{alternating direction method of multipliers} (ADMM) algorithm. Another special case of particular interest is \emph{consensus optimization}, when $A_{ij} = I$, $A_{ji}=-I$ and $b_{(i,j)}=0$.  
A primal-dual algorithm for consensus optimization was introduced in \cite{latafat2016new} for the case of $f_i\equiv0$, where a transformation was used to replace the edge variables with node variables.

This multi-agent optimization problem arises in many contexts such as sensor networks, power systems, transportation networks, robotics, water networks, distributed data-sharing, etc. \cite{boyd2006randomized,jadbabaie2003coordination, raffard2004distributed}. In most of these applications, there are computation, communication and/or physical limitations on the system  that render centralized management infeasible. This motivates the \emph{fully} distributed synchronous and asynchronous algorithms developed in \Cref{sec:DistOpt}. % for problem~\eqref{prob:GenProblem-intro}.  
% This motivates the urge for \emph{fully} \emph{distributed asynchronous} algorithms, which we address in  \cref{sec:DistOpt} by devising a novel primal-dual distributed asynchronous method for the multi-agent composite  problem~\eqref{prob:GenProblem-intro}.  
Both versions are fully distributed in the sense that not only the iterations are performed locally, but also the stepsizes of each agent are selected based on local information without any prior global coordination (\cf \Cref{ass:Diststep}). %In fact, the \emph{edge stepsize/weight} $\kappa_{(i,j)}$ is the only parameter that must be fixed by the neighboring agents $i$ and $j$. 
The asynchronous variant of the algorithm is based on an instance of the randomized block-coordinate algorithm in \Cref{subsec:CD}. The protocol is as follows:  at each iteration, a) agents are activated at random, and independently from one another, b) active agents perform local updates, c)  
%\ie, several agents might update their values at a given iteration. Upon performing local updates 
they communicate the required updated values to their neighbors and d) return to an idle state.

		 	%!TEX root = ../../paper.tex
\subsection*{Notation and Preliminaries}

In this section, we introduce notation and definitions used throughout the paper; the interested reader is referred to    \cite{rockafellar2009variational,bauschke2011convex} for more details.   

For an extended-real-valued function $f$, we use $\dom f$ to denote its domain. For a set $C$, we denote its relative interior by $\ri C$. The identity matrix is denoted by $I_n\in\R^{n\times n}$.  %, as well as $I$ (when the dimension is clear from the context).
 For a symmetric positive definite matrix $P\in \R^{n\times n}$, we define the scalar product $\langle x,y \rangle_P=\langle x,Py \rangle$ and the induced norm 
$\|x\|_P = \sqrt{\langle x,x\rangle_P}$. %The standard Euclidean norm (for $P=I$) is denoted by $\|\cdot\|$. 
For simplicity, we use matrix notation for linear mappings when no ambiguity occurs.
% \nf{slightly confusing in this place; we may skip this}. \textcolor{orange}{Isn't it better to keep $L$ as linear mapping (rather matrix) and to keep this statement to avoid confusion? }

An operator (or set-valued mapping) $A:\R^n\rightrightarrows\R^d$ maps each point $x\in\R^n$ to a subset $Ax$ of $\R^d$. We denote the domain of $A$ by $\dom A=\{x\in\R^n\mid Ax\neq\emptyset\}$, its graph by $\gra A=\{(x,y)\in\R^n\times \R^d\mid y\in Ax\}$, %its image by $\im A = \{y\in\R^d\mid \exists x\in\R^n: (x,y)\in\gra A\}$, 
the set of its zeros by $\zer A=\{x\in\R^n \mid 0\in Ax\}$, and the set of its fixed points by $\fix A = \{x\mid x\in Ax\}$. The mapping $A$ is called monotone if 
$
\langle x-x^\prime,y-y^\prime\rangle\geq0$ for all $(x,y),(x^\prime,y^\prime)\in\gra A$, 
and is said to be maximally monotone if its graph is not strictly contained by the graph of another monotone operator. The inverse of $A$ is defined through its graph: $\gra A^{-1}:=\{(y,x)\mid (x,y)\in\gra A\}$.
The \emph{resolvent} of $A$ is defined by $J_A:=(\id+A)^{-1}$, where $\id$ denotes the identity operator. 

Let $f:\R^n\to\Rinf\coloneqq\R\cup\{+\infty\}$ be a proper closed, convex function. %The domain of $f$ is $\dom f=\{x\in\R^n \mid f(x)<+\infty \}$.
Its subdifferential is the operator $\partial f: \R^n\rightrightarrows\R^n$  %with graph:
%$$
%\gra \partial f=\{(x,y)\mid\forall z\in\R^{n},\,\langle z-x,y\rangle+f(x)\leq f(z)\}.
%$$
$$
\partial f(x)=\{y\mid\forall z\in\R^{n},\,f(x)+\langle y,z-x\rangle\leq f(z)\}.
$$
It is well-known that the subdifferential of a convex function is maximally monotone. 
The resolvent of $\partial f$ is called the \emph{proximal operator} (or proximal mapping), and is single-valued. Let $V$ denote a symmetric positive definite matrix. The proximal mapping of $f$ relative to $\|\cdot\|_V$ is uniquely determined by the resolvent of $V^{-1} \partial f$: 
 \begin{align*}
 \prox_f^V(x)&\coloneqq (\id+ V^{-1}\partial f)^{-1}x\\&=\argmin_{z\in\R^n}\{ f(z) + \tfrac{1}{2}\|x-z\|_V^2\}.
 \end{align*}
The \emph{Fenchel conjugate} of $f$, denoted by $f^*$, is defined by 
$f^*(v)\coloneqq \sup_{x\in\R^n}\{ \langle v,x\rangle-f(x)\}$.
The \emph{Fenchel-Young} inequality states that $\langle x,u\rangle\leq f(x)+f^*(u)$ holds for all $x,u\in\R^n$; in the special case when $f=\tfrac{1}{2}\|\cdot\|^2_V$ for some symmetric positive definite matrix $V$, this gives:
\begin{equation}\label{eq:Fenchel-Young}
  	\langle x,u\rangle\leq \tfrac{1}{2}\|x\|_V^2+\tfrac{1}{2}\|u\|^2_{V^{-1}}.
  \end{equation}  
% which can be seen as a generalized Cauchy-Schwartz inequality. 

Let $X$ be a nonempty closed convex set. The indicator of $X$ is defined by $\delta_X(x)=0$ if $x\in X$, and $\delta_X(x)=\infty$ if $x
\notin X$. 
The distance from $X$ and the projection onto $X$ with respect to $\|\cdot\|_V$ are denoted by  $d_V(\cdot,X)$ and $\mathcal{P}_X^V(\cdot)$, respectively. 
% The distance from $X$ with respect to $\|\cdot\|_V$ is denoted by  $d_V(\cdot,X)$, and the projection  onto $X$ with respect to $\|\cdot\|_V$ by  $\mathcal{P}_X^V(\cdot)$. 

We use $(\Omega,\mathcal{F},\mathbb{P})$ for defining a probability space, where $\Omega$, $\mathcal{F}$ and $\mathbb{P}$ denote the sample space, $\sigma$-algebra, and the probability measure. %
% Let $\mathscr{F}=\seq{\mathcal{F}_k}$ be a \emph{filtration of $\mathcal{F}$}, \ie, a sequence of sub-sigma algebras of $\mathcal{F}$ such that $\mathcal{F}_k\subset \mathcal{F}_{k+1}$ for all $k\in\Nn$.
 Moreover, \emph{almost surely} is abbreviated as a.s.  

The sequence $\seq{w^k}$ is said to converge to $w^{\star}$ $Q$-linearly with $Q$-factor $\sigma\in(0,1)$\nolinebreak, if there exists $\bar{k}\in\N$ such that for all $k\geq\bar{k}$,   $\|w^{k+1}-w^{\star}\|\leq\sigma\|w^{k}-w^{\star}\|$. Furthermore, $\seq{w^k}$ is said to converge to $w^{\star}$ $R$-linearly if there exists a sequence of nonnegative scalars $({v_k})_{k\in\N}$ such that  $\|w^{k}-w^{\star}\|\leq v^k$ and $\seq{v_k}$ converges to zero $Q$-linearly.
		 		
			\section{A New Primal-Dual Algorithm}\label{subsec:NewPD}
		%!TEX root = ../../paper.tex

In this section we present a primal-dual algorithm for problem \eqref{eq:equiv-prob}. 
% :  
%   \begin{equation} \label{eq:equiv-prob}
%   \underset{x\in\R^n}{\minimize}\ {f}({x})+{g}({x})+{{h}}({L}{x}). 
%   \end{equation}
%   \nf{why repeat and re-number? perhaps not renumber} \textcolor{orange}{I added it so as to be clear. Do you think it is ok to remove it?}
  We adhere to the following assumptions throughout  \cref{subsec:NewPD,subsec:CD,sec:linConv}:
  \begin{ass}  \label{ass:1}
  	\quad 
	
  	%For $i=1,\ldots,m$: 
  	\begin{enumerate}
%  		\item   is a linear operator.
  		\item $g:\R^{n}\to\Rinf$, $h:\R^{r}\to\Rinf$ are proper, closed, convex functions, and  $L:\R^{n}\to \R^r$ is a linear mapping. %\nf{$L \in \R^{r\times n}$ is a matrix} 
  		\item \label{ass:1-3}${f}:\R^{n}\to\R$ is  convex, continuously differentiable, and for some $\beta_f\in[0,\infty)$, $\nabla f$ is $\beta_f$-Lipschitz continuous with respect to the metric induced by $Q\succ0$ , \ie, %for all $x,y\in \R^n$:  
  		\begin{equation}\label{eq:Lipz}
  		\|\nabla f(x)-\nabla f(y)\|_{Q^{-1}}\!\!\leq \!\beta_f\|x-y\|_{Q}\quad \forall x,y\in \R^n.
  		\end{equation}
  %     \nf{repeat (2)? We have to decice how to cite problem formulation and assumptions in the paper: I recommend not numbering in intro, and numbering here in the problem formulation}
  %     \textcolor{orange}{Agreed. We should decide how to introduce these. But Unless we separate the Lipschitz continuity from other assumptions and put it in intro, I don't see another way. This is a bit wierd since the assumptions on $g$, $h$ are here. }
		\item \label{ass:1-4} The set of solutions to~\eqref{eq:equiv-prob}  is nonempty. Moreover, there exists $x\in\ri \dom g$ such that $Lx\in\ri \dom h$. 
  	\end{enumerate}
  \end{ass} 
  In \Cref{ass:1-3}, the constant $\beta_f\ge 0$ is not absorbed into the metric $Q$ in order to also incorporate the case when $\nabla f$ is a constant (by setting $\beta_f=0$). 
  	   
  The dual problem is to 
   \begin{equation} \label{eq:dualprob}
  \minimize_{u\in\R^r} (g+f)^*(-L^\top u)+ h^*(u).
\end{equation}
With a slight abuse of terminology, we say that $({u}^\star,{x}^\star)$ is a \emph{primal-dual solution} (in place of dual-primal) if ${u}^\star$ solves the dual problem~\eqref{eq:dualprob} and ${x}^\star$ solves the primal problem~\eqref{eq:equiv-prob}.  We denote the set of  primal-dual solutions by $\mathcal{S}$. 
%By the constraint qualification of 
\Cref{ass:1-4} guarantees that the set of solutions to the dual problem is nonempty and the duality gap is zero \cite[Corollary 31.2.1]{rockafellar2015convex}. Furthermore, the pair $(u^\star,x^\star)$ is a primal-dual solution if and only if it satisfies:
 \begin{equation} \label{eq:primal-dual}
 \begin{cases}
 0\in\partial {{h}}^{*}({u})-{L}{x}, & \ \ \ \ \ \ \\0\in\partial {g}({x})+\nabla {f}(x)+{L}^{\top}{u}.& \ \ \ \ \ \ 
 \end{cases} 
 \end{equation}
 %\textcolor{green}{PL: Why the numbering D and P?}\nfc{To highlight the dual and primal optimality conditions.}
%With a slight abuse of terminology, we say that $({u}^\star,{x}^\star)$ is a primal-dual solution (in place of dual-primal) if it satisfies~\eqref{eq:primal-dual}, where ${u}^\star$ solves the dual problem and ${x}^\star$ solves the primal problem~\eqref{eq:equiv-prob}. We denote the set of primal-dual solutions by $\mathcal{S}$. \Cref{ass:1-4} guarantees that $\mathcal{S}$ is non empty (see \cite[Corollary 31.2.1]{rockafellar2015convex} and \cite[Proposition 4.3(iii)]{combettes2012primal}).
  \iffalse
  \begin{rem}
  	The assumption of coordinate-wise Lipschitz continuity is used in \cite{nesterov2012efficiency,richtarik2014iteration,fercoq2015coordinate} in place of~\eqref{eq:Lipz}. When $f$ is separable, $Q$ is diagonal and the two conditions are equivalent. Let $c_i$ denote the coordinate-wise Lipschitz constants. In the general case by \cite[Lemma 2]{nesterov2012efficiency} the full-dimensional Lipschitz condition~\eqref{eq:Lipz} holds with $Q=\sqrt{m}\blkdiag(c_1I,\ldots,c_mI)$. However,  this is a conservative metric in many cases, such as the separable case.
  	\end{rem} \fi 

We proceed to present the new primal-dual scheme \Cref{Alg:synch-1}. The motivation behind the name becomes apparent in the sequel after equation \eqref{eq:op1}. 
The algorithm involves two proximal evaluations (respective to the non-smooth terms $g,h$), and one gradient evaluation (for the Lipschitz-differentiable term $f$). The stepsizes in \Cref{Alg:synch-1} are chosen so as to satisfy the following assumption: 
  \begin{ass}[Stepsize selection]\label{ass:2}\quad 
    % \begin{enumerate}
 Both the dual stepsize matrix $\Sigma\in\R^{r\times r}$, and the primal stepsize matrix $\Gamma\in\R^{n\times n}$  
 are symmetric positive definite. In addition, they satisfy: 
      \begin{equation}\label{cond:synch}
    \Gamma^{-1}-{\tfrac{\beta_f}{2}}Q-L^\top \Sigma L\succ0.
    \end{equation} 
   \end{ass}
Selecting scalar primal and dual stepsizes, along with the standard definition of Lipschitz continuity, as is prevalent in the literature~\cite{vu2013splitting,condat2013primal}, can plainly be treated by setting $\Sigma=\sigma I_r$, $\Gamma = \gamma I_n$, and $Q=I_n$, whence from \eqref{cond:synch} we require that %it suffices to satisfy: 
    \begin{equation*}
        %\sigma \gamma \|L\|^2 < 1 - \tfrac{\gamma\beta_f}{2}. 
        \gamma < \frac{1}{\tfrac{\beta_f}{2}+ \sigma \|L\|^2}.
      \end{equation*} 
      \grayout{
      Specific rules for stepsize selection may be determined by taking into consideration the  application under study. In our limited experience we have found the following choices to perform well: 
      \begin{equation*} 
      \gamma = \frac{0.99}{\tfrac{\beta_f}{2}+ \alpha \|L\|},\quad  \sigma = \frac{\alpha}{\|L\|},
 \end{equation*}
with $\alpha = \tfrac{\beta_f}{\|L\|}$ when $\beta_f>\|L\|$, and $\alpha=1$ otherwise. The rationale being that when $\beta_f$ is larger than $\|L\|$ this choice allows larger $\sigma$ while having a smaller effect on $\gamma$; when $\beta_f$ is smaller that $\|L\|$ the two stepsizes are set to be almost equal.   }   
\begin{algorithm}[H]
      \caption{{\bf Tri}angularly Preconditioned {\bf P}rimal-{\bf D}ual algorithm (TriPD)}%
      \label{Alg:synch-1}%
      \begin{algorithmic}%[1]
      \algnotext{EndFor} 
        \item[]\textbf{Inputs:} $x^0\in\R^n$, $u^0\in\R^r$
        \For{$k=0,1,\ldots$}
        %\STATE Calculate $Tz^k$ according to~\eqref{eq:opT-nodelay} %where         ${z}^k=({{x}}^k,{{u}}^k)$
        \State $\bar{u}^k  =\prox_{h^{*}}^{\Sigma^{-1}}(u^k+\Sigma L{x^k})$
        \State  ${x}^{k+1}  =\prox_{g}^{\Gamma^{-1}}(x^k-\Gamma\nabla f(x^k)-\Gamma L^{\top}\bar{u}^k)$
        \State $u^{k+1}=\bar{u}^k+\Sigma L({x}^{k+1}-x^k)$
        % \State $x^{k+1}= x^k + \Lambda_p(\bar{x}^k-x^k)$ 
        %\STATE $z^{k+1}= z^k + {\Lambda} (Tz^k-{{z}^k})$
        \EndFor
      \end{algorithmic}
    \end{algorithm}  
     \begin{rem}\label{rem:2products}
    %Note that (pre-)multiplication with the block-diagonal matrices $\Sigma,\Gamma$ are very efficient
    Each iteration of \Cref{Alg:synch-1} requires one application of $L$ and one of $L^\top$ (even though it appears to require two applications of $L$). 
    The reason is that, at iteration $k$, only $L^\top \bar{u}^k$, $Lx^{k+1}$ need to be evaluated since $L(x^{k+1}-x^k)=Lx^{k+1}-Lx^k$ and $Lx^k$ was computed during the previous iteration. 
  \end{rem}
\Cref{Alg:synch-1} can be compactly written as:
  \begin{equation*}%\label{eq:fixedpointT}
      z^{k+1} = Tz^k,
  \end{equation*}
  where $z^k\coloneqq(u^k,x^k)$, and  the operator $T$ is given by:
  % \begin{subequations} \label{eq:opT-nodelay}
  %   \begin{align}
  %     Tz & =(\bar{u}+\Sigma L(\bar{x}-x),\bar{x}).\label{eq:Tz} \\
  %     \shortintertext{where}
  %   \bar{u} & =\prox_{h^{*}}^{\Sigma^{-1}}(u+\Sigma L{x})\\
  %   \bar{x} & =\prox_{g}^{\Gamma^{-1}}(x-\Gamma\nabla f(x)-\Gamma L^{\top}\bar{u})
  %   \end{align}
  % \end{subequations}
  \begin{subequations} \label{eq:opT-nodelay}
    \begin{align}
    \bar{u} & =\prox_{h^{*}}^{\Sigma^{-1}}(u+\Sigma L{x})\\
    \bar{x} & =\prox_{g}^{\Gamma^{-1}}(x-\Gamma\nabla f(x)-\Gamma L^{\top}\bar{u})\\
    Tz & =(\bar{u}+\Sigma L(\bar{x}-x),\bar{x}).\label{eq:Tz}
    \end{align}
  \end{subequations}
  % where $z=({u},{x})$. 
  \begin{rem}[Relaxed iterations]\label{rem:Lambda}
    It is also possible to devise a \emph{relaxed} version of \Cref{Alg:synch-1} as follows:
  $$
  z^{k+1} = z^k+ \Lambda(Tz^k-z^k),
  $$
  where $\Lambda$ is a positive definite matrix and $\Lambda\prec 2I_{n+r}$. 
  %Such relaxation can be used to provide added flexibility in selecting parameters for ameliorating the convergence rate. 
   For ease of exposition, we present the convergence analysis for the original version  (\ie for $\Lambda=I_{n+r}$). Note that the analysis carries through with minor modifications for relaxed iterations.  
%  For ease of exposition we present the algorithm, the subsequent convergence analysis and the block-coordinate variant in \Cref{Alg:BC} with $\Lambda=\id$ for clarity of exposition and with the understanding that relaxed iterate may also be used with minor modifications to the analysis. 
  \end{rem} 
 % We define the following operators for compactness of exposition. 
For compactness of exposition, we define the following operators:
  \begin{subequations} \label{monoper}
  	\begin{align}
  	A&:({u},{x})\mapsto(\partial {{h}}^*({u}),\partial {g}({x})),\label{eq:Aopt}\\
  	M&:({u},{x})\mapsto(-{L}x,{L}^\top{u}),\label{eq:Mopt}\\
  	C&:({u},{x})\mapsto(0,\nabla {f}({x})).\label{eq:Copt}
  	\end{align}
  \end{subequations}
  The optimality condition~\eqref{eq:primal-dual} can then be written in the equivalent form of the \emph{monotone inclusion}:
  \begin{equation} \label{eq:inclusion}
  0\in  Az+Mz+Cz \eqqcolon Fz ,
  \end{equation}
  where $z=({u},{x})$. Observe that the linear operator $M$ is monotone since it is skew-symmetric, \ie, $M^\top=-M$. It is also easy to verify that the operator $A$ is maximally monotone \cite[Thm. 21.2 and Prop. 20.23]{bauschke2011convex}, while operator $C$ is cocoercive,  being the gradient of $\tilde{f}(u,x)=f(x)$, and in light of \Cref{ass:1-3} and \cite[Thm. 18.16]{bauschke2011convex}.  

We further define 
    \begin{equation} \label{eq:PD-dg}
  P=\begin{pmatrix}
  \Sigma^{-1} & \frac{1}{2}{L}\\
  \frac{1}{2}{L}^\top & \Gamma^{-1}
  \end{pmatrix}, \quad 
  K=\begin{pmatrix}
  0 & -\tfrac{1}{2}{L}\\
  \tfrac{1}{2}{L}^\top & 0
  \end{pmatrix},
  \end{equation}
  and set $H=P+K$. It is plain to check that condition \eqref{cond:synch} implies that the symmetric matrix $P$ is positive definite (by a standard Schur complement argument). In addition, we set
  \begin{equation}\label{S:blkdiag}
  S=\blkdiag(\Sigma^{-1},\Gamma
  ^{-1}). 
  \end{equation} 
Using these definitions, the operator $T$ defined in \eqref{eq:opT-nodelay} can be written as:
 \begin{equation}\label{eq:op2}
Tz  \coloneqq z+S^{-1}(H+M^{\top})(\bar{z}-z),
 \end{equation}
 where 
 \begin{equation}
    \bar{z}  =(H+A)^{-1}(H-M-C)z.\label{eq:op1}
 \end{equation}
 This compact representation simplifies the convergence analysis.  
A key consideration for choosing $P$ and $K$ as in \eqref{eq:PD-dg} is to ensure that $H=P+K$ is lower block-triangular.  Notice that when $M\equiv0$, \eqref{eq:op2} can be viewed as a  \emph{triangularly preconditioned} forward-backward update, followed by a correction step.
This motivates the name \emph{TriPD}: {\bf Tri}angularly Preconditioned {\bf P}rimal-{\bf D}ual algorithm. Due to the triangular structure of $H$, the backward step $(H+A)^{-1}$ in \eqref{eq:op1} can be carried out sequentially: an updated dual vector $\bar{u}$ is computed (through proximal mapping) using $(u,x)$ and, subsequently, the primal vector $\bar{x}$ is  computed using $\bar{u}$ and $x$, \textit{cf.}~\eqref{eq:opT-nodelay}. %(see also \cite[Lem. 3.1]{Latafat2017}).  %Furthermore, it follows from~\eqref{eq:op2} that this selection results in $H+M^\top$ being upper block triangular which \nf{establishes the connection with Gauss-Seidel, and additionally} allows to take $S$ block diagonal while maintaining simple iterations. 
Furthermore, it follows from~\eqref{eq:op2} that this choice makes $H+M^\top$ upper block-triangular which, alongside the diagonal structure of $S$, yields the efficiently computable update \eqref{eq:Tz} in view of:
  \begin{equation}\label{eq:HplusM}
  S^{-1}(H+M^\top)=\begin{pmatrix}
  I & \Sigma{L} \\
  0 & I 
  \end{pmatrix}.
  \end{equation}
   \begin{rem}\label{rem:AFBA}
  % Notice that when $M\equiv0$ \eqref{eq:op2} can be viewed as an asymmetrically preconditioned forward-backward update, followed by a correction step.
The operator in~\eqref{eq:op2} is inspired from \cite[Alg. 1]{Latafat2017}, where operators of this form were introduced for devising a splitting method for solving general monotone inclusions of the form in \eqref{eq:inclusion}. We note, in passing, that the aforementioned algorithm entails 
an additional dynamic stepsize parameter ($\alpha_n$, therein). Although we may also adopt this here, for potentially improving the rate of convergence in practice, we opt not to: the reason is that in the context of  multi-agent optimization (that we especially target in this paper) such  design choice would require global coordination, that is contradictory to our objective of devising distributed algorithms. As a positive side-effect,  the convergence analysis is greatly simplified  compared to \cite[Sec. 5]{Latafat2017}. 
Besides, we use stepsize matrices (in place of scalar stepsizes) in \Cref{Alg:synch-1} along with the general Lipschitz continuity property (\cf  \Cref{ass:1-3}) as an essential means for avoiding conservative stepsizes, which is especially important for large-scale distributed optimization. 
 % Note that a similar approach can be taken based on the splitting in \cite[Alg. 1]{Latafat2017} for a wide range of  primal-dual algorithms; we further discuss this useful extension in \Cref{subsec:VuCondat}.
\end{rem}
We proceed by showing that the set of primal-dual solutions coincides with the set of fixed points of $T$, $\fix T$: 
  % $z\in\mathcal{S}$ if and only if $z=Tz$, \ie, 
  \begin{equation} \label{eq:SfixT}
  \mathcal{S}=\{z\mid 0 \in Az + Mz +Cz\} = \fix T.
  \end{equation}  
To see this note that from~\eqref{eq:op2} and \eqref{eq:op1} we have: % the series of equivalences: 
\begin{align*}
  z\in\fix T \iff & z=Tz \iff \bar{z}=z\nonumber \\
  \iff & (H+A)^{-1}(H-M-C)z = z \\ 
  \iff &  Hz -Mz-Cz \in Hz+Az  \iff z\in\mathcal{S}, 
\end{align*}
where in the second equivalence we used the fact that $S$ is positive definite and 
$\langle(H+M^\top)z,z \rangle \geq \|z\|^2_P$ for all $z\in\R^{n+r}$ (since $K$ is skew-adjoint and $M$ is monotone). 
% where in the second equivalence we used the fact that $S$ is positive definite and $H+M^\top$ is invertible since $P$ is positive definite, $K$ is skew adjoint, and $M$ is a monotone operator. }

% $\langle(H+M^\top)z,z \rangle \geq \|z\|^2_P$ for all $z\in\R^{n+r}$ (since $K$ is skew-adjoint and $M$ is monotone).  \nf{I recommend writing as follows: $H+M^\top$ is invertible since $P$ is positive definite, $K$ is skew adjoint, and $M$ is a monotone operator}}

  %    \nfc{Perhaps remove, except last sentence.}
  %  Iterations involving operators of the form~\eqref{eq:op2} were introduced in \cite[Algorithm 1]{Latafat2017} for solving the general monotone inclusion problem~\eqref{eq:inclusion}. It is important to note that Algorithm 1 of \cite{Latafat2017} includes another parameter (namely $\alpha$) which multiplies the last term in~\eqref{eq:op2}, and changes at each iteration based on $\bar{z}-z$. %(refer to the aforementioned algorithm).  Here, we have set this parameter equal to $1$ at all iterations: in addition to yielding a simpler algorithm, this modification is crucial in developing distributed variants of our algorithm for multi-agent optimization (where  $\bar{z}-z$ is not available locally). %In addition to this modification, our new primal-dual algorithm is derived by 
  %Another key design choice is the appropriate selection of $P,K,S$. 
  % The matrix $S$ in~\eqref{eq:op2} is another design parameter and is symmetric positive definite. 

  %A more general $S$ was considered in~\cite[Section 5]{Latafat2017} for which \eqref{S:blkdiag} is a special case.
  
    Next, let us define 
    \begin{equation} \label{eq:tildeP}
    \tilde{P}\coloneqq\begin{pmatrix}
    \Sigma^{-1} & -\frac{1}{2}{L} \\
    -\frac{1}{2}{L}^\top & \Gamma^{-1}-\tfrac{\beta_f}{4}Q 
    \end{pmatrix}.
    \end{equation}
Observe that (from Schur complement) \Cref{ass:2} is necessary and sufficient for $2\tilde{P}-S$ to be symmetric positive definite (\cf to the convergence result in \cref{thm:synch}). In particular, $\tilde{P}$ is positive definite since $S$ is positive definite.  
% \hl{  Observe that the symmetric matrix $\tilde{P}$ is positive definite if \Cref{ass:2} is satisfied (%the quantity in \eqref{cond:synch} is the Schur complement of $\tilde{P}$).}  
% 	%Since $\Sigma,\Gamma\succ0$, 
% 	a necessary and sufficient condition is that the Schur complement 
% 	 $\Gamma^{-1}-{\tfrac{\beta_f}{4}}Q-\tfrac{1}{4}L^\top \Sigma L\succ0$, which clearly holds when \eqref{cond:synch} does).} 

 The next lemma establishes the key property of the operator $T$ that is instrumental in our convergence analysis: 
 \begin{lem}\label{lem:nablaf-dg}
    Let \Cref{ass:1,ass:2} hold. Consider the operator $T$ in~\eqref{eq:opT-nodelay} (equivalently \eqref{eq:op2}).  %Suppose that $\Sigma$ and $\Gamma$ are such that
    Then for any $z^\star\in \mathcal{S}$ and any $z\in\R^{n+r}$ we have
    \begin{equation}\label{eq:main-innerprod}
    \|Tz-{z}\|_{\tilde{P}}^2 \leq \langle {z}-z^\star, {z}- Tz\rangle_S.
    \end{equation} 
  \end{lem}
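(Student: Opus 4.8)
\emph{Proof plan.} The strategy is to read off subgradient inclusions for $A$ from the resolvent definition of $\bar z$, apply monotonicity of $A$, and then absorb the gradient term via the cocoercivity of $C$. Writing $\bar z=(H+A)^{-1}(H-M-C)z$ as in \eqref{eq:op1} is equivalent to $H(z-\bar z)-Mz-Cz\in A\bar z$. Since $z^\star\in\mathcal S=\fix T$, the chain in \eqref{eq:SfixT} gives $\bar z^\star=z^\star$ and hence $-Mz^\star-Cz^\star\in Az^\star$. Monotonicity of $A$ applied to these two points yields
\[
\langle \bar z-z^\star,\;H(z-\bar z)-M(z-z^\star)-(Cz-Cz^\star)\rangle\ge 0.
\]

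For the bookkeeping I set $e:=\bar z-z$ and $p:=z-z^\star$, so that $\bar z-z^\star=e+p$ and $z-\bar z=-e$. Since $M$ is skew-adjoint, $\langle p,Mp\rangle=0$, and the above rearranges to
\[
-\langle p,He\rangle\;\ge\;\langle e,He\rangle+\langle e,Mp\rangle+\langle e+p,\;Cz-Cz^\star\rangle.
\]
Next I rewrite the right-hand side of the claim. From \eqref{eq:op2}, $z-Tz=S^{-1}(H+M^\top)(z-\bar z)$, so $\langle z-z^\star,z-Tz\rangle_S=\langle p,(H+M^\top)(-e)\rangle$. Because $M=2K$ one has $H+M^\top=P-K$, and using skew-adjointness of $M$ again this equals $-\langle p,He\rangle-\langle e,Mp\rangle$. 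Substituting the monotonicity bound cancels the $\langle e,Mp\rangle$ terms and, since $\langle e,He\rangle=\|e\|_P^2$ ($K$ skew-adjoint), collapses everything to
\[
\langle z-z^\star,z-Tz\rangle_S\;\ge\;\|e\|_P^2+\langle \bar x-x^\star,\;\nabla f(x)-\nabla f(x^\star)\rangle,
\]
where the last inner product comes from the block structure $C:(u,x)\mapsto(0,\nabla f(x))$.

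It remains to match this against $\|Tz-z\|_{\tilde P}^2$. A direct computation with \eqref{eq:HplusM}, i.e. $Tz-z=(\bar u-u+\Sigma L(\bar x-x),\,\bar x-x)$, gives the metric identity
\[
\|Tz-z\|_{\tilde P}^2=\|e\|_P^2-\tfrac{\beta_f}{4}\|\bar x-x\|_Q^2,
\]
so the claim reduces to proving $\langle \bar x-x^\star,\nabla f(x)-\nabla f(x^\star)\rangle\ge-\tfrac{\beta_f}{4}\|\bar x-x\|_Q^2$. This is the crux. Splitting $\bar x-x^\star=(\bar x-x)+(x-x^\star)$, the term $\langle x-x^\star,\nabla f(x)-\nabla f(x^\star)\rangle$ is bounded below by $\tfrac{1}{\beta_f}\|\nabla f(x)-\nabla f(x^\star)\|_{Q^{-1}}^2$ via the $Q$-metric cocoercivity of $\nabla f$ (the cocoercivity of $C$ noted after \eqref{eq:inclusion}, equivalently Baillon--Haddad after the substitution $x\mapsto Q^{1/2}x$), while the cross term $\langle \bar x-x,\nabla f(x)-\nabla f(x^\star)\rangle$ is bounded below, by \eqref{eq:Fenchel-Young} with $V=\tfrac{\beta_f}{2}Q$, by $-\tfrac{\beta_f}{4}\|\bar x-x\|_Q^2-\tfrac{1}{\beta_f}\|\nabla f(x)-\nabla f(x^\star)\|_{Q^{-1}}^2$. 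The two $\tfrac1{\beta_f}\|\cdot\|_{Q^{-1}}^2$ contributions cancel, leaving exactly the required bound.

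The main obstacle is this last step: the weight in \eqref{eq:Fenchel-Young} must be calibrated so that the stray gradient-norm term generated by the cross product is cancelled \emph{precisely} by the cocoercivity surplus, and simultaneously one must verify the metric identity that turns the $\tilde P$-norm of the residual into $\|\bar z-z\|_P^2$ minus the $\tfrac{\beta_f}{4}Q$ correction. Everything else is routine once $M=2K$ and the skew-adjointness of $K$ and $M$ are used to eliminate the cross terms.
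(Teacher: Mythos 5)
Your proof is correct and follows essentially the same route as the paper's: monotonicity of $A$ at $\bar z$ and $z^\star$, skew-adjointness of $K$ and $M$ to reach $\langle z-z^\star, z-Tz\rangle_S \geq \|\bar z - z\|_P^2 + \langle \bar x - x^\star, \nabla f(x)-\nabla f(x^\star)\rangle$, the metric identity $\|Tz-z\|_{\tilde P}^2 = \|\bar z - z\|_P^2 - \tfrac{\beta_f}{4}\|\bar x - x\|_Q^2$, and the identical cocoercivity-plus-Fenchel--Young cancellation (your choice $V=\tfrac{\beta_f}{2}Q$ is the paper's $V=\tfrac{2}{\beta_f}Q^{-1}$ with the arguments swapped). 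The only omission is the degenerate case $\beta_f=0$, which the paper dispatches in one line and which is trivial in your formulation as well, since $\nabla f$ is then constant and your reduced inequality reads $0\geq 0$.
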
 
  \begin{proof}
		See \Cref{proof:lem:nablaf-dg}. 
   	\end{proof}
% \nf{I strongly recommend only having the main theorem; the lemma can be used in the appendix as an intermediate step in the proof.} \textcolor{orange}{I understand your point, but this lemma presents a central property of the operator $T$ and is used also in the next section. I think it makes the paper more readable. If $\tilde{P}$ is replaced with $S$ then it is basically saying that the residue $\id - T$ is cocoercive with one point being $z^\star$. }

The next theorem establishes the main convergence result for \Cref{Alg:synch-1}. In specific, it is shown that the generated sequence is $S$-Fej\'er monotone. %with respect to the set of primal-dual solutions in the space equipped with $\langle{}\cdot{},{}\cdot{}\rangle_S$.
  We emphasize that the diagonal structure of $S$ is the key property used in developing the block-coordinate version of the algorithm in \Cref{subsec:CD}. %,  subsequently exploited in~\Cref{sec:DistOpt}. % to develop a  randomized distributed primal-dual method for multi-agent optimization.  
  \begin{thm} \label{thm:synch}
  	Let \Cref{ass:1,ass:2} hold.  Consider the sequence $\seq{{z}^k}$ generated by~\Cref{Alg:synch-1}. 
  	The following Fej\'er-type inequality holds for all $z^\star\in\mathcal{S}$:
  \begin{equation}
    \|z^{k+1}-z^\star\|_{S}^2 {}\leq{}\|z^k-z^\star\|_{S}^2 -  \|z^{k+1}-z^k\|_{2{\tilde{P}}-S}^2\label{Fej:synch2}.
    \end{equation}
Consequently, $\seq{{z}^k}$ converges to some $z^\star\in\mathcal{S}$. 
  \end{thm}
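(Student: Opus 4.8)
The plan is to deduce the Fej\'er-type inequality \eqref{Fej:synch2} directly from the key contraction estimate \eqref{eq:main-innerprod} of \Cref{lem:nablaf-dg}, and then harvest convergence from the resulting summability. First I would set $z=z^k$ and $z^\star\in\mathcal{S}$ in \eqref{eq:main-innerprod}; since $z^{k+1}=Tz^k$, the right-hand side becomes $\langle z^k-z^\star,\,z^k-z^{k+1}\rangle_S$ and the left-hand side becomes $\|z^{k+1}-z^k\|_{\tilde P}^2$. The whole computation is then a polarization identity in the inner product $\langle\cdot,\cdot\rangle_S$: writing $z^k-z^\star = (z^{k+1}-z^\star) + (z^k-z^{k+1})$, one expands
\begin{equation*}
\langle z^k-z^\star,\,z^k-z^{k+1}\rangle_S = \tfrac12\|z^k-z^\star\|_S^2 - \tfrac12\|z^{k+1}-z^\star\|_S^2 + \tfrac12\|z^k-z^{k+1}\|_S^2.
\end{equation*}
Combining this with the left-hand side $\|z^{k+1}-z^k\|_{\tilde P}^2$ and rearranging yields exactly
\begin{equation*}
\|z^{k+1}-z^\star\|_S^2 \leq \|z^k-z^\star\|_S^2 - \|z^{k+1}-z^k\|_{2\tilde P - S}^2,
\end{equation*}
which is \eqref{Fej:synch2}. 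This is essentially routine algebra once the lemma is invoked, so it should not be the bottleneck.

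The substantive part is deducing convergence of $\seq{z^k}$ to a single point of $\mathcal{S}$. Here I would invoke the fact, noted just before \eqref{eq:tildeP}, that \Cref{ass:2} makes $2\tilde P - S$ symmetric positive definite, so \eqref{Fej:synch2} shows that $\seq{z^k}$ is $S$-Fej\'er monotone with respect to $\mathcal{S}$ (and $\mathcal{S}\neq\emptyset$ by \Cref{ass:1-4} and \eqref{eq:SfixT}). Standard Fej\'er-monotonicity machinery (\cite{bauschke2011convex}) then gives: (i) $\seq{\|z^k-z^\star\|_S}$ is nonincreasing, hence $\seq{z^k}$ is bounded; and (ii) telescoping \eqref{Fej:synch2} over $k$ gives $\sum_k \|z^{k+1}-z^k\|_{2\tilde P-S}^2 < \infty$, so in particular $z^{k+1}-z^k \to 0$.

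To promote boundedness plus asymptotic regularity into convergence to a point of $\mathcal{S}$, I would apply the Opial-type criterion for Fej\'er monotone sequences: it suffices to show that every sequential cluster point of $\seq{z^k}$ lies in $\mathcal{S}=\fix T$. Take a subsequence $z^{k_j}\to \bar z$; since $z^{k_j+1}-z^{k_j}\to 0$ we have $Tz^{k_j}-z^{k_j}\to 0$, so $\bar z$ is a fixed point of $T$ provided $T$ is continuous (equivalently $\id - T$ is closed at $0$), which follows from continuity of $\nabla f$, $\prox_g$, and $\prox_{h^*}$ in \eqref{eq:opT-nodelay}. Hence $\bar z\in\fix T=\mathcal{S}$ by \eqref{eq:SfixT}, and the Fej\'er-monotonicity theorem yields convergence of the full sequence $\seq{z^k}$ to some $z^\star\in\mathcal{S}$. \emph{The main obstacle} I anticipate is this last step: verifying that cluster points are genuinely fixed points of $T$ requires either a clean continuity/closedness argument for the operator $\id-T$ or a demiclosedness-principle-type argument, and one must take care that convergence is in the $\|\cdot\|_S$ geometry — though since $S\succ0$ this norm is equivalent to the Euclidean one in finite dimensions, so topological subtleties are benign here.
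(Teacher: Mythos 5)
Your proposal is correct and follows essentially the same route as the paper's proof: the Fej\'er-type inequality \eqref{Fej:synch2} is obtained by combining \Cref{lem:nablaf-dg} with the expansion of $\|z^{k+1}-z^\star\|_S^2$ (your polarization identity is the same algebra, merely rearranged), and convergence is then deduced exactly as in the paper via positive definiteness of $2\tilde{P}-S$, the resulting vanishing of $\|Tz^k-z^k\|$, continuity of $T$ to show cluster points lie in $\fix T=\mathcal{S}$, and the standard Fej\'er-monotonicity theorem \cite[Thm. 5.5]{bauschke2011convex}. No gaps.
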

    \begin{proof}
		See \Cref{proof:thm:synch}. 
   	\end{proof}
% \nf{Need to comment after the theorem that the second term defines a norm in light of \Cref{ass:2}.}
% The proof can be found in \Cref{sec:appendix}.
% \nf{It will be great to have hyperlinks to the proofs of each lemma/theorem in the appendix (one easy way is to have separate appendices but maybe there is a better one)}.

% \textcolor{blue}{should I replace $Tz^k$ with $z^{k+1}$ then?}
		
		\subsection{Related Primal-Dual Algorithms}\label{subsec:VuCondat} 
		%!TEX root = ../../paper.tex
Recently, the design of primal-dual algorithms for solving  problem~\eqref{eq:equiv-prob} (possibly with  $f\equiv 0$ or $g\equiv0$) has received a lot of attention in the literature.  
% The popularity of this class of algorithms is due to the fact that they involve the proximal mappings of $g$ and $h$, the gradient of $f$ and the linear mapping $L$ and its adjoint are evaluated separately without the need to invert matrices or to solve inner loops. 
%The convergence analysis techniques for primal-dual algorithms in the literature are varied. 
Most of the existing approaches can be interpreted as  applications of one of the three main splittings techniques: forward-backward (FB), Douglas-Rachford (DR), and forward-backward-forward (FBF) splittings \cite{vu2013splitting,condat2013primal,briceno2011monotone+,combettes2012primal}, while others employ different tools to establish convergence  \cite{chambolle2011first,drori2015simple}.

A unifying analysis for primal-dual algorithms is proposed in \cite[Sec. 5]{Latafat2017}, where in place of FBS, DRS, or FBFS,  
a new three-term splitting, namely \emph{asymmetric forward-backward adjoint} (AFBA) is used to design primal-dual algorithms. In particular,  the algorithms of \cite{combettes2012primal,briceno2011monotone+,chambolle2011first,vu2013splitting,condat2013primal,drori2015simple} are recovered (under less restrictive stepsize conditions) and other new primal-dual algorithms are proposed. 
As discussed in \Cref{rem:AFBA} the  AFBA splitting \cite[Alg. 1]{Latafat2017} is the motivation behind the operator $T$ defined in~\eqref{eq:op2}. 
% We refer the reader to \cite[Sec. 5, Fig. 1]{Latafat2017} for a detailed discussion on the relation between primal-dual algorithms.
We refer the reader to \cite[Sec. 5]{Latafat2017} and \cite{Latafat2018chapter} for a detailed discussion on the relation between primal-dual algorithms.

 % As discussed in \Cref{rem:AFBA} the  AFBA splitting \cite[Alg. 1]{Latafat2017} is the motivation behind the operator $T$ defined in~\eqref{eq:op2}. 
 % Leveraging on this general framework, for all the aforementioned algorithms a Fej\'er-type inequality of the form in \Cref{thm:synch} may be derived with different $\tilde{P}$ and $S$. 

Next we briefly discuss how the celebrated algorithm of V\~u and Condat \cite{condat2013primal,vu2013splitting} can be seen as fixed-point iterations of the operator $T$ in \eqref{eq:op2} for an appropriate selection of $S$, $P$, $K$.

  In \cite{condat2013primal} Condat considers  problem~\eqref{eq:equiv-prob}, while V\~u \cite{vu2013splitting} considers the following variant: 
 \begin{equation} \label{prob:gen}
   \underset{x\in\R^n}{\minimize}\ {f}({x})+{g}({x})+(h\oblong l)({L}{x}),
      \end{equation}
%\ie, the nonsmooth term $h$ is replaced by $h\oblong l$, 
where $l$ is a strongly convex  function and $\oblong$ represents the infimal convolution~\cite{bauschke2011convex}.  %\ie the problem becomes: \nf{better to write the formula for infimal convolution explicitly}.   
%   
%Here we include $l$ in the problem for the sake of comparison.
%
For this problem, %in addition to \Cref{ass:1}, we assume that
an additional assumption is that the conjugate of $l$ is continuously differentiable, and $\nabla l^*$ is $\beta_l$-Lipschitz continuous with respect to a metric $G\succ0$, for some $\beta_l\ge 0$, \cf \eqref{eq:Lipz}. 
Note that it is possible to derive and analyze a variant of \Cref{Alg:synch-1} for \eqref{prob:gen}, however, we do not pursue this in this paper and focus on problem~\eqref{eq:equiv-prob} for clarity of exposition and length considerations. 
% Nevertheless, we present the sufficient condition for convergence (as obtained by our omitted analysis) in \eqref{eq:Vu-Condat}. 

One can verify that the operator defining the fixed-point iterations in the V\~u-Condat algorithm is given by~\eqref{eq:op2} with $H=P+K$ and $S$ defined as follows: 
\begin{equation}\label{S:vu-condat}
S=\begin{pmatrix}
\Sigma^{-1} & {L}\\
{L}^\top & \Gamma^{-1}
\end{pmatrix},
	\end{equation}
	 \begin{equation*} %\label{eq:PD-Vu}
	 P=\begin{pmatrix}
	 \Sigma^{-1} & {L}\\
	 {L}^\top & \Gamma^{-1}
	 \end{pmatrix}, \quad 
	 K=\begin{pmatrix}
	 0 & -{L}\\
	 {L}^\top & 0
	 \end{pmatrix}.
	 \end{equation*} 
For such selection of $S$, $P$, $K$, it holds that $S^{-1}(H+M^\top)=I$, whence in proximal form, the operator defined in~\eqref{eq:op2} becomes:
	% \begin{subequations} \label{eq:T-Vu}
		\begin{align*}
		\bar{u} & =\prox_{h^{*}}^{\Sigma^{-1}}(u-\Sigma\nabla l^*(u)+\Sigma L{x})\\
		\bar{x} & =\prox_{g}^{\Gamma^{-1}}(x-\Gamma\nabla f(x)-\Gamma L^{\top}(2\bar{u}-u))\\
		Tz & =(\bar{u},\bar{x}).
		\end{align*}
	% \end{subequations} 
	Observe the non-diagonal structure of $S$ for the V\~u-Condat algorithm in \eqref{S:vu-condat}, in contrast with the one for \Cref{Alg:synch-1} in \eqref{S:blkdiag}. %(contrast~\eqref{S:blkdiag} and 
	%The analysis \nf{follows the same lines as} in \Cref{subsec:NewPD}. %and uses the operator $T$ as in\nf{~\eqref{eq:op2},~\eqref{eq:op1}} but with 
	% The algorithm is then defined as iterative applications of $T$ as in \eqref{eq:fixedpointT}.
For the sake of comparison with \cite{condat2013primal,vu2013splitting} we consider the relaxed iteration $z^{k+1}=z^k+\lambda (Tz^k-z^k)$ for some $\lambda\in(0,2)$, in this subsection (which we opted to exclude from \Cref{Alg:synch-1} solely for the purpose of simplicity). 
% When $\Sigma$ and $\Gamma$ are scalar the algorithm of \cite{vu2013splitting} is recovered; if in addition $l=\delta_{\{0\}}$, the infimal convolution $h\oblong l=h$, $l^*\equiv0$ and the algorithm reduces to \cite[Alg. 3.2]{condat2013primal} (where in both cases a fixed relaxation parameter $\lambda$ is used). 
% When $\Sigma$ and $\Gamma$ are replaced with scalars $\sigma,\gamma$, the algorithm of \cite{vu2013splitting,condat2013primal} is precisely recovered.  % similarly,   %additionally 
%when $l=\delta_{\{0\}}$ %, the infimal convolution $h\oblong l=h$, $l^*\equiv0$ and 
%problem~\eqref{prob:gen} the algorithm reduces to \cite[Alg. 3.2]{condat2013primal} (where in both cases a fixed relaxation parameter $\lambda$ is used).  
	% In \cite{condat2013primal,vu2013splitting} relaxed iterates are considered. Here we have avoided this for simplicity. 

The analysis in \Cref{thm:synch} can be further used to establish convergence of the V\~u-Condat scheme for problem~\eqref{prob:gen} under the following sufficient conditions (in place of \Cref{ass:2}): 
	\begin{subequations}\label{eq:Vu-Condat}
	 		\begin{align}
			&\Sigma^{-1}-\tfrac{\beta_l}{2(2-\lambda)}G\succ0,\\
	 		&\Gamma^{-1}-\tfrac{\beta_f}{2(2-\lambda)}Q-L^{\top}\left(\Sigma^{-1}-\tfrac{\beta_l}{2(2-\lambda)}G\right)^{-1}L\succ0. 
	 		\end{align}
	\end{subequations}
Notice that when $l=\delta_{\{0\}}$ (\ie for problem~\eqref{eq:equiv-prob}), $l^*\equiv0$ whence $\beta_l=0$, and the condition simplifies to:
   \begin{equation*}%\label{eq:Vu-Condat}
   \Gamma^{-1}-\tfrac{\beta_f}{2(2-\lambda)}Q-L^{\top}\Sigma L\succ0.
   \end{equation*} 
%   \nf{I deleted here the general condition for $\ell \ne \delta_{\{0\}}$, for simplicity.} 
%   \textcolor{orange}{I think it is better to keep it because we use it below in equation (27) for comparison. }
		% \begin{equation} \label{eq:Vu-Condat}
	 % 		 \Sigma^{-1}\succ\tfrac{\beta_l}{2}G, \; \; \Gamma^{-1}-\tfrac{\beta_f}{2}Q-L^{\top}\left(\Sigma^{-1}-\tfrac{\beta_l}{2}G\right)^{-1}L\succ0.
	 % 	\end{equation}
%Under condition~\eqref{eq:Vu-Condat}, the convergence of the algorithm is established by arguing similarly as in  \Cref{thm:synch}, where 
Given the stepsize condition \eqref{eq:Vu-Condat} the following Fej\'er-type inequality holds. 
  \begin{equation}
    \|z^{k+1}-z^\star\|_{S}^2 {}\leq{}\|z^k-z^\star\|_{S}^2 -  \lambda\|z^{k+1}-z^k\|_{2{\hat{P}}-\lambda S}^2,\label{Fej:synch3}
    \end{equation}
	  with $S$ defined in \eqref{S:vu-condat} and $\hat{P}$ given by:
		\begin{equation*} %\label{eq:tildeP-Vu}
 		\hat{P}\coloneqq\begin{pmatrix}
 		\Sigma^{-1}-\tfrac{\beta_l}{4}G & {L} \\
 		{L}^\top & \Gamma^{-1}-\tfrac{\beta_f}{4}Q 
 		\end{pmatrix}.
 		\end{equation*}

% The convergence analysis in this case is similar to that of \Cref{Alg:synch-1} but with $S$ as in \eqref{S:vu-condat} and $\tilde{P}$ in \eqref{eq:main-innerprod} replaced with
% 		\begin{equation*} %\label{eq:tildeP-Vu}
%  		\hat{P}\coloneqq\begin{pmatrix}
%  		\Sigma^{-1}-\tfrac{\beta_l}{4}G & {L} \\
%  		{L}^\top & \Gamma^{-1}-\tfrac{\beta_f}{4}Q 
%  		\end{pmatrix}.
%  		\end{equation*}
% Therefore, 
This {generalizes} the result in \cite[Thm. 3.1]{condat2013primal}, \cite[Cor. 4.2]{vu2013splitting} and \cite[Prop. 5.1]{Latafat2017} where $Q=I$ and the stepsizes are assumed to be scalar.

   %Our main goal here was to demonstrate the non-diagonal structure of $S$ for the V\~u-Condat algorithm. In the rest of this section 
% In the sequel, we highlight that our analysis leads to less conservative conditions as compared to  \cite{vu2013splitting,condat2013primal,combettes2014forward}. 
Our main goal here was to demonstrate the non-diagonal structure of $S$ for the V\~u-Condat algorithm. 
In the sequel, we highlight that our analysis additionally leads to less conservative conditions as compared to \cite{vu2013splitting,condat2013primal,combettes2014forward}. 
Notice that the proofs in the aforementioned papers are based on casting the algorithm in the form of forward-backward iterations. Consequently, the stepsize condition obtained ensures that the underlying operator is \emph{averaged}. In contradistinction, the sufficient condition in \eqref{eq:Vu-Condat} only  ensures that the Fej\'er-type inequality \eqref{Fej:synch3} holds, which is sufficient for convergence. 
% (refer to \cite[Rem. 3.1]{Latafat2017} for further discussion on this topic). %details regarding the properties of operator $T$ defined in \eqref{eq:op2} in general). 
Therefore, even in the case of scalar stepsizes (as in \cite{vu2013splitting,condat2013primal}) condition \eqref{eq:Vu-Condat} allows for larger stepsizes compared to \cite{vu2013splitting,condat2013primal}. % (see also \cite[Rem. 5.6]{Latafat2017}).} 
% \nf{It will be very useful to include 1 sentence on the main argument of \cite[Rem. 5.6]{Latafat2017} for completeness, to give the main intuition of why we get larger stepsizes using our approach.}\textcolor{orange}{What do you mean? The reason is that they ensure averagedness of the operator while we don't. We already explained this in the previous sentence. }

   % We emphasize that \cite[Thm. 3.1]{condat2013primal} uses a different proof technique and shows that the algorithm can be written as a forward-backward iterations. 
   %Modifying the proof of  \cite[Thm. 3.1]{condat2013primal} to account for the metric of Lipschitz continuity and stepsize matrices leads to \emph{more conservative} conditions. %This is true even in the scalar case when relaxation parameter (see \cite[Rem. 5.6]{Latafat2017}).
    % Notice that \cite[Thm. 3.1]{condat2013primal} uses a different proof technique by showing that the algorithm can be written as a forward-backward iterations. 
    % This leads to more conservative stepsize condition in presence of relaxation parameter.
   % 
	% In \cite{vu2013splitting,condat2013primal}, the metrics $Q$ and $G$ are the identity.

In \cite{combettes2014forward,pesquet2014class} the authors propose a variable metric version of the algorithm with a preconditioning that accounts for the general Lipschitz metric. This is accomplished by fixing the stepsize matrix to be a constant times the inverse of the Lipschitz metric, and obtaining a condition on the constant.   
Our approach does not assume this restrictive form for the stepsize matrix; %yields a wider range of parameters as the stepsizes are not limited to be proportional to the inverse of the Lipschitz metric, and can take 
 even when such a restriction is imposed it allows for \emph{larger} stepsizes, thus achieving generally faster convergence. As an illustrative example, let us set $\Gamma=\mu Q^{-1}$ and $\Sigma= \nu G^{-1}$ for some $\mu,\nu>0$. For simplicity and without loss of generality, let $\beta_l=1$, $\beta_f=1$.  %With such a special choice of $\Sigma$ and $\Gamma$, 
Then \eqref{eq:Vu-Condat} simplifies to: 
   \begin{equation}
   (\mu^{-1}-\tfrac{1}{2(2-\lambda)})(\nu^{-1}-\tfrac{1}{2(2-\lambda)})Q-L^{\top}G^{-1}L\succ0,  \label{eq:ourVu}
   \end{equation}
   whereas the condition required in \cite{combettes2014forward,pesquet2014class} is $\lambda\in(0,1]$ and  
   \begin{equation}\label{eq:theirVu}
   \mathtight{\frac{\delta}{1+\delta}>\frac{\max\{\mu,\nu\}}{2}\; \textrm{with } \; \delta=\tfrac{1}{\sqrt{\nu\mu}}\|G^{-1/2}LQ^{-1/2}\|^{-1}-1.}
   \end{equation}
   It is not difficult to check that condition,~\eqref{eq:ourVu}, is always less restrictive than~\eqref{eq:theirVu}.  For instance, let $G^{-1/2}LQ^{-1/2}=I$  and set $\mu=1.5$, then \eqref{eq:ourVu} requires that $\nu<\tfrac{1}{6.5}$ whereas \eqref{eq:theirVu} necessitates that $\nu<\tfrac{1}{24}$.

	    \section{A Randomized Block-Coordinate Algorithm}\label{subsec:CD}

In this section, we describe a randomized block-coordinate variant of \Cref{Alg:synch-1} and discuss important special cases pertaining to the randomized coordinate activation mechanism. The convergence analysis is based on establishing stochastic Fej\'er monotonicity \cite{combettes2015stochastic} of the generated sequence. In addition, we establish linear convergence of the method under further assumptions in  \Cref{sec:linConv}. % In particular, it is shown that when $f$, $g$, $h$ are piecewise linear-quadratic (a common scenario) the distance from the set of solutions converges $Q$-linearly in expectation to zero \textit{cf.} \Cref{thm:linearCon}.  

First, let us define a partitioning of the vector of primal-dual variables into $m$ blocks of coordinates. Notice that each block might include a subset of primal or  dual variables, or a combination of both. Respectively, let $U_i\in\R^{(n+r)\times(n+r)}$, for $i=1,\ldots,m$, be a diagonal matrix with $0$-$1$ diagonal entries that is used to select a subset of the coordinates (selected coordinates correspond to diagonal entries equal to $1$). We call such matrix an \emph{activation matrix}, %or \emph{selection matrix}
 as it is used to activate/select a subset of coordinates to update.

Let $\Phi=\{0,1\}^m$ denote the set of binary strings of length $m$ (with the elements considered as column vectors of dimension $m$).   
% At the $k$th iteration, the algorithm draws a $\Phi$-valued random activation vector $\epsilon^{k+1}=(\epsilon_i^{k+1})_{i=\{1,\ldots,m\}}$, which determines which \emph{blocks of coordinates}  will be updated.
At the $k$-th iteration, the algorithm draws a $\Phi$-valued random activation vector $\epsilon^{k+1}$ which determines which \emph{blocks of coordinates} will be updated. The $i$-th element of the vector $\epsilon^{k+1}$ is denoted as $\epsilon_i^{k+1}$:  the $i$-th block is updated at iteration $k$ if $\epsilon_i^{k+1}=1$. Notice that in general multiple blocks of coordinates may be concurrently updated.  %(since $\epsilon_i^k$ may have multiple entries equal to 1).  
The conditional expectation ${\E}\left[\cdot\mid\mathcal{F}_k\right]$ is abbreviated by ${\E}_k\left[\cdot\right]$, where $\mathcal{F}_k$ is the filtration generated by $(\epsilon^1,\ldots,\epsilon^k)$. The following assumption summarizes the setup of the randomized coordinate selection. 
\begin{ass}\label{ass:4}\quad 
	
	\begin{enumerate}
		\item\label{ass:4-1}%
		$\{U_{i}\}_{i=1}^m$ are $0$-$1$ diagonal matrices and 
		% \begin{equation*}
		\(
\sum_{i=1}^{m} U_{i} = I.%_{n+r}. %,\quad U_iU_j=0,\quad (\forall i\neq j)
		\)
% \end{equation*} 
		%Furthermore, $U_i$ satisfy $U_iU_j=0$ for all $i\neq j$.   
			\item\label{ass:4-2}%
			$\seq{\epsilon^k}$ is a sequence of i.i.d. $\Phi$-valued random vectors with 
			% distribution captured by a generic vector $\epsilon$, and 
			%$\Phi\subseteq 2^\mathcal{I}$ and 
			% $\epsilon^k=(\epsilon_i^k)_{i=\{1,\ldots,m\}}$ 
			% is an independent, identically distributed $\Phi$-valued random vector with $\epsilon^k=(\epsilon_i^k)_{i=\{1,\ldots,m\}}$ and %$\Phi\subseteq 2^\mathcal{I}$ and 
			%for each $i\in\mathcal{I}$  
			\begin{equation}\label{eq:probs}
				p_i\coloneqq\mathbb{P}(\epsilon^1_i=1)>0 \quad  i=1,\ldots,m.
			\end{equation}
			% Furthermore, $\epsilon^k, \epsilon^l$ are independent for $k\ne l$. 
		\item The stepsize matrices $\Sigma,\Gamma$ are diagonal. 
	\end{enumerate}
	\end{ass} 
The first condition implies that the activation matrices define a \emph{partition} of the coordinates, while the second that each partition is activated with a positive probability.

We further define the (diagonal) coordinate activation probability matrix $\Pi$ as follows:% and relaxation matrix  $\Lambda=\blkdiag(\Lambda_d,\Lambda_p)$ %as follows: 
\begin{equation}\label{eq:p}
\Pi\coloneqq\sum_{i=1}^{m}p_iU_{i}.
\end{equation}
For $\epsilon=(\epsilon_1,\ldots,\epsilon_m)$ we define the operator $\hat{T}^{(\epsilon)}$ by:
\begin{equation*}%\label{eq:Trand}
	\hat{T}^{(\epsilon)}z \coloneqq  z + \sum_{i=1}^m \epsilon_i U_i(Tz-z),
\end{equation*}
where $T$ was defined in~\eqref{eq:opT-nodelay} (equivalently~\eqref{eq:op2}). Observe that this is a compact notation for the update of only the selected blocks.  
The randomized scheme is then written as an iterative  application of $\hat{T}^{(\epsilon^{k+1})}$ for $k=0,1,\ldots$ (this operator updates the active blocks of coordinates and leaves the others unchanged, \ie equal to their previous iterate values). The randomized block-coordinate scheme is summarized below. %Notice that at each iteration only the blocks of coordinates that are active (according to $\epsilon^k$) are evaluated.  
 		\begin{algorithm}[H]
 			\caption{Block-coordinate TriPD algorithm}
 			\label{Alg:BC}
 			\begin{algorithmic}%[1] 
 			\algnotext{EndFor}
 				\item[]\textbf{Inputs:} $x^0\in\R^n$, $u^0\in\R^r$
 				\For{$k=0,1,\ldots$}
 				\State Select $\Phi$-valued r.v. $\epsilon^{k+1}$%=(\epsilon_i^{k+1})_{i=\{1,\ldots,m\}}$ 
 				%\STATE Calculate $T{{z}}^k$ according to~\eqref{eq:opT-nodelay}% where 				${z}^k=({\mathsf{u}}^k,{\mathsf{x}}^k)$
 				\State $z^{k+1}=\hat{T}^{(\epsilon^{k+1})}z^k$ %+ \sum_{i=1}^{m}\epsilon_i^k{\lambda_i} U_i(Tz^k-{{z}^k})$
 				\EndFor
 			\end{algorithmic}
 		\end{algorithm}  
\iffalse  	
\nf{The update equation in~\Cref{Alg:BC} can be equivalently written as:
\begin{equation}\label{eq:updBC}
z^{k+1}= z^k +  \Lambda E^k(Tz^k-{{z}^k}),
\end{equation}
 where $E^k = \sum_{i=1}^{m}\epsilon_i^k U_i$ is a diagonal 0-1 matrix, which indicates which coordinates (primal-dual variables) are updated at the $k$th iteration of the algorithm. %This equivalent representation obfuscates the particular coordinate partition $\mathcal{P}$. 
 Furthermore, it is plain to check that $E^k$ is symmetric and idempotent, i.e., $E^k = (E^k)^\top = (E^k)^2$, and that $\E[E^k] = \E[(E^k)^\top E^k] = \Pi$, where $\Pi$ defined in~\eqref{eq:p} is a diagonal matrix with $\pi_{i,i}$ corresponding to the probability of updating the $i$th coordinate at each iteration $k=1,2,\hdots$; $p_i>0$ is necessary and sufficient so that all coordinates are updated with positive probability. 
}\fi

We emphasize that the randomized model that we adopt here is capable of capturing many stationary randomized activation mechanisms.  %by assigning different probabilities to the events. 
To illustrate this, consider the following activation mechanisms (of specific interest in the realm of distributed multi-agent optimization, \cf \Cref{sec:DistOpt}):  
\begin{itemize}[leftmargin=*]
	\item \emph{Multiple coordinate activation}: %this model is  captured by the  in which, 
	at each iteration, the $j$-th  coordinate block is randomly  activated with probability $p_j>0$ independent of other coordinates blocks. % (cf. \Cref{ass:4-2}). 
	This corresponds to the case that the sample space %for the generic random vector $\epsilon$  
	is equal to $\Phi = \{0,1\}^m$. 
	The general distributed algorithm of \Cref{sec:DistOpt}  assumes this mechanism.   
	%\nf{This is misleading: the distributed algorithm assumes the model of \Cref{ass:4}.ii where each agent (correspondingly a block) is activated independent of others; under \Cref{ass:4}.i this is equivalent to what we mention here as special case. We can either skip it, or mention the special case that each block corresponds to a single coordinate; which was our intention here at the first place}. 
	\item \emph{Single coordinate activation}: at each iteration, one  coordinate block is  selected, \ie the sample space %for $\epsilon$
	 is %only events with nonzero probability are 
	{\mathtight\begin{equation}\label{eq:Phi2}
		\{(1,0,\ldots,0),(0,1,0,\ldots,0)\ldots,(0,\ldots,0,1)\}.
		\end{equation}}
	We assign probability $p_i$ to the event $\epsilon_i=1$ (and $\epsilon_j=0$ for $j\neq i$), whence the probabilities must satisfy $\sum_{i=1}^{m}p_i=1$. 
	% We note in passing that this has important implication for devising \emph{asynchronous} algorithms for distributed optimization, \cf \cite{RK,RK2}. 
	%\item \emph{Slotted protocols}~\cite{srikant2013communication} where  there is some type of clock synchronization so that  	protocols can be analyzed in \emph{discrete time}, where each iteration $k$ is carried out at a given time slot. This case  	$\Phi=\{0,1\}^m$, 
\end{itemize}
The next lemma establishes stochastic Fej\'er monotonicity for  the generated sequence, by directly exploiting the diagonal structure of $S$. The proof technique is adapted from \cite[Thm. 3]{bianchi2015coordinate} (see also \cite[Thm. 2]{iutzeler2013asynchronous}, \cite[Thm. 2.5]{combettes2015stochastic}), and is based on the  Robbins-Siegmund lemma \cite{robbins1985convergence}. 
% The proof is based on the  Robbins-Siegmund lemma \cite{robbins1985convergence}, and can be found in \Cref{sec:appendix}. 
 	\iffalse 
 	\begin{lem}[Robbins-Siegmund~\cite{robbins1985convergence}]
 		Suppose  $\seq{\xi^k}\in\ell_+^1({\mathscr{F}})$, $\seq{\mu^k}\in\ell_+^1({\mathscr{F}})$, $\seq{\alpha^k}\in \ell_+(\mathscr{F})$, and $\seq{v^k}\in \ell_+(\mathscr{F})$  such that 
 		\begin{equation}
 		{\E}_k\left[ \alpha^{k+1}\right] + v^k\leq (1+\xi^k)\alpha^k + \mu^k.
 		\end{equation}
 		Then $\seq{v^k}\in\ell_+^1({\mathscr{F}})$ and $\seq{\alpha^k}$ converges a.s. to a $[0,+\infty)$-valued random variable. 
 	\end{lem}\fi 

 	\begin{thm} \label{thm:Fejerlike-dg} 
 		Let \Cref{ass:1,ass:2,ass:4} hold. Consider the sequence $\seq{{z}^k}$ generated by \Cref{Alg:BC}. The following Fej\'er-type inequality holds  for all $z^\star\in\mathcal{S}$:
 		 \begin{align}
 			\mathbb{E}_{k}\left[\|z^{k+1}-z^{\star}\|_{\Pi^{-1}S}^{2}\right]{}\leq{}&\|z^{k}-z^{\star}\|_{\Pi^{-1}S}^{2}\nonumber\\&-\|Tz^{k}-z^{k}\|_{2\tilde{P}-S}^{2}. \label{eq:stoFej}
 			\end{align}
 			% and consequently $\seq{{z}^k}$ converges almost surely to an $\mathcal{S}$-valued random vector.
 			Consequently, $\seq{{z}^k}$ converges a.s. to some $z^\star\in\mathcal{S}$. 
 	\end{thm}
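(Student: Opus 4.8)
The plan is to compute the conditional expectation $\E_k[\|z^{k+1}-z^\star\|_{\Pi^{-1}S}^2]$ in closed form, systematically exploiting the fact that $S$, $\Pi$ and the activation matrices $U_i$ are all diagonal, hence mutually commuting. First I introduce the diagonal $0$-$1$ matrix $E^{k+1}\coloneqq\sum_{i=1}^m\epsilon_i^{k+1}U_i$, so that the update of \Cref{Alg:BC} reads compactly as $z^{k+1}=z^k+E^{k+1}(Tz^k-z^k)$. Writing $z^{k+1}-z^\star=(z^k-z^\star)+E^{k+1}(Tz^k-z^k)$ and expanding the squared $\Pi^{-1}S$-norm produces the constant term $\|z^k-z^\star\|_{\Pi^{-1}S}^2$ plus a cross term and a quadratic term, both depending on $E^{k+1}$.

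Next I take $\E_k[\cdot]$ using three elementary facts that follow from $\seq{\epsilon^k}$ being i.i.d. and independent of $\mathcal{F}_k$ (while $z^k$ is $\mathcal{F}_k$-measurable): the expectation identity $\E_k[E^{k+1}]=\sum_{i=1}^m p_iU_i=\Pi$; the symmetry and idempotency of the diagonal selector, $(E^{k+1})^\top=E^{k+1}=(E^{k+1})^2$; and the commutation of all diagonal factors, which in particular gives the cancellation $\Pi^{-1}S\,\Pi=S$. The quadratic term collapses to $\E_k[\|E^{k+1}(Tz^k-z^k)\|_{\Pi^{-1}S}^2]=\langle Tz^k-z^k,\Pi^{-1}S\,\E_k[E^{k+1}]\,(Tz^k-z^k)\rangle=\|Tz^k-z^k\|_S^2$, and the cross term to $2\langle z^k-z^\star,Tz^k-z^k\rangle_S$, yielding the identity
\[
\E_k[\|z^{k+1}-z^\star\|_{\Pi^{-1}S}^2]=\|z^k-z^\star\|_{\Pi^{-1}S}^2+2\langle z^k-z^\star,Tz^k-z^k\rangle_S+\|Tz^k-z^k\|_S^2.
\]

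The third step is to invoke \Cref{lem:nablaf-dg} with $z=z^k$ and the given $z^\star\in\mathcal{S}$: since $\langle z^k-z^\star,Tz^k-z^k\rangle_S=-\langle z^k-z^\star,z^k-Tz^k\rangle_S$, the lemma bounds the cross term above by $-2\|Tz^k-z^k\|_{\tilde P}^2$. Substituting this and regrouping the two $\|Tz^k-z^k\|^2$ contributions into the single metric $2\tilde P-S$ gives exactly the Fej\'er-type inequality \eqref{eq:stoFej}. For the convergence claim, I recall that \Cref{ass:2} makes $2\tilde P-S\succ0$, so the subtracted residual is nonnegative; applying the Robbins-Siegmund lemma with $\alpha^k=\|z^k-z^\star\|_{\Pi^{-1}S}^2$, $v^k=\|Tz^k-z^k\|_{2\tilde P-S}^2$ and $\xi^k=\mu^k=0$ shows that $\seq{\alpha^k}$ converges a.s. and $\sum_k v^k<\infty$ a.s., the latter forcing $Tz^k-z^k\to0$ a.s. Because $T$ is continuous and $\fix T=\mathcal{S}$ by \eqref{eq:SfixT}, every cluster point of $\seq{z^k}$ lies in $\mathcal{S}$ almost surely, and a.s. convergence to a single $\mathcal{S}$-valued random variable then follows from the stochastic quasi-Fej\'er convergence principle of \cite{bianchi2015coordinate,combettes2015stochastic}.

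The main obstacle is this last passage: inequality \eqref{eq:stoFej} only guarantees, for each \emph{fixed} $z^\star$, that the distance converges a.s. on some full-measure event that may depend on $z^\star$, whereas the conclusion requires convergence of the \emph{entire} sequence to one random limit. Reconciling these necessitates handling uncountably many $z^\star$ simultaneously, which is resolved in the standard way by restricting to a countable dense subset of $\mathcal{S}$ and exploiting continuity; this is precisely the content of the cited stochastic Fej\'er machinery, so I would invoke it rather than reprove it.
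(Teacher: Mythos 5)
Your proposal is correct and follows essentially the same route as the paper's proof: the same selector matrix $E^{k+1}=\sum_i\epsilon_i^{k+1}U_i$, the same use of $\E_k[E^{k+1}]=\Pi$ together with idempotency and commutation of the diagonal matrices to collapse the quadratic and cross terms, the same invocation of \Cref{lem:nablaf-dg} to obtain \eqref{eq:stoFej}, and the same appeal to Robbins--Siegmund and the stochastic Fej\'er machinery of \cite{bianchi2015coordinate,combettes2015stochastic} for a.s.\ convergence. Your explicit remark about handling uncountably many $z^\star$ via a countable dense subset is a careful spelling-out of exactly the step the paper delegates to those references, not a different argument.
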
  
 	   \begin{proof}
		See \Cref{proof:thm:Fejerlike-dg}. 
   	\end{proof}
	It is important to emphasize that a naive implementation of  \Cref{Alg:BC} (with regards to the partitioning of primal-dual variables) may involve wasteful computations. 
	 As an example, consider a BC  algorithm in which, at every iteration, either all primal or all dual variables are updated. In such a case, if at iteration $k$ the dual vector is to be updated, both $x^{k+1}$, $u^{k+1}$ are computed (\cf \cref{Alg:synch-1}), whereas only $u^{k+1}$ is updated. 
	 %Even though our convergence analysis (\cf \cref{thm:Fejerlike-dg}) holds for this case, this may not lead to computational saving. 
	 This phenomenon is common to all primal-dual algorithms, and is due to the fact that the primal and dual updates need to be performed sequentially in the full version of the algorithm. As a consequence, the blocks of coordinates must be partitioned in such a way that computations are not  discarded, so that the iteration cost of a BC algorithm  is (substantially) smaller than computing the full operator $T$.  This choice relies entirely on the structure of the optimization problem under consideration. A canonical example of prominent practical interest is the setting of multi-agent optimization in a network (\cf \cref{sec:DistOpt}), where 
	 $L$ is not diagonal, $f$ and $g$ are separable, and additional coupling between (primal) coordinates is present through $h$, see \eqref{eq:dist}. 
	 % In this example, the primal and dual coordinates are partitioned in blocks where each block is held by an agent. 
	 In this example, the primal and dual coordinates are partitioned in such a way that no computation is discarded (\cf \cref{sec:DistOpt} for more details). 
	 %The BC algorithm amounts to activating one  agents and updating the corresponding variables: computational economy is achieved since these updates are \emph{distributed}, \ie an agent can update its coordinate block locally leveraging information communicated exclusively by neighboring agents in the communication graph.  
%	
%	
%	each agent holds a subset of primal-dual coordinates, whence a 
%	which the coordinates are grouped in such a way that no wasteful computation is performed. In that application, 
%	 $L$ is not diagonal, $f$ and $g$ are separable, and additional coupling is present through $h$, see \eqref{eq:dist}. }

We proceed with another example where the coordinates may be grouped such that the BC algorithm does not incur any wasteful computations: consider problem~\eqref{eq:equiv-prob} with $Lx = \blkdiag(L_{1}x_1,\ldots,L_{m}x_m)$, and $g$, $h$ separable functions \ie
	 \begin{equation*} %\label{prob:exmgen}
   \underset{x\in\R^n}{\minimize}\ {f}({x}) + \sum_{i=1}^m \big( g_i(x_i) + h_i(L_{i}x_i)\big).
   \end{equation*}
   In this problem, the coupling between the (primal) coordinates %$\{x_i\}_{i=1}^m$ (note that $x_i$'s need not be scalar and are allowed to be vectors of uneven dimension)
    is carried via function $f$. For each $i=1,\ldots,m$, we can choose $U_i$ such that it selects the $i$-th primal-dual coordinate block $(u_i,x_i)$. Under such partitioning of coordinates, one may use \Cref{Alg:BC} with any random activation pattern satisfying \Cref{ass:4}. For example, for the case of multiple independently activated coordinates, %blocks with independent probabilities, 
   as discussed above, at iteration $k$ the following is performed
   \[
   \left\{\hspace*{-\algorithmicindent}~\!\!~
   \parbox{\linewidth}{%
    \begin{algorithmic}%[1]
      % \algnotext{EndFor} 
      %   % \item[]\textbf{Inputs:} $x^0\in\R^n$, $u^0\in\R^r$
      %   % \For{$k=0,1,\ldots$}
      %   % \State select active coordinates $i$ 
      %   \For {$i=1,\ldots,m$}
        \State \scalebox{0.75}{\textbullet} each block $(u_i,x_i)$ is activated %Select active $i$'s 
        with probability $p_i>0$ %(independent from one another)   
      %   \EndFor
        \State \scalebox{0.75}{\textbullet} for active block(s) $i$ compute:
        \State $\bar{u}^k_{i}  =\prox_{\sigma h_i^{*}}(u^k_{i}+\sigma L_{i}x^k_i)$
        \State  ${x}^{k+1}_i  =\prox_{\gamma g_i}(x^k_i-\gamma\nabla_i f(x^k)-\gamma L^{\top}_{i}\bar{u}^k_i)$
        \State $u^{k+1}_i=\bar{u}^k_i+\sigma L_{i}({x}^{k+1}_i-x^k_i)$.      
         % \State send $x_i^{k+1}$ to (all) $j$ if $\nabla_j f(x)$ depends on $x_i$   
      \end{algorithmic}}%
\right.\]
% Due to this particular structure of $L$, there are no overlapping computations in updating the separate blocks, hence the BC algorithm does not feature any wasteful computations. 
% % \hide{% redundant for the case of our example
% We have used the standard notation $\nabla_i f(x) \equiv \frac{\partial f (x)}{\partial x_i}$. 
More generally, when $g$ and $h$ are separable in problem \eqref{eq:equiv-prob}, and $L$ is such 
%that the coordinates can be grouped together, that is to say the sparsity pattern of $L$ must be such 
that either each (block) row only has one nonzero element or each (block) column has one nonzero element, then the coordinates can be grouped together in such a way that no wasteful computations occur: in the first case the primal vector $x_i$ and all dual vectors $u_j$ that are required for its computation are selected by $U_i$ (with the role of primal and dual reversed in the second case). 
% Due to the structure of $L$ no overlapping and wasteful computation occurs. %Similarly, in the second case the dual vector $u_i$ and all $x_j$ that required for its computation are grouped together. 
% }

   % In general many other problems can be implemented without wasteful computations. A similar strategy is used for the V\~u-Condat  algorithm in \cite{fercoq2015coordinate} where duplicate dual variables are introduced in order to enforce the mixing of the variables coupling through $h$ in place of $L$. In comparison our scheme allows a larger stepsize and allows any form of sweeping. 
 		\begin{rem}
 				Note that in \Cref{Alg:BC} the probabilities $p_i$ are taken fixed, \ie,  the matrix $\Pi$ is constant throughout the iterations. This is a non-restrictive assumption and can be relaxed by considering iteration-varying probabilities $p_i^k$ in \eqref{eq:probs} and modifying \Cref{Alg:BC} by setting: 
 				$$	z^{k+1} =  z^k + \sum_{i=1}^m  \tfrac{\epsilon_i^{k+1}}{mp_i^{k+1}} U_i(Tz^k-z^k).$$ 
 				Let $\Pi^k$ denote the probability matrix defined as in~\eqref{eq:p} using $p_i^{k}$. Then, by arguing as in \Cref{thm:Fejerlike-dg}, it can be shown that 
 				the following stochastic Fej\'er monotonicity holds for the modified sequence:
 				%Then the following stochastic Fej\'er monotonicity holds for the modified sequence: 
 					\begin{align*}
 					\mathbb{E}_{k}\left[\|z^{k+1}-z^{\star}\|_{S}^{2}\right]\leq&\|z^{k}-z^{\star}\|_{S}^{2}\nonumber\\&-\|Tz^{k}-z^{k}\|_{\tfrac{2}{m}\tilde{P}-\tfrac{1}{m^2}S(\Pi^{k+1})^{-1}}^{2}.
 					\end{align*}     
 				\end{rem}
 				%  \grayout{
 				% \begin{rem}\label{rem:overlap}    
 				% 	In \Cref{ass:4-1} we have adopted $0$-$1$ matrices $U_i$, thus implicitly assuming that the blocks of coordinates do not overlap. In the case of single active coordinate (see \eqref{eq:Phi2}), this assumption can be relaxed, and one may use any diagonal $U_i$ as long as each diagonal element belongs to the interval $[0,1)$ and $\sum_{i=1}^{m} U_{i} = I_{n+r}$ holds. The results of  \Cref{thm:Fejerlike-dg} continue to hold in this new setting (in fact, the only modification in the proof of the theorem is that the last equality in \eqref{eq:forrem} holds with an inequality).
 				% \end{rem}
 				% }

 				% 

	    \section{Linear Convergence}\label{sec:linConv}
		%!TEX root = ../../paper.tex
 
% In this section we provide sufficient conditions for the cost functions $f$, $g$ and $h$ under which \Cref{Alg:synch-1,Alg:BC} achieve linear convergence.

In this section, we establish linear convergence of \Cref{Alg:synch-1,Alg:BC} under additional conditions on the cost functions $f$, $g$ and $h$. 
% \hl{Notably, our conditions are weaker than the strong convexity assumption typically required to establish linear convergence of first-order optimization algorithms \nf{reference}}. 
To this end, we show that linear convergence is attained if  the monotone operator $F=A+M+C$  defining the primal-dual optimality conditions (\textit{cf.} \eqref{eq:inclusion}) is \emph{metrically subregular} (globally metrically subregular in the case of \Cref{Alg:BC}). A notable consequence of our analysis is the fact that linear convergence is attained when the cost functions either a) belong in the class of \emph{piecewise linear-quadratic} (PLQ) convex functions or b) when they satisfy a certain \emph{quadratic growth}  condition (which is much weaker than strong convexity). %typically required for linear convergence). %Both of these sufficient conditions are weaker that the standard strong convexity assumption. In fact, 
Moreover, notice that in the case of PLQ the solution need not be unique (\cf  \cref{Thm: metricSub,thm:linearCon}). 
% To this end, we first derive sufficient conditions under which the monotone operator $F=A+M+C$  defining the primal-dual optimality conditions (\textit{cf.} \eqref{eq:inclusion}) is \emph{metrically subregular} (globally in the case of \Cref{Alg:BC}). We proceed to show that metric subregularity of $F$ is sufficient for linear convergence of \Cref{Alg:synch-1,Alg:BC}. A useful special case of our analysis amounts to the fact that linear convergence is attained when the cost functions either a) belong in the class of \emph{piecewise linear-quadratic} (PLQ) convex functions or b) when they satisfy a certain \emph{quadratic growth}  condition. 

We first recall the notion of \emph{metric subregularity} \cite{dontchev2009implicit}.
  \begin{defin}[Metric subregularity] \label{metricsub}
  	A set-valued mapping $F:\R^n\rightrightarrows\R^d$ is \emph{metrically subregular} at $\bar{x}$ for $\bar{y}$ if $(\bar{x},\bar{y})\in\gra F$ and there exists a positive  constant $\eta$ together with %$\nu\in]0,\infty[$ 
  	 a \emph{neighborhood of subregularity} $\mathcal{U}$ of $\bar{x}$ such that 
   	\begin{align*}%\label{eqnn:metricsubregularity}
  	d(x,F^{-1}\bar{y})\leq{}& \eta d(\bar{y},Fx){ \;\; \forall} %  \;\; \textrm{for all} \; 
  	x\in \mathcal{U}. %\; \textrm{with} \; \|F(x)\|\leq\nu.
  	\shortintertext{
  	If the following stronger condition holds }
    \|x-\bar{x}\|\leq{}& \eta d(\bar{y},Fx) \;\; \forall  
    x\in \mathcal{U}, 
    \end{align*}
       then $F$ is said  to be \emph{strongly subregular} at $\bar{x}$ for $\bar{y}$.

    % Equivalently, $F$ is strongly subregular at $\bar{x}$ for $\bar{y}$ (in a possibly smaller neighborhood) if $F$ is metrically subregular at $\bar{x}$ for $\bar{y}$, and $\bar{x}$ is an isolated point of $F^{-1}\bar{y}$. 
  	% \hl{Furthermore, $F$ is said to be \emph{strongly subregular} at $\bar{x}$ for $\bar{y}$ if there exists  a positive  constant $\eta$ together with a neighborhood of subregularity $\mathcal{U}$ of $\bar{x}$ such that 
   %      \begin{equation*}%\label{eqnn:strsubregularity}
   %  \|x-\bar{x}\|\leq \eta d(\bar{y},Fx){, \;\; \forall} %  \;\; \textrm{for all} \; 
   %  x\in \mathcal{U}. %\; \textrm{with} \; \|F(x)\|\leq\nu.
   %  \end{equation*}}
   %  Equivalently, 
  	% If in addition $\bar{x}$ is an isolated point of $F^{-1}\bar{y}$, % with $F^{-1}\bar{y}\cap \mathcal{U}=\{\bar{x}\}$, %relative to $\mathcal{U}$ %, \ie,  $F^{-1}\bar{y}\cap \mathcal{U}=\{\bar{x}\}$, 
   %  then $F$ is said to be \emph{strongly subregular} at $\bar{x}$ for $\bar{y}$. 
   %  \hl{Otherwise stated, there exists  a positive  constant $\eta$ together with a neighborhood of subregularity $\mathcal{U}$ of $\bar{x}$ such that 
   %      \begin{equation*}%\label{eqnn:strsubregularity}
   %  \|x-\bar{x}\|\leq \eta d(\bar{y},Fx){, \;\; \forall} %  \;\; \textrm{for all} \; 
   %  x\in \mathcal{U}. %\; \textrm{with} \; \|F(x)\|\leq\nu.
   %  \end{equation*}}
    % Such a $\mathcal{U}$ is called a \emph{neighborhood of subregularity}.

   Moreover, we say that $F$ is globally (strongly) subregular at $\bar{x}$ for $\bar{y}$ if (strong) subregularity holds with $\mathcal{U}=\R^n$. 
    % The neighborhood $\mathcal{Y}$ in the above definitions can be omitted \cite[Ex. 3H.4]{dontchev2009implicit}.
    % 	If in addition $\bar{x}$ is an isolated point of $F^{-1}\bar{y}$, % with $F^{-1}\bar{y}\cap \mathcal{U}=\{\bar{x}\}$, %relative to $\mathcal{U}$ %, \ie,  $F^{-1}\bar{y}\cap \mathcal{U}=\{\bar{x}\}$, 
    % then $F$ is said to be \emph{strongly subregular} at $\bar{x}$ for $\bar{y}$. 
    % \hl{Otherwise stated, there exists  a positive  constant $\eta$ together with a neighborhood of subregularity $\mathcal{U}$ of $\bar{x}$ such that 
    %     \begin{equation*}%\label{eqnn:strsubregularity}
    % \|x-\bar{x}\|\leq \eta d(\bar{y},Fx){, \;\; \forall} %  \;\; \textrm{for all} \; 
    % x\in \mathcal{U}. %\; \textrm{with} \; \|F(x)\|\leq\nu.
    % \end{equation*}}
    % % Such a $\mathcal{U}$ is called a \emph{neighborhood of subregularity}. 
    % \hl{Moreover, we say that $F$ is globally (strongly) subregular at $\bar{x}$ for $\bar{y}$ if (strong) subregularity holds with $\mathcal{U}=\R^n$.} 
  \end{defin}
  % note that the original definition of metric subregularity includes a neighborhood of $\bar{y}$  
   We refer the reader to \cite[Chap. 9]{rockafellar2009variational}, \cite[Chap. 3]{dontchev2009implicit} and \cite[Chap. 2]{ioffe2017variational} for further discussion on metric subregularity.   

 % Notice that original definition of metric subregularity \cite[Chp. 3H]{dontchev2009implicit} a neighborhood $\mathcal{Y}$ of $\bar{y}$. However, this neighborhood is omitted above \cite[Ex. 3H.4]{dontchev2009implicit}.
   % Let us first define the notion of quadratic growth:

   % \begin{defin}[Quadratic growth]
   %     A proper closed convex function $q$ has quadratic growth at $\bar{x}$ for $\bar{y}$ if $(\bar{x},\bar{y})\in\gra \partial q$ and there exists a positive constant $c$ and a neighborhood $\mathcal{U}$ of $\bar{x}$ such that 
   %        \begin{equation*}%\label{eq:quadgrowth}
   % q(x)\geq q(\bar{x}) + \langle\bar{y},x-\bar{x}\rangle+cd^2(x,(\partial{q})^{-1}(\bar{y})),\quad \forall x\in \mathcal{U}.
   % \end{equation*} 
   % If in addition $\bar{x}$ is an isolated point 
   %   \end{defin}  

   Metric subregularity of the subdifferential operator has been studied thoroughly and is equivalent to the \emph{quadratic growth condition} \cite{aragon2008characterization,drusvyatskiy2016error} defined next.  
    In particular, for a proper closed convex function $f$, the subdifferential $\partial f$ is metrically subregular at $\bar{x}$ for $\bar{y}$ with $(\bar{x},\bar{y})\in\gra \partial f$
   if and only if there exists a positive constant $c$ and a neighborhood $\mathcal{U}$ of $\bar{x}$ such that the following %quadratic 
   growth condition holds \cite[Thm. 3.3]{aragon2008characterization}:
%   For a proper closed and convex function $f$, the metric subregulairty of $\partial f$ at $\bar{x}$ for $\bar{y}$ with $(\bar{x},\bar{y})\in\gra \partial f$ is equivalent to quadratic growth of the function \cite[Theorem 3.3]{aragon2008characterization}:
   \begin{equation*}%\label{eq:quadgrowth}
   f(x)\geq f(\bar{x}) + \langle\bar{y},x-\bar{x}\rangle+cd^2(x,(\partial{f})^{-1}(\bar{y}))\quad \forall x\in \mathcal{U}
   \end{equation*} 
   Furthermore, $\partial f$ is strongly subregular at $\bar{x}$ for $\bar{y}$ with $(\bar{x},\bar{y})\in\gra \partial f$, if and only if there exists a positive constant $c$ and a neighborhood $\mathcal{U}$ of $\bar{x}$ such that \cite[Thm. 3.5]{aragon2008characterization}:
   \begin{equation}\label{eq:quadgrowth-s}
   f(x)\geq f(\bar{x}) + \langle\bar{y},x-\bar{x}\rangle+c\|x-\bar{x}\|^2\quad \forall x\in \mathcal{U}
   \end{equation}    
   % We refer to this as \emph{strong quadratic growth} of $f$ 
    Note that strongly convex functions satisfy~\eqref{eq:quadgrowth-s}, but~\eqref{eq:quadgrowth-s} is \emph{much weaker} than strong convexity, as it is a \emph{local condition}: it only holds in a neighborhood of $\bar{x}$, and also only for $\bar{y}$. %In fact, although our focus here is on convex functions, \nf{we point out that}, in general, \eqref{eq:quadgrowth-s} does not imply convexity. 
   
The lemma below provides a sufficient condition for metric subregularity of the monotone operator $A+M+C$, in terms of  
strong subregularity of $\nabla f+\partial g$ and $\partial h^*$ (equivalently the quadratic growth of $f+g$ and $h^*$, \textit{cf.} \eqref{eq:quadgrowth-s}) as stated in the following assumption:
\begin{ass}[Strong subregularity of $\nabla f+\partial g$ and $\partial h^*$] \label{ass:Quad} \label{ass:quadgrowthcost}
    There exists $z^\star=(u^\star,x^\star)\in\mathcal{S}$ satisfying:
    \begin{enumerate}
       \item $\nabla f+\partial g$ is strongly subregular at $x^\star$  for $-L^\top u^\star$,
       \item $\partial h^*$ is strongly subregular at $u^\star$  for $Lx^\star$. 
     \end{enumerate} 
    We say that $f$, $g$ and $h$ satisfy this assumption \emph{globally} if
    % $\nabla f+\partial g$ and $\partial h^*$ are globally strongly subregular 
     the strong subregularity assumption of $\nabla f+\partial g$ and $\partial h^*$  both hold globally (\textit{cf.} \Cref{metricsub}). 
    % , \ie,  are the whole space, \ie, if 
 % $\mathcal{U}_{u^\star}=\R^r$, $\mathcal{U}_{x^\star}=\R^n$.  
\end{ass} 
 % We emphasize that since strong subregularity is weaker that strong convexity, 
 In particular, \Cref{ass:Quad} holds globally if either $f$ or $g$ (or both) are strongly convex and $h$ is continuously differentiable with Lipschitz continuous gradient, \ie, $h^*$ is strongly convex. 
 %\nf{Would be very useful to explain what fails if only $f$ is strongly convex and $h$ is non-differentiable.}
%
 %\nf{More importantly, it is important to illustrate examples og such functions (and perhaps relate them to applications).} \textcolor{orange}{we don't need $h$ to be differentiable but $\partial h^*$ should be strongly subregular. As for example, I need to think to come up with some useful function that is not strongly convex but strongly subregular. Any suggestions?}
% Next lemma provides a sufficient condition for metric subregularity of $A+M+C$ in terms of  
% %the cost functions $f$, $g$ and $h$. 
% strong subregularity of $\nabla f+\partial g$ and $\partial h^*$, or equivalently the quadratic growth of $f+g$ and $h^*$, \textit{cf.} \eqref{eq:quadgrowth-s}. %, under which the linear convergence of \Cref{Alg:synch-1} follows from \Cref{Thm: metricSub}. 
 \begin{lem} \label{lem:QuadGrow}
   Let \Cref{ass:1,ass:Quad} hold. Then $F=A+M+C$ (\textit{cf.} \eqref{monoper}) is strongly subregular at ${z}^\star$ for $0$. 
    Moreover, if $f$, $g$ and $h$ satisfy \Cref{ass:Quad} \emph{globally}, then $F$ is globally strongly subregular at ${z}^\star$ for $0$. In both cases the set of primal-dual solutions is a singleton, $\mathcal{S}=\{z^\star\}$.
    \end{lem}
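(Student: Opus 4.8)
The plan is to reduce strong subregularity of the monotone operator $F=A+M+C$ to the two scalar hypotheses of \Cref{ass:Quad} by passing through the \emph{quadratic growth} reformulation \eqref{eq:quadgrowth-s} and then exploiting the skew-symmetry of $M$. First I would note that, since $f$ is finite-valued and differentiable, $\nabla f+\partial g=\partial(f+g)$, so the first item of \Cref{ass:Quad} is precisely strong subregularity of the subdifferential $\partial(f+g)$ at $x^\star$ for $-L^\top u^\star$. Invoking \cite[Thm. 3.5]{aragon2008characterization} (i.e.\ \eqref{eq:quadgrowth-s}) for both $f+g$ and $h^*$ furnishes constants $c_1,c_2>0$ and neighborhoods $\mathcal{U}_1$ of $x^\star$, $\mathcal{U}_2$ of $u^\star$ on which the respective growth inequalities hold. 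A one-line convexity computation (subtracting the subgradient inequality from the growth inequality) then upgrades each growth estimate to a \emph{strong-monotonicity-type} bound along the graph: for every $(x,\xi)\in\gra\partial(f+g)$ with $x\in\mathcal{U}_1$ one has $\langle\xi+L^\top u^\star,\,x-x^\star\rangle\ge c_1\|x-x^\star\|^2$, and symmetrically $\langle\zeta-Lx^\star,\,u-u^\star\rangle\ge c_2\|u-u^\star\|^2$ for $(u,\zeta)\in\gra\partial h^*$ with $u\in\mathcal{U}_2$.

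Next I would fix an arbitrary $z=(u,x)$ in the neighborhood $\mathcal{U}_2\times\mathcal{U}_1$ of $z^\star$ and an arbitrary $w=(w_u,w_x)\in Fz$. Unpacking the definitions in \eqref{monoper}, the inclusion $w\in Fz$ means $w_u+Lx\in\partial h^*(u)$ and $w_x-L^\top u\in\partial(f+g)(x)$. Substituting $\zeta=w_u+Lx$ and $\xi=w_x-L^\top u$ into the two strong-monotonicity bounds and \emph{adding} them, the two cross terms $\langle L(x-x^\star),u-u^\star\rangle$ and $-\langle L^\top(u-u^\star),x-x^\star\rangle$ cancel identically (this is exactly the skew-symmetry of $M$). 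What remains is
\[
\langle w,\,z-z^\star\rangle=\langle w_u,u-u^\star\rangle+\langle w_x,x-x^\star\rangle\ge c\,\|z-z^\star\|^2,\qquad c:=\min\{c_1,c_2\}.
\]
Cauchy--Schwarz then yields $\|w\|\ge c\,\|z-z^\star\|$, and since $w\in Fz$ was arbitrary, $d(0,Fz)\ge c\,\|z-z^\star\|$, i.e.\ $\|z-z^\star\|\le\tfrac1c\,d(0,Fz)$ on $\mathcal{U}_2\times\mathcal{U}_1$. This is strong subregularity of $F$ at $z^\star$ for $0$ with $\eta=1/c$. For the global statement the identical computation applies verbatim with $\mathcal{U}_1=\R^n$ and $\mathcal{U}_2=\R^r$, giving global strong subregularity.

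For the singleton claim I would use that $F$ is maximally monotone, being the sum of the maximally monotone $A$ and the full-domain maximally monotone operator $M+C$ (skew-adjoint plus cocoercive); hence $\mathcal{S}=\zer F$ (\cf \eqref{eq:SfixT}) is closed and convex. Strong subregularity forces $z^\star$ to be an \emph{isolated} point of $\zer F$: any solution $z'$ lying in the neighborhood satisfies $d(0,Fz')=0$, whence $z'=z^\star$. But a convex set possessing an isolated point must be a singleton, since otherwise the segment joining $z^\star$ to any other solution would produce solutions arbitrarily close to $z^\star$. Therefore $\mathcal{S}=\{z^\star\}$ in both the local and global cases.

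The main obstacle is precisely the coupling through $L$. Bounding $\|u-u^\star\|$ and $\|x-x^\star\|$ separately via the metric-subregularity inequalities and the triangle inequality leads to a linear system solvable only when the product of the subregularity constants $\eta_1,\eta_2$ with $\|L\|^2$ is strictly less than one, which need not hold. The decisive step is thus to phrase everything in the inner-product (quadratic-growth / strong-monotonicity) form, in which the off-diagonal contributions of $M$ annihilate by skew-symmetry and no smallness condition on $\|L\|$ is required.
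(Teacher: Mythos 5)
Your proposal is correct, and its core computation is exactly the paper's: invoke the quadratic-growth characterization \eqref{eq:quadgrowth-s} for $f+g$ and $h^*$, combine it with the subgradient inequalities at an arbitrary pair $(z,w)\in\gra F$, let the two cross terms cancel by skew-symmetry of $M$, and conclude $\langle w,z-z^\star\rangle\geq c\|z-z^\star\|^2$ with $c=\min\{c_1,c_2\}$, hence $\|w\|\geq c\|z-z^\star\|$ by Cauchy--Schwarz. Where you genuinely diverge is the endgame, and your route is tighter. The paper discards information at this point: it weakens $\|z-z^\star\|$ to $d(z,F^{-1}0)$, records only \emph{metric} subregularity in \eqref{eq:subreqpro}, and must then prove $\mathcal{S}=\{z^\star\}$ separately --- via a Lagrangian argument (adding the two growth inequalities to get $\mathcal{L}(u^\star,x)-\mathcal{L}(u,x^\star)\geq c\|z-z^\star\|^2$ and testing it against the saddle-point inequality of any nearby solution) together with convexity of $\mathcal{S}$ --- before it can upgrade back to \emph{strong} subregularity. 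You instead keep the inequality $\|z-z^\star\|\leq\tfrac{1}{c}\,d(0,Fz)$ as it stands, which is strong subregularity by definition, and then deduce the singleton claim from it: any zero of $F$ in the neighborhood is forced to coincide with $z^\star$, and a convex set with an isolated point is a singleton, where convexity of $\zer F$ follows from maximal monotonicity of $F=A+(M+C)$ (a full-domain continuous monotone summand added to the maximally monotone $A$). Both uniqueness arguments are sound; yours replaces the paper's Lagrangian computation with two standard facts about maximally monotone operators, and it makes the logical order cleaner (strong subregularity first, uniqueness as a corollary, rather than uniqueness as a prerequisite for strong subregularity).

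One gloss to repair: for the global statement you assert the computation "applies verbatim" with $\mathcal{U}_1=\R^n$, $\mathcal{U}_2=\R^r$. This tacitly assumes that \emph{global} strong subregularity of $\nabla f+\partial g$ and $\partial h^*$ (\Cref{ass:Quad} holding globally) yields the growth inequalities \emph{globally}, i.e., a global version of \cite[Thm. 3.5]{aragon2008characterization}, whose statement is local. The paper flags precisely this step and notes it is obtained by adapting the proof of \cite[Thm. 3.3]{aragon2008characterization}; your write-up should cite or sketch that global equivalence rather than treat it as automatic.
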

     \begin{proof}
    See \Cref{proof:lem:QuadGrow}. 
    \end{proof}
 Our next objective is to show that $A+M+C$ is globally metrically subregular when the functions $f$, $g$ and $h$ are \emph{piecewise linear-quadratic} (PLQ). Note that this assumption does not imply that the set of solutions $\mathcal{S}$ is a singleton, nevertheless, linear convergence can still be established. Let us recall the definition of PLQ functions \cite{rockafellar2009variational}:  
 \begin{defin}[Piecewise linear-quadratic]
 	A function $f:\R^n\to\Rinf$ is called piecewise linear-quadratic (PLQ) if its domain can be represented as the union of finitely many polyhedral sets, and in each such set $f(x)$ is given by an expression of the form $\frac{1}{2}\langle x, Qx\rangle +\langle d, x\rangle+c$, 
% 	\nf{is of the} form\footnote{\nf{Note that metric subregularity holds for general PLQ functions, even non-convex. In our setup, we assume convex functions $f,g,h$.}}: $\frac{1}{2}x^\top Qx+d^\top x+c$, 
 	for some $c\in\R$, $d\in\R^n$, and symmetric matrix $Q\in\R^{n\times n}$. 
 \end{defin}
 The class of PLQ functions is closed under scalar multiplication, addition, conjugation and Moreau envelope %[Exercise 10.22, Theorem 11.14, Example 11.28]
  \cite{rockafellar2009variational}.  
  A wide range of functions used in optimization applications belong to this class, for example: affine functions, quadratic forms, indicators of polyhedral sets, polyhedral norms (\eg, the $\ell_1$-norm), and regularizing functions  such as elastic net, Huber loss, hinge loss, to name a few.  	
  
  %\nfc{Below, to emphasize $z$ and $w$ belong in different spaces, I use $w$ as opposed to $z^\prime$.} \textcolor{green}{PL: I used $z^\prime$ because of notation overload. Later on we use $w$. I don't know, isn't it better to minimize the notation overlap?}\nfc{I don't have a strong opinion. Literature is also sloppy in this. Usually, when using $z,z'$ implicitly you mean these are distinct points in the same space.}
\begin{lem}\label{lem:PLQ}
	Let \Cref{ass:1} hold. In addition, assume that $f$, $g$ and $h$ are piecewise linear-quadratic. Then $F= A+M+C$ (cf. \eqref{monoper}) is metrically subregular with the same constant $\eta$ at any $z$ for any $v$ with $(z,v)\in\gra F$. 
\end{lem}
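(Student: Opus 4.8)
The plan is to establish the stronger structural property that $F$ is a \emph{polyhedral multifunction}, that is, that its graph $\gra F$ is the union of finitely many polyhedral convex sets, and then to read off global metric subregularity from the classical fact that polyhedral multifunctions are metrically subregular at every point of their graph (Robinson's theorem; see \cite{dontchev2009implicit,rockafellar2009variational}). Concretely, once $\gra F$ is known to be piecewise polyhedral, so is $\gra F^{-1}$ (it is obtained by permuting coordinates), and the finite polyhedral decomposition together with a Hoffman-type error bound yields the estimate $\dist(z, F^{-1} v) \le \eta\, \dist(v, Fz)$ with a modulus $\eta$ and no restriction on $z$, which is exactly the asserted global subregularity at $z$ for $v$.

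Thus the whole task reduces to verifying that $\gra F$ is a finite union of polyhedra, which I would build up from the three summands in \eqref{monoper}. For the subdifferential part $A$, recall that the PLQ class is closed under conjugation, so $h^*$ is PLQ whenever $h$ is, and that the subdifferential of a convex PLQ function is piecewise polyhedral, i.e.\ its graph is a finite union of polyhedra (cf.\ \cite{rockafellar2009variational}). Hence $\gra(\partial h^*)$ and $\gra(\partial g)$ are finite unions of polyhedra, and therefore so is $\gra A$, being (after a coordinate permutation) their product. The term $M$ is linear, so $\gra M$ is a linear subspace, in particular polyhedral. Finally, $f$ is PLQ and, by \Cref{ass:1-3}, continuously differentiable; consequently $\nabla f$ is a continuous piecewise-affine single-valued map whose graph is a finite union of polyhedra, so $\gra C$ is piecewise polyhedral as well.

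It remains to combine the pieces into $F = A + M + C$. Setting $D \coloneqq M + C$, which is single-valued and piecewise affine, one has $(z,w) \in \gra F \iff w - D(z) \in A(z) \iff \Theta(z,w) \in \gra A$, where $\Theta\colon (z,w) \mapsto (z,\, w - D(z))$ is a piecewise-affine bijection of $\R^{n+r}\times\R^{n+r}$ with piecewise-affine inverse. Therefore $\gra F = \Theta^{-1}(\gra A)$. Partitioning the space into the finitely many polyhedral regions on which $\Theta$ is affine and using that the preimage of a polyhedron under an affine map is again a polyhedron, one concludes that $\gra F$ is a finite union of polyhedra, so $F$ is a polyhedral multifunction and the conclusion follows from the first paragraph.

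The routine part is the bookkeeping of the piecewise-affine change of variables $\Theta$ (where continuity of $\nabla f$ is used to make $D$ genuinely single valued and $\Theta$ a bijection, keeping the number of polyhedral pieces finite). The main obstacle is the \emph{global} nature of the claim: local metric subregularity of a polyhedral multifunction at a graph point is standard, and upgrading the neighborhood of subregularity to all of $\R^{n+r}$ is where the finiteness of the polyhedral decomposition of $\gra F^{-1}$, combined with a uniform Hoffman error bound, must be invoked carefully.
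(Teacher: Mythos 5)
Your proposal is correct and follows essentially the same route as the paper's proof: both show that $\partial g$, $\nabla f$, $\partial h^*$ are piecewise polyhedral (using closure of the PLQ class under conjugation and \cite[Prop.~12.30(b), Thm.~11.14(b)]{rockafellar2009variational}), that $\gra M$ is polyhedral, that the sum $F=A+M+C$ is therefore piecewise polyhedral, and then invoke the classical polyhedrality-implies-global-subregularity result (the paper cites \cite[Thm.~8.34]{ioffe2017variational}, you invoke Robinson/Hoffman, which is the same underlying fact). Your explicit change-of-variables argument via $\Theta$ merely fills in the summation step that the paper states without detail.
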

%
% Our main convergence rate results are provided in \Cref{Thm: metricSub,thm:linearCon}. Linear convergence is established for \Cref{Alg:synch-1,Alg:BC} under the assumption that $F$ is metrically subregular (globally in the case of \Cref{Alg:BC}). Consequently, \Cref{lem:PLQ,lem:QuadGrow} are used to establish sufficient conditions in terms of the cost functions.
%
% In \Cref{Thm: metricSub,thm:linearCon} the linear convergence results of \Cref{Alg:synch-1,Alg:BC} are established under the assumption that $F$ is metrically subregular (globally in the case of \Cref{Alg:BC})
  \begin{proof}
    See \Cref{proof:lem:PLQ}. 
    \end{proof}

Our main convergence rate results are provided in \Cref{Thm: metricSub,thm:linearCon}. 
In this context, \Cref{lem:PLQ,lem:QuadGrow} are used to establish sufficient conditions in terms of the cost functions. 
We omit the proof  of \Cref{Thm: metricSub} for length considerations. The proof is similar to that of \Cref{thm:linearCon}, the main difference being that in \Cref{Thm: metricSub} \emph{local} (as opposed to global) metric subregularity is used: due to the Fej\'er-type inequality \eqref{Fej:synch2}, $\bar{z}^k$ will eventually be contained in a neighborhood of metric subregularity, where inequality \eqref{eq:metricSub} applies.

  \begin{thm}[Linear convergence of \cref{Alg:synch-1}] \label{Thm: metricSub}
    Consider \Cref{Alg:synch-1} under the assumptions of~\Cref{thm:synch}.  Suppose that $F=A+M+C$ is {metrically subregular} at all $z^\star\in\mathcal{S}$ for $0$. %,  \emph{cf.}~\eqref{eqnn:metricsubregularity}.
    Then $\seq{d_{S}({z}^k,\mathcal{S})}$ converges $Q$-linearly to zero, and  $(z^k)_{k\in\Nn}$ converges $R$-linearly to some $z^\star\in\mathcal{S}$. 
 % Then $d_{S}({z}^k,\mathcal{S})\to 0$ with $Q$-linear rate, and  $z^k\to z^\star$ with $R$-linear rate, for some $z^\star\in\mathcal{S}$. 

  In particular, the metric subregularity assumption holds and the 
	result follows if either one of the following holds: 
    \begin{enumerate}
      \item either $f$, $g$ and $h$ are PLQ, 
      \item \label{thm:linearCon1} or $f$, $g$ and $h$ satisfy \Cref{ass:quadgrowthcost}, in which case the solution is unique. 
    \end{enumerate}
  \end{thm}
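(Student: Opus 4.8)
The plan is to upgrade the Fej\'er inequality of \Cref{thm:synch} into a linear contraction for the distance $d_S(\cdot,\mathcal{S})$, bounding the per-iteration decrease from below via metric subregularity. Fixing $k$ and choosing $z^\star=\mathcal{P}_{\mathcal{S}}^S(z^k)$ in \eqref{Fej:synch2} (here $\mathcal{S}$ is nonempty closed convex), together with $d_S(z^{k+1},\mathcal{S})\le\|z^{k+1}-z^\star\|_S$, yields the distance recursion
\begin{equation}\label{eq:dist-rec}
d_S^2(z^{k+1},\mathcal{S})\le d_S^2(z^k,\mathcal{S})-\|z^{k+1}-z^k\|_{2\tilde P-S}^2.
\end{equation}
By \Cref{thm:synch} the iterates converge to some $z^\infty\in\mathcal{S}$, and \eqref{eq:dist-rec} forces $\|z^{k+1}-z^k\|_{2\tilde P-S}\to0$. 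Since $z^{k+1}-z^k=S^{-1}(H+M^\top)(\bar z^k-z^k)$ with $\bar z^k=(H+A)^{-1}(H-M-C)z^k$ and $S^{-1}(H+M^\top)$ invertible (\cf \eqref{eq:HplusM}), the auxiliary point $\bar z^k$ also converges to $z^\infty$. The crux is then to show that the decrease term dominates $d_S^2(z^k,\mathcal{S})$ up to a multiplicative constant.

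The main step is a residual bound at $\bar z^k$. From $\bar z^k=(H+A)^{-1}(H-M-C)z^k$ we get $(H-M-C)z^k-H\bar z^k\in A\bar z^k$, whence, adding $M\bar z^k+C\bar z^k$ to both sides and using linearity of $M$,
\begin{equation}\label{eq:inFbar}
(H-M)(z^k-\bar z^k)-(Cz^k-C\bar z^k)\in A\bar z^k+M\bar z^k+C\bar z^k=F\bar z^k.
\end{equation}
As $C=\nabla\tilde f$ is Lipschitz (being cocoercive, \cf \Cref{ass:1-3}), \eqref{eq:inFbar} gives $d(0,F\bar z^k)\le\kappa\|z^k-\bar z^k\|$ with $\kappa=\|H-M\|+\ell$, $\ell$ denoting the Lipschitz constant of $C$. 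Because $\bar z^k\to z^\infty\in\mathcal{S}=F^{-1}(0)$ and $F$ is metrically subregular at $z^\infty$ for $0$, there exist $\eta>0$ and $\bar k$ such that, for $k\ge\bar k$, the point $\bar z^k$ lies in the neighborhood of subregularity and $d(\bar z^k,\mathcal{S})\le\eta\,d(0,F\bar z^k)\le\eta\kappa\|z^k-\bar z^k\|$. The triangle inequality then gives $d(z^k,\mathcal{S})\le\|z^k-\bar z^k\|+d(\bar z^k,\mathcal{S})\le(1+\eta\kappa)\|z^k-\bar z^k\|$. This localization --- sure convergence driving $\bar z^k$ into the neighborhood of subregularity, so that only \emph{local} subregularity is needed --- is precisely where the present argument departs from the block-coordinate \Cref{thm:linearCon}, and is the step I expect to require the most care.

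To close, I convert to the weighted norms of \eqref{eq:dist-rec}: by equivalence of norms on $\R^{n+r}$, invertibility of $S^{-1}(H+M^\top)$, and $2\tilde P-S\succ0$ (\cf \Cref{ass:2}), there is $C\ge1$ with $d_S^2(z^k,\mathcal{S})\le C\,\|z^{k+1}-z^k\|_{2\tilde P-S}^2$ for $k\ge\bar k$. Substituting into \eqref{eq:dist-rec} yields $d_S^2(z^{k+1},\mathcal{S})\le(1-\tfrac{1}{C})\,d_S^2(z^k,\mathcal{S})$, i.e.\ $Q$-linear convergence of $\seq{d_S(z^k,\mathcal{S})}$ to zero with factor $\sqrt{1-1/C}<1$. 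Finally, \eqref{eq:dist-rec} gives $\|z^{k+1}-z^k\|_{2\tilde P-S}^2\le d_S^2(z^k,\mathcal{S})$, so the increments $\|z^{k+1}-z^k\|$ decay geometrically; summing the geometric tail shows $\seq{z^k}$ is Cauchy and converges $R$-linearly to $z^\star=z^\infty$. The two stated special cases are immediate: if $f,g,h$ are PLQ then $F$ is globally metrically subregular by \Cref{lem:PLQ}, while under \Cref{ass:quadgrowthcost} $F$ is strongly (hence metrically) subregular at the unique solution by \Cref{lem:QuadGrow}; in either case the hypothesis on $F$ holds.
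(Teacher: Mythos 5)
Your proposal is correct and takes essentially the same route as the paper: the paper omits this proof, stating that it is the proof of \Cref{thm:linearCon} adapted via exactly the localization you carry out --- sure (rather than almost sure) convergence of $z^k$, hence of $\bar z^k$, places $\bar z^k$ in the neighborhood of subregularity for all large $k$, so only local metric subregularity is needed. Your residual inclusion, the bound $d(0,F\bar z^k)\le\kappa\|\bar z^k-z^k\|$, and the norm-conversion constant correspond precisely to the paper's $v^k$, $\xi$, and $\phi$ in the proof of \Cref{thm:linearCon}, and your geometric-tail Cauchy argument correctly supplies the $R$-linear convergence of the iterates claimed in the statement.
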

   \begin{thm}[Linear convergence of \cref{Alg:BC}]  \label{thm:linearCon}
      %Let \Cref{ass:1,ass:2,ass:4} hold. 
      %Consider the sequence $\seq{z^k}$ generated by~\Cref{Alg:BC}. 
      Consider \Cref{Alg:BC} under the assumptions of \Cref{thm:Fejerlike-dg}.
      Suppose that $F=A+M+C$ is globally {metrically subregular} % at all $z^\star\in\mathcal{S}$ 
       for $0$ (\textit{cf.} \cref{metricsub}), \ie, there exists $\eta>0$ such that %for all $z$
      \begin{equation*}%\label{eq:globalmetricsub}
         d(z,F^{-1}0)\leq \eta d(0,Fz) \quad \forall z\in\R^{n+r}.
      \end{equation*}
      Then $\seq{\E\left[d^2_{\Pi^{-1}S}({z}^{k},\mathcal{S})\right]}$ converges $Q$-linearly  to zero. 

       % Then $\E\left[d_{\Pi^{-1}S}({z}^{k},\mathcal{S})\right]\to 0$ with $Q$-linear rate.
The same holds if     % In particular, the global metric subregularity assumption %\eqref{eq:globalmetricsub}
      % holds and the result follows if either one of the following holds: 
       \begin{enumerate}
         % \item either $f$, $g$ and $h$ are PLQ, $\mathcal{S}$ is compact and $\seq{z^k}$ is bounded (\eg if $\dom g$ and $\dom h^*$ are compact),
            \item either $f,g,h$ are PLQ and there exists a compact set $\mathcal{C}$ such that  
            $\seq{z^k}\subseteq \mathcal{C}$ (as is the case if $\dom g$ and $\dom h^*$ are compact),%
         \item \label{thm:linearCon2} or $f$, $g$ and $h$ satisfy \Cref{ass:quadgrowthcost} globally, in which case the solution is unique.  
       \end{enumerate}
    \end{thm}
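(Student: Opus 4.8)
The plan is to combine the stochastic Fej\'er inequality \eqref{eq:stoFej} with the global metric subregularity of $F$ in order to obtain a strict contraction (in conditional expectation) of the quantity $d_{\Pi^{-1}S}^2(z^k,\mathcal{S})$. The key link between the two is to bound the per-step decrease term $\|Tz^k-z^k\|_{2\tilde{P}-S}^2$ from below by a multiple of $d_{\Pi^{-1}S}^2(z^k,\mathcal{S})$. First I would observe that, from the compact representation \eqref{eq:op2}--\eqref{eq:op1}, the residual $z^k-Tz^k$ is, up to the invertible factor $S^{-1}(H+M^\top)$, an element of $Fz^k$ evaluated at the implicit point $\bar z^k$; concretely one shows $-(H+M^\top)(\bar z^k-z^k)$ lies in $F\bar z^k$, so that $d(0,F\bar z^k)\le \|(H+M^\top)(\bar z^k - z^k)\|$. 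This is exactly where \Cref{lem:EquivMetr} (the stated equivalence between metric subregularity of $F$ and of the residual mapping) is invoked: it lets me convert subregularity of $F$ into a bound of the form $d(z^k,\mathcal{S})\le \eta' \|Tz^k-z^k\|$ for a modified constant $\eta'$ absorbing the norm of $S^{-1}(H+M^\top)$ and the equivalence of the various weighted norms.

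Next I would translate all quantities into the weighted metric $\Pi^{-1}S$. Since $\Pi\succ0$ is diagonal and fixed, all the norms $\|\cdot\|_S$, $\|\cdot\|_{\Pi^{-1}S}$, $\|\cdot\|_{2\tilde P-S}$ are equivalent, so there is a constant $\theta>0$ with $\|Tz^k-z^k\|_{2\tilde P-S}^2 \ge \theta\, d_{\Pi^{-1}S}^2(z^k,\mathcal{S})$; here the global subregularity (valid on all of $\R^{n+r}$, not merely a neighborhood) is essential because under a.s.\ convergence one cannot guarantee that $\bar z^k$ has entered any fixed neighborhood. Substituting into \eqref{eq:stoFej} and using $\|z^{k+1}-z^\star\|_{\Pi^{-1}S}^2\ge d_{\Pi^{-1}S}^2(z^{k+1},\mathcal{S})$ for the projection $z^\star=\mathcal{P}_{\mathcal{S}}^{\Pi^{-1}S}(z^k)$ yields
\begin{equation*}
\mathbb{E}_k\!\left[d_{\Pi^{-1}S}^2(z^{k+1},\mathcal{S})\right] \le (1-\theta)\, d_{\Pi^{-1}S}^2(z^k,\mathcal{S}),
\end{equation*}
which is the desired $Q$-linear contraction once one checks $1-\theta\in(0,1)$ (positivity of the factor follows because $\theta$ derives from the Fej\'er decrease, which cannot exceed the full distance). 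Taking total expectations and iterating gives $Q$-linear convergence of $\mathbb{E}[d_{\Pi^{-1}S}(z^k,\mathcal{S})]$ after applying Jensen's inequality to pass from the squared distance to the distance itself.

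Finally, the two sufficient conditions follow directly: \Cref{lem:PLQ} supplies global metric subregularity of $F$ when $f$, $g$, $h$ are PLQ, while \Cref{lem:QuadGrow} supplies \emph{global strong} subregularity (hence in particular global metric subregularity, and uniqueness of the solution) under \Cref{ass:quadgrowthcost} holding globally. The main obstacle I anticipate is the first step: cleanly establishing the lower bound on the decrease term $\|Tz^k-z^k\|_{2\tilde P-S}^2$ in terms of $d(z^k,\mathcal{S})$ through the implicit point $\bar z^k$, since $\bar z^k$ is not the actual iterate and one must carefully track how the subregularity estimate at $\bar z^k$ transfers to $z^k$ and how the preconditioners $S$, $H+M^\top$, $2\tilde P-S$ interact. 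This is precisely the role played by \Cref{lem:EquivMetr}, and getting the constants right across the change of metric is the technically delicate part; the remaining arguments are routine manipulations of equivalent norms and the Robbins--Siegmund--style telescoping already set up in \Cref{thm:Fejerlike-dg}.
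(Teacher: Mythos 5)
Your proposal is correct and follows essentially the same route as the paper's proof: project onto $\mathcal{S}$ in the stochastic Fej\'er inequality \eqref{eq:stoFej}, lower-bound the decrease term $\|Tz^k-z^k\|^2_{2\tilde P-S}$ by $d^2_{\Pi^{-1}S}(z^k,\mathcal{S})$ through the implicit point $\bar z^k$ and global metric subregularity of $F$, and conclude a conditional contraction; the only cosmetic difference is that the paper inlines the residual-versus-$F$ argument (defining $v^k\in F\bar z^k$ explicitly and chaining the constants into $\phi$) rather than appealing to a global variant of \Cref{lem:EquivMetr}. One small correction to your sketch: the element of $F\bar z^k$ is not $-(H+M^\top)(\bar z^k-z^k)$ but $v^k=-(H-M)(\bar z^k-z^k)+C\bar z^k-Cz^k$ (the two coincide only when $\nabla f$ is constant, since $M^\top=-M$); the extra term is handled by Lipschitz continuity of $C$, yielding the paper's constant $\xi=\|H-M\|+\beta_f\|Q\|$ in \eqref{eq:vbound}, after which your bound $d(0,F\bar z^k)\leq\xi\|\bar z^k-z^k\|$ and the rest of the argument go through unchanged.
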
 
     \begin{proof}
    See \Cref{proof:thm:linearCon}. 
    \end{proof}

% It is worth noting that in the recent works \cite{BAUSCHKE20151} linear convergence is established in the framework of non-expansive operators under the bounded linear regularity assumption for $T$. In \cite{Liang2016} the authors use the equivalent assumption that to the metric subregularity of the residue defined as $R=\id-T$.  

% \textcolor{blue}{should this explanation be here or in the introduction?}
In the recent work \cite{Liang2016} the authors establish linear convergence in the framework of non-expansive operators under the assumption that the residual mapping defined as $R=\id-T$ is metrically subregular.
% Our results are similar in nature with the recent work \cite{Liang2016}, where the authors established linear convergence for non-expansive operators under a metric subregularity assumption on the residual mapping defined as $R=\id-T$.  %In  \cite{BAUSCHKE20151} bounded linear regularity of $T$ is used in the framework of non-expansive operators.  
% In the recent works \cite{BAUSCHKE20151,Liang2016} the authors  establish linear convergence in the framework of non-expansive operators under the assumption that the residue defined as $R=\id-T$ is metrically subregular (\cite{BAUSCHKE20151} uses the equivalent assumption that $T$ is boundedly linearly regular). 
 %First notice that the operator $T$ defined in \eqref{eq:opT-nodelay} is not averaged but it is strongly quasi-nonexpansive. More importantly 
However, such a condition is not easily verifiable in terms of conditions on the cost functions.  
In the next lemma, we show that $R$ is metrically subregular if and only if the monotone operator $F$ is metrically subregular. This result connects the two assumptions and is interesting in its own right. More importantly, it enables the use of \Cref{lem:QuadGrow,lem:PLQ} for establishing linear convergence for a wide array of problems. %Notice also that this lemma applies for all the primal-dual algorithms described in \cite[Sec. 5]{Latafat2017}, including the V\~u-Condat algorithm. 
  \begin{lem}\label{lem:EquivMetr}
    Let \Cref{ass:1,ass:2} hold. Consider the operator $T$ defined in \eqref{eq:op2} and a point $z^\star\in\mathcal{S}$. 
   Then $F=A+M+C$ (\textit{cf.} \eqref{monoper}) is metrically subregular at $z^\star$ for $0$ if and only if the residual mapping $R\coloneqq \id-T$  is metrically subregular at $z^\star$ for $0$. 
  \end{lem}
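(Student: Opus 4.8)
The plan is to exploit the explicit algebraic link between the residual $z - Tz$ and the auxiliary point $\bar z = (H+A)^{-1}(H-M-C)z$, together with monotonicity of $A$ and positive definiteness of $P$. First I would record the shared zero set: since $R = \id - T$ is single-valued, $R^{-1}(0) = \fix T = \mathcal{S} = F^{-1}(0)$ (cf. \eqref{eq:SfixT}) and $d(0,Rz) = \|z - Tz\|$, so both subregularity inequalities refer to the same set $\mathcal{S}$ and the same base point $z^\star$. From \eqref{eq:op2} one has the identity $z - Tz = S^{-1}(H+M^\top)(z - \bar z)$; since $S^{-1}(H+M^\top)$ is invertible (it is upper block-triangular with unit diagonal blocks, cf. \eqref{eq:HplusM}), writing $G \coloneqq S^{-1}(H+M^\top)$ gives the two-sided norm equivalence $\|z - \bar z\|/\|G^{-1}\| \le \|z - Tz\| \le \|G\|\,\|z - \bar z\|$. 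Thus $\|z - Tz\|$ and $\|z - \bar z\|$ are interchangeable up to constants.

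Next I would prove two structural estimates that bracket $\|z - \bar z\|$ by distances of $0$ to images of $F$. The definition of $\bar z$ gives $H(z - \bar z) - Mz - Cz \in A\bar z$, so adding $M\bar z + C\bar z$ shows that $v \coloneqq (H - M)(z - \bar z) - (Cz - C\bar z) \in F\bar z$; using that $C$ is Lipschitz (being cocoercive, cf. \eqref{eq:Copt}) yields $d(0, F\bar z) \le \|v\| \le c_3\|z - \bar z\|$ for a constant $c_3$. Conversely, for any $w \in Fz$ the element $w - Mz - Cz$ lies in $Az$; pairing it against $z - \bar z$ with the element $H(z-\bar z) - Mz - Cz \in A\bar z$ and invoking monotonicity of $A$ gives $\langle z - \bar z, H(z-\bar z)\rangle \le \langle z - \bar z, w\rangle$. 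Since $K$ is skew-adjoint we have $\langle z - \bar z, H(z-\bar z)\rangle = \|z - \bar z\|_P^2 \ge \lambda_{\min}(P)\|z-\bar z\|^2$, whence $\|z - \bar z\| \le \|w\|/\lambda_{\min}(P)$; minimizing over $w \in Fz$ gives $\|z - \bar z\| \le d(0, Fz)/\lambda_{\min}(P)$. Both estimates are global in $z$.

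With these in hand each implication is immediate. For ``$F$ subregular $\Rightarrow$ $R$ subregular'', given $z$ near $z^\star$ I would use continuity of $z \mapsto \bar z$ together with $\bar z = z^\star$ when $z = z^\star$ (as $z^\star \in \fix T$) to ensure $\bar z$ lies in the subregularity neighborhood $\mathcal{U}$ of $F$; then $d(\bar z, \mathcal{S}) \le \eta_F\, d(0, F\bar z) \le \eta_F c_3\|z - \bar z\|$, and the triangle inequality $d(z, \mathcal{S}) \le \|z - \bar z\| + d(\bar z, \mathcal{S})$ combined with the norm equivalence yields $d(z, \mathcal{S}) \le \eta_R \|z - Tz\|$ with $\eta_R = (1 + \eta_F c_3)\|G^{-1}\|$. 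For the converse, metric subregularity of $R$ gives $d(z, \mathcal{S}) \le \eta_R\|z - Tz\| \le \eta_R\|G\|\,\|z - \bar z\| \le (\eta_R\|G\|/\lambda_{\min}(P))\,d(0, Fz)$ directly on the $R$-subregularity neighborhood, so no continuity argument is needed in this direction.

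I expect the main obstacle to be the neighborhood bookkeeping in the first implication: the hypothesis controls $F$ only near $z^\star$, so I must certify that $\bar z$ — not $z$ — enters the $F$-subregularity neighborhood, which relies on continuity of the map $z \mapsto \bar z$ and the fact that it fixes $z^\star$. The remaining ingredients — producing the explicit element $v \in F\bar z$ and the monotonicity pairing that turns the skew-adjointness of $K$ into the coercive term $\|z-\bar z\|_P^2$ — are routine once the identity $z - Tz = S^{-1}(H+M^\top)(z-\bar z)$ is in place.
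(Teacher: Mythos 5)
Your proof is correct and follows essentially the same route as the paper's: the same identity $Rz = S^{-1}(H+M^\top)(z-\bar z)$, the same explicit element $v\in F\bar z$ bounded by a constant times $\|z-\bar z\|$, the same monotonicity pairing yielding $\|z-\bar z\|_P^2\le\langle z-\bar z,w\rangle$ for $w\in Fz$, and the same triangle-inequality and neighborhood-transfer structure in each direction. The only differences are cosmetic: you work with $\lambda_{\min}(P)$ and unweighted norms where the paper uses $P$-weighted norms, and you certify that $\bar z$ enters the subregularity neighborhood via continuity of $z\mapsto\bar z$ together with $\bar z=z^\star$ at $z=z^\star$, whereas the paper derives the quantitative bound $\|\bar z-z\|\le c\|z-z^\star\|$ from \eqref{eq:-12}.
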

     \begin{proof}
    See \Cref{proof:lem:EquivMetr}. 
    \end{proof}

   \section{Distributed %Primal-Dual Algorithm for Multi-agent 
   	 Optimization}\label{sec:DistOpt}
   %!TEX root = ../../paper.tex

In this section, we consider a general formulation for multi-agent optimization over a network, and leverage \Cref{Alg:synch-1,Alg:BC} to devise both synchronous and randomized asynchronous distributed primal-dual algorithms. The setting is as follows. We consider an undirected graph  $\mathcal{G}=(\mathcal{V},\mathcal{E})$ over a vertex set $\mathcal{V}=\{1,\ldots,m\}$ with edge set $\mathcal{E}\subset{\mathcal{V}\times \mathcal{V}}$. 
Each vertex %$i\in \mathcal{V}$ 
is associated with a corresponding \emph{agent}, which is assumed to have a local memory and computational unit, and can only communicate with its neighbors. 
%, \ie with adjacent agents in the graph. 
We define the \emph{neighborhood} of agent $i$ by $\mathcal{N}_i\coloneqq \{j|(i,j)\in \mathcal{E}\}$. 
We use the terms vertex, agent, and node interchangeably.
% ; the same holds for edge and (communication) link. 
The goal is to solve the following global optimization problem in a distributed fashion: 
 \begin{subequations} \label{prob:GenProblem}
  	\begin{align}
  		\underset{x_1,\ldots,x_m}{\minimize}&\quad \sum_{i=1}^mf_i(x_i)+g_i(x_i)+h_i\left(L_ix_i\right)\label{eq:gp-1}\\\stt&\quad A_{ij}x_i + A_{ji}x_j=b_{(i,j)}
  		\qquad(i,j)\in \mathcal{E}, \label{eq:gp-2}
  	\end{align}
  \end{subequations}
% \nf{I prefer stating once, at least once with numbering (\eg no number before)}
 where $x_i\in\R^{n_i}$. The cost functions $f_i$, $g_i$, $h_i\circ L_i$ are taken \emph{private} to agent/node $i\in\mathcal{V}$, \ie our distributed methods operate solely by exchanging local  variables among neighboring nodes that are unaware of each other's objectives. The coupling in the problem is represented through the edge constraints \eqref{eq:gp-2}. 

 % and for every edge $(i,j)\in\mathcal{E}$,  and $A_{ij}\in\R^{l_{(i,j)}\times {n_i}}$

 Throughout this section the following assumptions hold:
\begin{ass}  \label{ass:5}
	For each $i=1,\ldots,m$: 
	\begin{enumerate}
		 \item For $j\in \mathcal{N}_i$, $b_{(i,j)}\in\R^{l_{(i,j)}}$ and $A_{ij}\in\R^{n_i} \to \R^{l_{(i,j)}}$ is a linear mapping. 
		%\nfc{Again it's best to write as matrices throughout in the paper.}
		\item $g_i:\R^{n_i}\to\Rinf$, $h_i:\R^{r_i}\to\Rinf$ are proper closed convex functions, and  $L_i:\R^{n_i}\to \R^{r_i}$ is a linear mapping.
		\item \label{ass:5-3}$f_i:\R^{n_i}\to\R$ is  convex, continuously differentiable, and for some $\beta_i\in[0,\infty)$, $\nabla f_i$ is $\beta_i$-Lipschitz continuous  with respect to the metric $Q_i\succ0$, \ie, %for all $x,y\in\R^{n_i}$
		$$\|\nabla{f}_i(x)-\nabla{f}_i(y)\|_{Q_i^{-1}}\leq\beta_i\|x-y\|_{Q_i}\quad x,y\in\R^{n_i}.$$
		\item \label{ass:conctd} The graph $\mathcal{G}$ is connected.
	\item\label{ass:2-4} The set of solutions of~\eqref{prob:GenProblem} is nonempty. Moreover,  there exists $x_i\in\ri\dom g_i$ such that $L_ix_i\in\ri \dom h_i$, for $i=1,\ldots,m$,  and $A_{ij}x_i+A_{ji}x_j=b_{(i,j)}$ for  $(i,j)\in\mathcal{E}$.
		\end{enumerate} 
\end{ass} 

\newcommand{\INDSTATE}[1][1]{\State\hspace{\algorithmicindent}}  % add space to indentation of the steps of the algorithm
\begin{algorithm*}
	\caption{Synchronous \& asynchronous versions of TriPD-Dist algorithm}
	\label{Alg:dist}
	\begin{algorithmic}%[1] 
	 			\algnotext{EndFor}

				\newcommand{\alignThm}[2][c]{%
					\setlength\myLength{\widthof{${\bar{w}}_{(i,j)}^k$}}
					\ifstrequal{#1}{c}{
						\hspace*{0.5\myLength}
						\mathclap{#2}
						\hspace*{0.5\myLength}
					}{}
					\ifstrequal{#1}{l}{
						\mathrlap{#2}
						\hspace*{\myLength}
					}{}
					\ifstrequal{#1}{r}{
						\hspace*{\myLength}
						\mathllap{#2}
					}{}
				}%
		\item[]\textbf{Inputs:} %$\sigma_i,\tau_{i}>0$ for $i=1,\ldots,m$, and $\kappa_{(i,j)}>0$ for $(i,j)\in \mathcal{E}$. 		\textrm{Initial values
		 $x_i^0\in\R^{n_i}$, $y_i^0\in\R^{r_i}$, for $i=1,\ldots,m$, and $w_{(i,j),i}\in\R^{l_{(i,j)}}$ for $j\in\mathcal{N}_i$.
		\For{$k=0,1,\ldots$} 
		%		\item[]\textbf{for} $k=0,\ldots$ \textbf{do:} 
		\item[]  \fbox{
			\begin{minipage}[t][1.005\height]{0.30\textwidth-2\fboxsep-2\fboxrule\relax}
				\textbf{{I}: Synchronous version} 
				\vspace{10pt}
			
				\textbf{for} all agents $i=1,\ldots,m$ \textbf{do}
			\end{minipage} } 
			\fbox {
			\begin{minipage}[t][1.005\height]{0.61\textwidth-2\fboxsep-2\fboxrule\relax}
				\textbf{II: Asynchronous version} 

				\vspace{-1pt} Each agent $i=1.\ldots,m$ is activated independently with probability $p_i>0$
				%draw r.v. $(\epsilon_i^{k+1})_{i=\{1,\ldots,m\}}$ according to $\mathbb{P}(\epsilon_i^1=1)=p_i>0$
				
				\vspace{-1pt}{\textbf{for} all active agents \textbf{do}} 
			\end{minipage} 
		}
		\vspace*{3pt}
				\State {\bf Local updates}:
		\INDSTATE $\alignThm{{\bar{w}}_{(i,j),i}^k}=\tfrac{1}{2}\big({w}_{(i,j),i}^k+{w}_{(i,j),j}^{k}\big)+\tfrac{\kappa_{(i,j)}}{2}\big(A_{ij}x_{i}^k+A_{ji}{x}_{j}^{k}-b_{(i,j)}\big),\quad\forall j\in \mathcal{N}_i$%\}\textrm{ information from neighbors is used}$
		% \INDSTATE \nf{It would be great to study a gossip version as well, in which $\forall j\in \mathcal{N}_i$ is replaced by one selected $j\in \mathcal{N}_i$; this will be a useful extension in future work.}
		\INDSTATE $\alignThm{\bar{{y}}_i^k}=\prox_{\sigma_i{{h}_i}^\star}\big({y}_i^k+\sigma_i{L}_i{x}_i^k\big)$
		% \INDSTATE 	\nf{I would greatly prefer using $\bar{w}_{(i,j),i}^{k+1}$ and $\bar{y}_i^{k+1}$ above.} \textcolor{orange}{This would mean also changing $\bar{u}^k$ to $\bar{u}^{k+1}$ in Algorithm 1. We have used the current notation because having ${k+1}$ for internal variables may be misleading since the "bar" variables are discarded after each iteration. }
		\INDSTATE $ \alignThm{{{x}}_i^{k+1}}=\prox_{\tau_i{g}_i}\big({x}_i^k-\tau_i{L}_i^\top\bar{{y}}_i^k-\tau_i\sum_{j\in\mathcal{N}_i}A_{ij}^\top\bar{w}_{(i,j),i}^k-\tau_i\nabla {f_i}({x_i^k})\big)$
		%\INDSTATE \nf{Careful here: based on your update of $\bar{w}_{(i,j),i}^k$ this would mean that this value has to be `immediately' communicated to be used in the update, hence some synchrony: do you mean the past value instead?
		%	\STATE $\tilde{x}_i^k=\bar{x}_i^k-x_i^k$, $\tilde{y}_i^k=\bar{y}_i^k-y_i^k$, and $\tilde{w}_{(i,j),i}^{k}=\bar{w}_{(i,j),i}^{k}-w_{(i,j),i}^{k}$ for $j\in\mathcal{N}_i$
		\INDSTATE $\alignThm{y_i^{k+1}}=\bar{{y}}_i^k+\sigma_i{L}_i({x}_i^{k+1}-x_i^k)$
		\INDSTATE $\alignThm{w_{(i,j),i}^{k+1}}= \bar{w}_{(i,j),i}^k+\kappa_{(i,j)}{A_{ij}}({x}_i^{k+1}-x_i^k), \quad \forall j\in \mathcal{N}_i$
		\vspace*{2pt}
		\State {\bf Transmission of information}:
		\INDSTATE Send $A_{ij}x_i^{k+1}$, $w_{(i,j),i}^{k+1}$ to agent $j$,  $\forall j\in\mathcal{N}_i$
		% \INDSTATE %\nf{Broadcast may be a misleading term: broadcast typically refers to broadcasting the same value to multiple agents. Additionally, I recommend counting transmissions and storage requirements in a remark.}
		% \textcolor{orange}{What can we use instead of broadcast? perhaps communication with neighbors?}
		\EndFor
	\end{algorithmic}
\end{algorithm*}  
Each agent $i\in \mathcal{V}$ maintains its own local primal variable $x_i\in\R^{n_i}$ and dual variables $y_i\in \R^{r_i}$, and $w_{(i,j),i}\in\R^{l_{(i,j)}}$ (for each $j\in \mathcal{N}_i$), where the former  is related to the linear mapping $L_i$, and the latter is the  local dual variable of agent $i$ corresponding to the edge-constraint~\eqref{eq:gp-2}. 
It is important to note that the updates in \Cref{Alg:dist} are performed locally through communication with neighbors:
 % in a \emph{distributed} fashion:
 the only information that agent $i$ shares with its neighbor $j\in \mathcal{N}_i$ is the quantity $A_{ij}x_i$, along with edge variable $w_{(i,j),i}$, while all other variables are kept \emph{private}.
 % agent $i$ only requires $A_{ji}x_j^k$ and $w_{(i,j),j}^k$ from its neighbors $j\in\mathcal{N}_i$ in order to compute $\bar{w}_{(i,j),i}^k$, and subsequently also updates its variables $\bar{y}_i^{k+1},x_i^{k+1},y_i^{k+1}$ and $w_{(i,j),i}^{k+1}$ \emph{locally}.   
%$(i,j)\in\mathcal{E}$.  
% We assume that agent $i$ can both send information to and receive information from its neighbors (hence the assumption of undirected communication graph). 
% In specific, the only information that agent $i$ shares with its neighbor $j\in \mathcal{N}_i$, is the quantity $A_{ij}x_i$, along with edge variable $w_{(i,j),i}$, while all other variables are kept \emph{private}. %Note in applications where $A_{ij}$ is not full rank, the local decision variable $x_i$ cannot be fully inferred by its neighbors.
% \newcommand{\INDSTATE}{\State}
% In this section, we devise both synchronous and asynchronous algorithms for the multi-agent optimization problem~\eqref{prob:GenProblem-intro}. The synchronous version is based on \Cref{Alg:synch-1} while the asynchronous one is based on \Cref{Alg:BC} for the case of several active coordinates with  independent probabilities. For length considerations other possibilities of random activation are not presented here. For example, if $\Phi$ is set as in \eqref{eq:Phi2}, the analysis can be carried out via \emph{exponential clocks}~\cite{tsitsiklis1986distributed,RK}: agents are assumed to `wake-up' based on independent exponentially distributed tick-down timers.

The proposed distributed protocol features both a synchronous as well as an  asynchronous implementation. In the synchronous version, at every iteration, all the agents update their variables. In the randomized asynchronous implementation, only a subset of randomly activated agents perform updates, at each iteration, and they do so using their local variables as well as information previously communicated to them by their neighbors. After an update is performed, in both cases, updated values are communicated to neighboring agents. 
Notice that  the asynchronous scheme corresponds to the case of multiple coordinate blocks activation %This corresponds to the case of several active coordinates with  independent probabilities 
in \Cref{Alg:BC}. Other activation schemes can also be considered, and our convergence analysis plainly carries over;  notably, the single agent activation which corresponds to the asynchronous model of 
\cite{tsitsiklis1986distributed,RK,RK2} in which agents are assumed to `wake-up' based on independent exponentially distributed tick-down timers.

% (to be used in their future updates).  
  
% \nf{I recommend having a remark on storage requirements: we need to store local values, as well as values from neighbors for our implementation.} \textcolor{orange}{Can you elaborate what you mean? We state that agents are equiped with their own private storage and computational unit. We also state that they are the owner of their own primal-dual vectors (see the paragraph after \Cref{ass:5}). Of course requiring the information from neighbors means that they need to store it somewhere. }
%
%Notice that in the asynchronous scheme when agent $i$ is \emph{idle} at iteration $k$, \ie, when $\epsilon_i^{k}=0$,  then it simply uses the latest variables next time that it wakes up, \ie, we set $x_i^{k+1}=x_i^k$, $y_i^{k+1}=y_i^k$, and $w_{(i,j),i}^{k+1}=w_{(i,j),i}^{k}$ for $j\in\mathcal{N}_i$. 

Furthermore, in \Cref{Alg:dist} each agent $i$ keeps positive local stepsizes $\sigma_i$, $\tau_i$ and $\left(\kappa_{(i,j)}\right)_{j\in\mathcal{N}_i}$. The edge weights/stepsizes $\kappa_{(i,j)}$ may alternatively be interpreted as inherent parameters of the communication graph. For example, they may be used to capture edge's  `fidelity,' \eg the channel quality in a communication link. The stepsizes are assumed to satisfy the following \emph{local} assumption that is sufficient for the convergence of the algorithm (\cf \cref{thm:dist,thm:linearCon-2}).    
% The proposed distributed scheme is summarized in \Cref{Alg:dist}. It includes two forms of activation. In the synchronous case all the agents perform their updates and communicate the results to their neighbors while in the asynchronous case each agent wakes up with an independent probability to perform its updates, \ie, at every iteration there may be several active agents. Each agent $i$ keeps the positive stepsizes $\sigma_i$, $\tau_i$ and the edge-weights $\kappa_{(i,j)}>0$ for $j\in\mathcal{N}_i$.  The edge weight is inherent to the communication graph. In essence, this can be used to capture edge's  `fidelity,' for example the channel quality in a communication link. Edge weights affect agents' stepsizes, cf.~\eqref{cond:cord}. The stepsizes are assumed to satisfy the following local assumption that is sufficient for the convergence of the algorithm.  
\begin{ass}[Stepsizes of \Cref{Alg:dist}] \label{ass:Diststep}
\quad 
	\begin{enumerate}
		\item (node stepsizes) Each agent $i$ keeps two positive stepsizes $\sigma_i$, $\tau_i$. 
		\item (edge stepsizes) A positive stepsize $\kappa_{(i,j)}$ is associated with edge $(i,j)\in\mathcal{E}$, and is shared between agents $i$, $j$. 
		\item \label{cond:cord} (convergence condition) The stepsizes satisfy the following \emph{local} condition  
			\begin{equation*}%\label{cond:cord}
				\tau_i < \frac{1}{\tfrac{\beta_i\|Q_i\|}{2}+ \|\sigma_i L_i^\top L_i+\sum_{j\in\mathcal{N}_i}\kappa_{(i,j)}A_{ij}^\top A_{ij}\|}.
				\end{equation*}
	\end{enumerate} 
% \nf{Since $\sigma_i, \tau_i$ are coupled via this condition, it will be very useful to show some heuristic practical rules for selecting them.}\textcolor{orange}{We can discuss this is the simulation section.}
\end{ass} 
	According to \Cref{cond:cord} the stepsizes $\tau_i,\sigma_i$ for each agent only depend on the local parameters $\beta_i$, $\|Q_i\|$, the edge weights, $\kappa_{(i,j)}$ and the linear mappings $L_i$, and  $A_{ij}$, which are all known to agent $i$; therefore the stepsizes can be selected locally, in a decentralized fashion. 
%	and require no global coordination.

We proceed by casting the multi-agent optimization problem~\eqref{prob:GenProblem} in the form of the structured optimization problem \eqref{eq:equiv-prob}. In doing so, we describe how \Cref{Alg:dist} is derived as an instance of \Cref{Alg:synch-1,Alg:BC}. %The convergence analysis for \Cref{Alg:dist} is presented in \Cref{thm:dist,thm:linearCon-2} below. % are deduced from the results in \Cref{subsec:NewPD,subsec:CD}. 

 Define the linear operator
  $$N_{(i,j)}:\mathsf{x}\mapsto (A_{ij}x_{{i}},A_{ji}x_{{j}}),$$
  % \nf{should we emphasize that the stacking is in blocks of rows (vs columns)?}
  and $\mathsf{N}\in\R^{2\sum_{(i,j)\in\mathcal{E}}l_{(i,j)}\times\sum_{i=1}^m n_i}$ by stacking $N_{(i,j)}$: 
 $$\mathsf{N}:\mathsf{x}\mapsto(N_{(i,j)}\mathsf{x})_{{(i,j)}\in \mathcal{E}}.$$
 Its transpose %\nf{$\mathsf{N}^\top\in\R^{\sum_{i=1}^m n_i\times 2\sum_{(i,j)\in\mathcal{E}}l_{(i,j)}}$  
 is given by:
 $$\mathsf{N}^\top:(w_{(i,j)})_{(i,j)\in \mathcal{E}}\mapsto \tilde{\mathsf{x}}=\sum_{(i,j)\in \mathcal{E}}N^\top_{(i,j)}w_{(i,j)},$$
 with $\tilde{x}_{i}=\sum_{j\in\mathcal{N}_i}A^\top_{ij}w_{(i,j),i}$.  We have set $w_{(i,j)} = (w_{(i,j),i},w_{(i,j),j})$, \ie, we consider two dual variables (of dimension $l_{(i,j)}$) for each edge constraint, where $w_{(i,j),i}$ is maintained by agent $i$ and $w_{(i,j),j}$ by agent $j$.

 Consider the set 
 \begin{equation*} %\label{eq:C} 
 C_{(i,j)}=\{(z_1,z_2)\in\R^{l_{(i,j)}}\times\R^{l_{(i,j)}}\mid z_1+z_2=b_{(i,j)}\}.
 \end{equation*}
  Then problem~\eqref{prob:GenProblem} can then be re-written as: 
 \begin{align}
 \minimize \sum_{i=1}^m&f_i(x_i)+g_i(x_i)+h_i\left(L_ix_i\right) \nonumber\\+& \sum_{(i,j)\in \mathcal{E}} \delta_{C_{(i,j)}}(N_{(i,j)}\mathsf{x})\label{eq:probequi}
 \end{align}

Let $C=\bigtimes_{(i,j)\in \mathcal{E}}C_{(i,j)}$, $L=\blkdiag(L_1,\ldots,L_m)$, and  $\mathsf{L}\mathsf{x}=(L\mathsf{x},\mathsf{N}\mathsf{x})=:(\mathsf{\tilde{y}},\mathsf{\tilde{w}})\in\R^{n_d}$ with $n_d=2\sum_{(i,j)\in \mathcal{E}}l_{(i,j)}+ \sum_{i=1}^{m}r_i$, and rewrite~\eqref{eq:probequi} in the following compact form:
\begin{equation} \label{eq:dist}
\minimize\ \mathsf{f}(\mathsf{x})+\mathsf{g}(\mathsf{x})+\tilde{\mathsf{h}}(\mathsf{L}\mathsf{x}),
\end{equation}
 where $\mathsf{f}(\mathsf{x})=\sum_{i=1}^m{f}_i(x_i)$, $\mathsf{g}(\mathsf{x})=\sum_{i=1}^m{g}_i(x_i)$,
 $\tilde{\mathsf{h}}(\mathsf{\tilde{y}},\mathsf{\tilde{w}})=\mathsf{h}(\mathsf{\tilde{y}})+\delta_C(\mathsf{\tilde{w}})$, 
  $\mathsf{h}(\mathsf{\tilde{y}})=\sum_{i=1}^m{h}_i(\tilde{y}_i)$.

In what follows, $\mathcal{S}$ refers to the set of primal-dual solutions of~\eqref{eq:dist}. 
 As in \Cref{subsec:NewPD}, the primal-dual optimality conditions can be written in the form of monotone inclusion~\eqref{eq:inclusion} with 
      \begin{align*}
      A:&(\mathsf{y},\mathsf{w}, \mathsf{x})\mapsto(\partial \mathsf{h}^*(\mathsf{y}),\partial \delta^*_{C}(\mathsf{w}),\partial \mathsf{g}(\mathsf{x})),\\
      M:&(\mathsf{y},\mathsf{w}, \mathsf{x})\mapsto(-L\mathsf{x},-\mathsf{N}\mathsf{x},{L}^\top{\mathsf{y}}+\mathsf{N}^\top \mathsf{w}),\\
      C:&(\mathsf{y},\mathsf{w}, \mathsf{x})\mapsto(0,0,\nabla{} \mathsf{f}(\mathsf{x})),
       \end{align*} 
    where $\mathsf{{u}}=(\mathsf{{y}},\mathsf{{w}})$ represents the dual vector. 

We define the edge weight matrix as follows $$W=\blkdiag\left((\kappa_{(i,j)}I_{2l_{(i,j)}})_{(i,j)\in \mathcal{E}}\right),$$
where the weights $\kappa_{(i,j)}$ are repeated twice (for each of the two neighboring agents). Furthermore, we set
\begin{align*}
\Sigma&= \blkdiag(\sigma_1I_{r_1},\ldots,\sigma_mI_{r_m},W),\\\Gamma&= \blkdiag(\tau_1 I_{n_1},\ldots,\tau_mI_{n_m}),\\
Q&=\blkdiag(\beta_1Q_{1},\ldots,\beta_mQ_{m}).
%\label{eq:S,steps}
\end{align*}
% where $\sigma_i,\tau_i>0$ are the stepsizes. %, and $Q_i$ for $i=1,\ldots,m$ are defined in~\Cref{ass:5-3}. 

Since $\prox_{\tilde{\h}^\star}(\y,\w)=(\prox_{{\mathsf{h}}^\star}(\y),\w-\mathcal{P}_C(\w))$ (using $\prox_{\delta_C}(\cdot) = \mathcal{P}_C(\cdot)$ along with Moreau decomposition \cite[Thm. 14.3]{bauschke2011convex}) the proximal updates of \Cref{Alg:synch-1}, \cf \eqref{eq:opT-nodelay}, become: 
{\mathtight 
  \begin{align*}
  \bar{{y}}_i&=\prox_{\sigma_i{{h}_i}^\star}({y}_i+\sigma_i{L}_i{x}_i),\\
  \bar{{w}}_{(i,j)}\!\!&={w}_{(i,j)}\!+\!\kappa_{(i,j)}({N}_{(i,j)}\mathsf{x}-\mathcal{P}_{C_{(i,j)}}\!\!(\kappa_{(i,j)}^{-1}{w}_{(i,j)}+{N}_{(i,j)}\mathsf{x})),\\
  \bar{{x}}_i&=\prox_{\tau_i{g}_i}\!\!\!({x}_i-\tau_i{L}_i^\top\bar{{y}_i}-\tau_i(\mathsf{N}^\top\bar{\mathsf{w}})_i-\tau_i\nabla {f}({x_i})).%\\ 
  \end{align*}}
Note that 
for $w_1,w_2\in\R^{l_{(i,j)}}$ the projection onto $C_{(i,j)}$ is
$$\mathcal{P}_{C_{(i,j)}}(w_1,w_2)=\frac{1}{2}\left(w_1-w_2+b_{(i,j)},-w_1+w_2+b_{(i,j)}\right).$$

By assigning to agent $i$ the primal coordinate $x_i$ and dual coordinate $y_i$ and $w_{(i,j),i}$ for all $j\in\mathcal{N}_i$,  \Cref{Alg:dist} is obtained. Note that this assignment entails non-overlapping sets of coordinates, \ie \Cref{ass:4-1} is satisfied.  % In the synchronous version of \Cref{Alg:dist}, at each iteration all the agents must update their values. In the asynchronous version, at every iteration agents wake up randomly. 
 % In both versions agent $i$ only requires $A_{ji}x_j^k$ and $w_{(i,j),j}^k$ from neighbors $j\in\mathcal{N}_i$ to compute $\bar{w}_{(i,j),i}^k$. % while all the other steps are local. 
  % Notice that when an agent is activated, it updates its values and subsequently broadcasts 
  % the relevant information to its neighbors, then goes idle until next activation.

% The convergence results of \Cref{Alg:dist} are provided separately for the synchronous and asynchronous schemes in the next two theorems, along with sufficient conditions for linear convergence. 
% The proofs follow directly from \Cref{Thm: metricSub,thm:linearCon}.  

% \begin{thm}[Convergence of \texorpdfstring{\hyperref[Alg:dist]{Algorithm 3-I}}{Algorithm 3-I}] \label{thm:dist}
% 	Let \Cref{ass:5,ass:Diststep} hold. %Let ${\mathsf{x}^k}=({{x}_1^k,\ldots,{x}_m^k})$, ${\mathsf{y}^k}=({{y}_1^k,\ldots,{y}_m^k})$ and ${\mathsf{w}^k}={(w_{(i,j)}^k)_{(i,j)\in \mathcal{E}}}$ be the iterates generated by \hyperref[Alg:dist]{Algorithm 3-I}, and set  ${\seq{z^k}}={\seq{ \mathsf{y}^k,\mathsf{w}^k,\mathsf{x}^k}}$.
% 	 %%%Then 

% 	 The sequence \(\mathsf{z}^k=(\mathsf{y}^k,\mathsf{w}^k,\mathsf{x}^k)\) generated by \hyperref[Alg:dist]{Algorithm 3-I}  converges to some $z^\star\in\mathcal{S}$. 

% 	  The sequence $\seq{\mathsf{z}^k}=\seq{\mathsf{y}^k,\mathsf{w}^k,\mathsf{x}^k}$ generated by \hyperref[Alg:dist]{Algorithm 3-I} converges to some $z^\star\in\mathcal{S}$. Furthermore,
% 	$\seq{d_{S}(\mathsf{z}^k,\mathcal{S})}$ converges $Q$-linearly to zero, and $\seq{\mathsf{z}^k}$ converges $R$-linearly to $z^\star\in\mathcal{S}$ if $f_i$, $g_i$ and $h_i$, $i=1,\ldots,m$ are PLQ. 
% 	\end{thm}

The convergence results of \Cref{Alg:dist} are provided separately for the synchronous and asynchronous schemes in the next two theorems, along with a sufficient condition for linear convergence. The proofs follow directly from \Cref{Thm: metricSub,thm:linearCon}.  
\begin{thm}[Convergence of \texorpdfstring{\hyperref[Alg:dist]{Algorithm 3-I}}{Algorithm 3-I}] \label{thm:dist}
	Let \Cref{ass:5,ass:Diststep} hold. 
	  The sequence $\seq{\mathsf{z}^k}=\seq{\mathsf{y}^k,\mathsf{w}^k,\mathsf{x}^k}$ generated by \hyperref[Alg:dist]{Algorithm 3-I} converges to some $z^\star\in\mathcal{S}$. Furthermore, if $f_i$, $g_i$ and $h_i$, $i=1,\ldots,m$ are PLQ, then 
	$\seq{d_{S}(\mathsf{z}^k,\mathcal{S})}$ converges $Q$-linearly to zero, and $\seq{\mathsf{z}^k}$ converges $R$-linearly to $z^\star\in\mathcal{S}$. 
	\end{thm}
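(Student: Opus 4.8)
The plan is to recognize that \hyperref[Alg:dist]{Algorithm 3-I} is exactly \Cref{Alg:synch-1} applied to the reformulated problem~\eqref{eq:dist}, with the operators $A,M,C$ and stepsize matrices $\Sigma,\Gamma,Q$ as defined in the paragraphs preceding the theorem. The derivation there already shows that the TriPD steps, combined with the Moreau decomposition $\prox_{\tilde{\h}^\star}(\y,\w)=(\prox_{\mathsf h^\star}(\y),\w-\mathcal P_C(\w))$ and the closed form of $\mathcal P_{C_{(i,j)}}$, collapse into the per-agent updates of \hyperref[Alg:dist]{Algorithm 3-I}. Hence it suffices to verify that \Cref{ass:1,ass:2} hold for~\eqref{eq:dist} under \Cref{ass:5,ass:Diststep}; convergence to some $z^\star\in\mathcal S$ then follows verbatim from \Cref{thm:synch}.

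To check \Cref{ass:1}, I would observe that $\mathsf g=\sum_i g_i$ and $\tilde{\mathsf h}=\mathsf h+\delta_C$ are proper, closed and convex (with $C$ a nonempty affine set by \Cref{ass:2-4}), and $\mathsf L$ is linear. For the smooth term $\mathsf f=\sum_i f_i$, separability gives $\nabla\mathsf f=(\nabla f_1,\dots,\nabla f_m)$, and a blockwise expansion of the $Q^{-1}$-norm together with \Cref{ass:5-3} yields $\|\nabla\mathsf f(\mathsf x)-\nabla\mathsf f(\mathsf y)\|_{Q^{-1}}^2=\sum_i\tfrac{1}{\beta_i}\|\nabla f_i(x_i)-\nabla f_i(y_i)\|_{Q_i^{-1}}^2\le\sum_i\beta_i\|x_i-y_i\|_{Q_i}^2=\|\mathsf x-\mathsf y\|_Q^2$, i.e.\ \eqref{eq:Lipz} holds with $\beta_f=1$ and $Q=\blkdiag(\beta_1Q_1,\dots,\beta_mQ_m)$. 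The constraint qualification in \Cref{ass:1} is precisely \Cref{ass:2-4}.

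The crux, and the step I expect to require the most care, is \Cref{ass:2}: with $\beta_f=1$ this reads $\Gamma^{-1}-\tfrac12 Q-\mathsf L^\top\Sigma\mathsf L\succ0$. The key observation will be that this matrix is \emph{block diagonal across agents}: because $L=\blkdiag(L_i)$ and $\mathsf N^\top W\mathsf N$ contributes only the diagonal block $\sum_{j\in\mathcal N_i}\kappa_{(i,j)}A_{ij}^\top A_{ij}$ to agent $i$ (no coupling between distinct primal blocks), the $i$-th diagonal block is
\begin{equation*}
\tau_i^{-1}I_{n_i}-\tfrac{\beta_i}{2}Q_i-\sigma_i L_i^\top L_i-\sum_{j\in\mathcal N_i}\kappa_{(i,j)}A_{ij}^\top A_{ij}.
\end{equation*}
I would then bound $\tfrac{\beta_i}{2}Q_i\preceq\tfrac{\beta_i\|Q_i\|}{2}I_{n_i}$ and $\sigma_i L_i^\top L_i+\sum_{j}\kappa_{(i,j)}A_{ij}^\top A_{ij}\preceq\bigl\|\sigma_i L_i^\top L_i+\sum_{j}\kappa_{(i,j)}A_{ij}^\top A_{ij}\bigr\|\,I_{n_i}$, so that the local bound in \Cref{cond:cord} makes each block positive definite, whence the full matrix is. This block-diagonal reduction is exactly what justifies purely \emph{local} stepsize rules, and it is the only genuinely nontrivial point; once it is in place, \Cref{thm:synch} delivers convergence of $\seq{\mathsf z^k}$ to some $z^\star\in\mathcal S$.

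For the linear-rate claim I would use that PLQ is preserved under finite sums, so $\mathsf f$, $\mathsf g$ and $\mathsf h$ are PLQ; since $\delta_C$ is the indicator of a polyhedral set and hence PLQ, $\tilde{\mathsf h}=\mathsf h+\delta_C$ is PLQ as well. \Cref{lem:PLQ} then gives global metric subregularity of $F=A+M+C$, in particular metric subregularity at every $z^\star\in\mathcal S$ for $0$, and the PLQ branch of \Cref{Thm: metricSub} yields $Q$-linear convergence of $\seq{d_S(\mathsf z^k,\mathcal S)}$ to zero and $R$-linear convergence of $\seq{\mathsf z^k}$ to $z^\star$.
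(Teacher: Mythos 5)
Your proposal is correct and takes essentially the same route as the paper, whose own (very terse) proof also treats \hyperref[Alg:dist]{Algorithm 3-I} as \Cref{Alg:synch-1} applied to the reformulation~\eqref{eq:dist} and then invokes \Cref{Thm: metricSub} together with \Cref{lem:PLQ}, using that indicators (and conjugates) of the affine sets $C_{(i,j)}$ are PLQ and that sums of PLQ functions are PLQ. Your explicit verifications---that \eqref{eq:Lipz} holds with $\beta_f=1$ and $Q=\blkdiag(\beta_1Q_1,\ldots,\beta_mQ_m)$, and that the local stepsize rule of \Cref{ass:Diststep} implies \Cref{ass:2} because $\Gamma^{-1}-\tfrac12 Q-\mathsf{L}^\top\Sigma\mathsf{L}$ is block diagonal across agents---are exactly the details the paper leaves implicit, and they are carried out correctly.
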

\begin{thm}[Convergence of \texorpdfstring{\hyperref[Alg:dist]{Algorithm 3-II}}{Algorithm 3-II}] \label{thm:linearCon-2}
	Let \Cref{ass:5,ass:Diststep} hold. 
	 The sequence $\seq{\mathsf{z}^k}=\seq{\mathsf{y}^k,\mathsf{w}^k,\mathsf{x}^k}$ generated by \hyperref[Alg:dist]{Algorithm 3-II} converges almost surely to some $z^\star\in\mathcal{S}$. Furthermore, if $f_i$, $g_i$ and $h_i$, $i=1,\ldots,m$ are PLQ and  $\seq{\mathsf{z}^k}\subseteq \mathcal{C}$ where $\mathcal{C}$ is a compact set, then $\seq{\E\left[d^2_{\Pi^{-1}S}(\mathsf{z}^{k},\mathcal{S})\right]}$ converges $Q$-linearly to zero. 

	 %$\seq{\E\left[d_{\Pi^{-1}S}(\mathsf{z}^{k},\mathcal{S})\right]}$ converges $Q$-linearly to zero. %, if either one of the following conditions holds:
		% \begin{enumerate}
		% 	\item either $f_i$, $g_i$ and $h_i$, $i=1,\ldots,m$ are PLQ,
		% 	\item or $\mathsf{f}$, $\mathsf{g}$ and $\tilde{\mathsf{h}}$ (\cf \eqref{eq:dist}) satisfy \Cref{ass:quadgrowthcost} globally, in which case the solution is unique. 
		% \end{enumerate}
	\end{thm}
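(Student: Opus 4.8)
The plan is to recognize \hyperref[Alg:dist]{Algorithm 3-II} as a concrete instance of \Cref{Alg:BC} applied to the reformulated problem \eqref{eq:dist}, and then to invoke \Cref{thm:Fejerlike-dg} for almost sure convergence and \Cref{thm:linearCon} for the linear rate under the PLQ hypothesis. The first task is to check that the triple $(\mathsf{f},\mathsf{g},\tilde{\mathsf{h}})$ together with $\mathsf{L}$ satisfies \Cref{ass:1}: the functions $\mathsf{g}=\sum_i g_i$ and $\tilde{\mathsf{h}}=\mathsf{h}+\delta_C$ are proper, closed and convex (with $\delta_C$ the indicator of the polyhedral product set $C=\bigtimes_{(i,j)}C_{(i,j)}$), while $\mathsf{f}=\sum_i f_i$ is convex and continuously differentiable. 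The Lipschitz requirement \eqref{eq:Lipz} holds with $\beta_f=1$ and $Q=\blkdiag(\beta_1 Q_1,\ldots,\beta_m Q_m)$, since $\|\nabla\mathsf{f}(\mathsf{x})-\nabla\mathsf{f}(\mathsf{y})\|_{Q^{-1}}^2=\sum_i \beta_i^{-1}\|\nabla f_i(x_i)-\nabla f_i(y_i)\|_{Q_i^{-1}}^2\le\sum_i\beta_i\|x_i-y_i\|_{Q_i}^2=\|\mathsf{x}-\mathsf{y}\|_Q^2$ by \Cref{ass:5-3}. Finally, the constraint qualification \Cref{ass:1-4} is inherited from \Cref{ass:2-4}: because each $C_{(i,j)}$ is affine we have $\ri C=C$, so the point furnished by \Cref{ass:2-4} satisfies $\mathsf{x}\in\ri\dom\mathsf{g}$ and $\mathsf{L}\mathsf{x}\in(\ri\dom\mathsf{h})\times C=\ri\dom\tilde{\mathsf{h}}$.

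The main technical step is to show that the local stepsize condition of \Cref{ass:Diststep} implies the global condition \eqref{cond:synch}, i.e. $\Gamma^{-1}-\tfrac12 Q-\mathsf{L}^\top\Sigma\mathsf{L}\succ0$ with $\beta_f=1$. The key observation is that $\mathsf{L}^\top\Sigma\mathsf{L}$ decouples across agents. Writing $\mathsf{L}=(L,\mathsf{N})$ and $\Sigma=\blkdiag(\sigma_1 I,\ldots,\sigma_m I,W)$, one computes $L^\top\blkdiag(\sigma_i I)L=\blkdiag(\sigma_i L_i^\top L_i)$, while $\mathsf{N}^\top W\mathsf{N}=\blkdiag\big(\sum_{j\in\mathcal{N}_i}\kappa_{(i,j)}A_{ij}^\top A_{ij}\big)_{i=1}^m$, the latter because each $N_{(i,j)}^\top N_{(i,j)}$ acts block-diagonally on the primal coordinates $x_i,x_j$. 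Hence \eqref{cond:synch} reduces to the $m$ decoupled inequalities $\tau_i^{-1}I_{n_i}\succ\tfrac{\beta_i}{2}Q_i+\sigma_i L_i^\top L_i+\sum_{j\in\mathcal{N}_i}\kappa_{(i,j)}A_{ij}^\top A_{ij}$, for each of which the scalar bound in \Cref{cond:cord} (dominating each block by its operator norm) is sufficient.

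Next I would confirm that the coordinate partition underlying \hyperref[Alg:dist]{Algorithm 3-II} meets \Cref{ass:4}. Assigning to agent $i$ the block $(y_i,(w_{(i,j),i})_{j\in\mathcal{N}_i},x_i)$ produces non-overlapping index sets — the two edge duals $w_{(i,j),i}$ and $w_{(i,j),j}$ belong to distinct agents — so the activation matrices $U_i$ partition the coordinates with $\sum_i U_i=I$ (\Cref{ass:4-1}); independent activation with probabilities $p_i>0$ is exactly the multiple-coordinate mechanism of \Cref{subsec:CD} (\Cref{ass:4-2}); and $\Gamma=\blkdiag(\tau_i I)$, $\Sigma=\blkdiag(\sigma_i I,W)$ are diagonal, as \Cref{ass:4} demands. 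A short calculation — already exhibited preceding \Cref{thm:dist}, where $\prox_{\tilde{\mathsf{h}}^\star}$ splits via Moreau decomposition and $\mathcal{P}_{C_{(i,j)}}$ is evaluated in closed form — shows that restricting $T$ to agent $i$'s block reproduces precisely the local updates of \hyperref[Alg:dist]{Algorithm 3-II}, depending only on agent $i$'s own iterates together with the quantities $A_{ji}x_j$ and $w_{(i,j),j}$ received from neighbors.

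With \Cref{ass:1,ass:2,ass:4} thus established, almost sure convergence of $\seq{\mathsf{z}^k}$ to some $z^\star\in\mathcal{S}$ follows directly from \Cref{thm:Fejerlike-dg}. For the linear rate, when every $f_i,g_i,h_i$ is PLQ, so are $\mathsf{f}$, $\mathsf{g}$ and $\tilde{\mathsf{h}}=\mathsf{h}+\delta_C$ (the class being closed under separable sums, and $\delta_C$ being the indicator of a polyhedral set); \Cref{lem:PLQ} then yields global metric subregularity of $F=A+M+C$ for $0$, and \Cref{thm:linearCon} delivers the $Q$-linear convergence of $\seq{\E[d_{\Pi^{-1}S}(\mathsf{z}^k,\mathcal{S})]}$ to zero. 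The only genuine obstacle is the stepsize verification of the second paragraph: one must exhibit the block-diagonal structure of $\mathsf{L}^\top\Sigma\mathsf{L}$ explicitly to see that the global positive-definiteness condition \eqref{cond:synch} separates into the per-agent inequalities of \Cref{ass:Diststep}; once this decoupling is in hand, the remaining work is bookkeeping matching the distributed iteration to the abstract block-coordinate scheme.
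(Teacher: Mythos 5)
Your proposal is correct and follows essentially the same route as the paper: cast \hyperref[Alg:dist]{Algorithm 3-II} as the instance of \Cref{Alg:BC} applied to the reformulation \eqref{eq:dist}, verify \Cref{ass:1}, \Cref{ass:2} and \Cref{ass:4}, then invoke \Cref{thm:Fejerlike-dg} for almost sure convergence and \Cref{thm:linearCon} together with \Cref{lem:PLQ} (using that sums of PLQ functions are PLQ and that $\delta_C$ is polyhedral since the sets $C_{(i,j)}$ are affine) for the $Q$-linear rate. The only difference is one of explicitness: the paper leaves the reduction of \eqref{cond:synch} to the per-agent conditions of \Cref{ass:Diststep} implicit, whereas your block-diagonal computation of $\mathsf{L}^\top\Sigma\mathsf{L}$ spells out exactly that bookkeeping.
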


\section{Application: Formation Control}\label{sec:Formation}
%!TEX root = ../../paper.tex

In this section we consider the problem of formation control of a group of robots \cite{raffard2004distributed,schouwenaars2004decentralized}, where each robot/agent has its own local dynamics and cost function and the goal is to achieve a specific formation by communicating only with neighboring agents. 

For simplicity of visualization we consider a $2$D problem. Each subsystem (corresponding to a robot) has four states $x_i=(p_{x_i},p_{y_i},v_{x_i},v_{y_i})$, where $(p_{x_i},p_{y_i})$ and $(v_{x_i},v_{y_i})$ denote the position and the velocity vectors, respectively.  The input for each system is given by $u_i=(v^u_{x_i},v^u_{y_i})$. The discrete-time LTI model of each system is given by 
\begin{equation*}
  x_i(k+1)= \Phi_i x_i(k) + \Delta_i u_i(k), \quad k=0,1,\ldots.
    \end{equation*}  
  The state and input transition matrices are as follows
  \begin{equation*}
    \Phi_i= \begin{pmatrix}
      I &0 & X_1 & 0 \\0 &I &0 & X_1 \\0 &0 &X_2 & 0 \\0 &0 &0 & X_2 
    \end{pmatrix}, \quad \Delta_i= \begin{pmatrix}
      X_3 &0  \\0 &X_3 \\ X_1 & 0 \\0 &  X_1 
    \end{pmatrix}, 
  \end{equation*} 
  where the parameters are $X_1=-t_d(e^{-\tfrac{1}{t_d}}-1)$, $X_2=e^{-\tfrac{1}{t_d}}$ and \mbox{$X_3=t_d^2(e^{-\tfrac{1}{t_d}}-1+ \tfrac{1}{t_d})$} with time constant $t_d = 5$ (s). This discrete-time model was derived from the continuous-time model of \cite{schouwenaars2004decentralized} using exact discretization with step length $\Delta T=1$. %Refer to \cite{schouwenaars2004decentralized} for further details about the model. 

 Let $N$ denote the horizon length. 
Consider the stacked state and input vectors  $\bm x_i \in \R^{4N},  \bm u_i\in\R^{2N}$: 
 $$\bm x_i \coloneqq (x_i(1),\ldots,x_i(N)),  \;\bm  u_i\coloneqq(u_i(0),\ldots,u_i(N-1)).$$ 
Then the dynamics of each agent can be represented as $\mathcal{A}_i\bm x_i+\mathcal{B}_i\bm u_i=b_i$ where $\mathcal{A}_i$, $\mathcal{B}_i$ are appropriate matrices and $b_i$ depends on the initial state.  The state and input constraints of each agent are represented by the sets $\mathcal{X}_i$, $\mathcal{U}_i$ and are assumed to be easy to project onto, \textit{e.g.}, boxes, halfspaces, norm balls, etc. 
Moreover, we assume that each agent has its own  private objective captured by input and state cost matrices $\mathcal{Q}_i$ and $\mathcal{R}_i$, and vectors $q_i$, $t_i$. 
% quadratic input and state cost.  
The specific formation between agents is enforced using another quadratic term that penalizes deviation of two neighbors from the desired relative position.  The optimization problem is described as follows: 
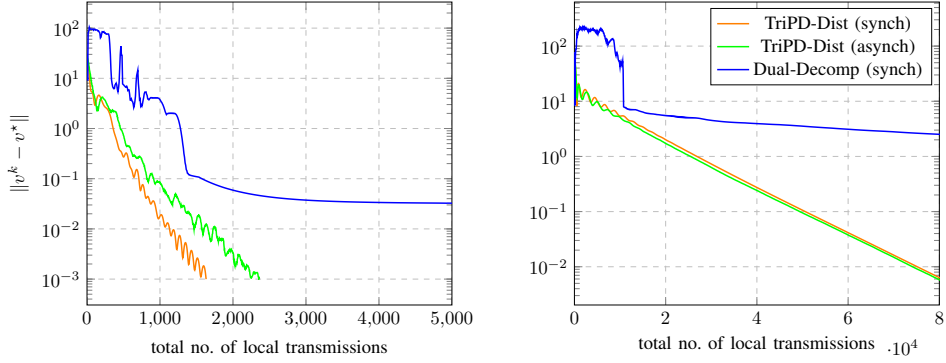
\begin{figure*}
  \center
  \resizebox{0.7\linewidth}{!}{\begin{tikzpicture}
\begin{axis}[ymode=log, name=m5,
xlabel={total no. of local transmissions},
ylabel={$\|v^k-v^\star\|$},
grid=major,
    grid style={line width=.1pt, draw=gray!10, dashed},
    major grid style={line width=.2pt,draw=gray!50},
xmax = 0.5e4,
xmin = 0,
]
%\addplot[color=red,thick]
%	file {TeX/Tikz/plotdata/m10_stepn/Linear_less.dat};
%	\addplot[color=blue,thick]
%	file {TeX/Tikz/plotdata/m10_stepn/logarithm_less.dat};
%	\addplot[color=green,thick]
%	file {TeX/Tikz/plotdata/m10_stepn/genfair_less.dat};
% \addplot[color=red,thick]
% file {TeX/Tikz/plotdata/formation/quadaff.dat};
\addplot[color=orange,thick,mark=o, mark repeat=180]
file {TeX/Tikz/plotdata/formation/smooth.dat};
\addplot[color=green,thick, mark=square*, mark repeat=400]
file {TeX/Tikz/plotdata/formation/smooth_ra.dat};
\addplot[color=blue,thick, mark=triangle*, mark repeat=300]
file {TeX/Tikz/plotdata/formation/subgrad.dat};

\end{axis}

\begin{axis}[ymode=log, at = (m5.south east), xshift = 2.3cm,
xlabel={total no. of local transmissions},
grid=major,
    grid style={line width=.1pt, draw=gray!10, dashed},
    major grid style={line width=.2pt,draw=gray!50},
xmax = 0.8e5,
xmin = 0,
]
% \addplot[color=red,thick]
% file {TeX/Tikz/plotdata/formation/algsmfifty/quadaff.dat};
\addplot[color=orange,thick, mark=o, mark repeat=700]
file {TeX/Tikz/plotdata/formation/algsmfifty/smooth.dat};
\addplot[color=green,thick, mark=square*, mark repeat=1000]
file {TeX/Tikz/plotdata/formation/algsmfifty/smooth_ra.dat};
\addplot[color=blue,thick, mark=triangle*, mark repeat=300]
file {TeX/Tikz/plotdata/formation/algsmfifty/subgrad.dat};

	\legend{TriPD-Dist (synch), TriPD-Dist (asynch), Dual-Decomp (synch)}
\end{axis}

\end{tikzpicture}}
  \caption{Comparison for the convergence of the algorithms for $m=5$ (left), and $m=50$ (right). %The $y$-axis denotes the order of reduction of the residue, $R^k=\bar{v}^k-v^k$, where $v$ is the stacked primal and dual variables of all agents. 
  }
  \label{fig1}
\end{figure*}
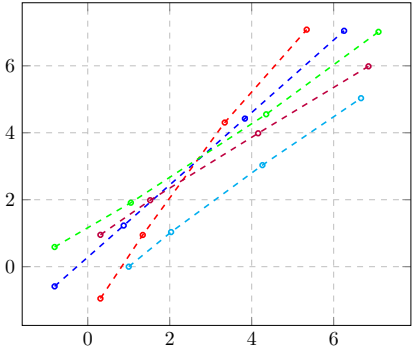
\begin{figure}
  \center
  \resizebox{0.63\linewidth}{!}{	\begin{tikzpicture}
\begin{axis}[grid=major,
    grid style={line width=.1pt, draw=gray!10, dashed},
    major grid style={line width=.2pt,draw=gray!50},
% xlabel={},
% ylabel={$\|v^k-v^\star\|$},
% xmax = 3e4,
% xmin = 0,
% xtick={0,1000,...,3000},
%scaled x ticks={real:0.1},
]
%\addplot[color=red,thick]
%	file {TeX/Tikz/plotdata/m10_stepn/Linear_less.dat};
%	\addplot[color=blue,thick]
%	file {TeX/Tikz/plotdata/m10_stepn/logarithm_less.dat};
%	\addplot[color=green,thick]
%	file {TeX/Tikz/plotdata/m10_stepn/genfair_less.dat};
\addplot[color=red,thick,dashed,mark size= 1.4pt, mark options={solid},mark=o 
]
file {TeX/Tikz/plotdata/formation/location/positions1.dat};
\addplot[color=blue,thick,mark=o,dashed,mark size= 1.5pt,mark options={solid},mark=pentagon*]
file {TeX/Tikz/plotdata/formation/location/positions2.dat};
\addplot[color=green,thick,mark=o,dashed,mark size= 1.5pt,mark options={solid}, mark=+]
file {TeX/Tikz/plotdata/formation/location/positions3.dat};
\addplot[color=purple,thick,mark=o,dashed,mark size= 1.4pt,mark options={solid},mark=square*]
file {TeX/Tikz/plotdata/formation/location/positions4.dat};
\addplot[color=cyan,thick,mark=o,dashed,mark size= 1.7pt,mark options={solid},mark=diamond*]
file {TeX/Tikz/plotdata/formation/location/positions5.dat};
	% \legend{QuadAff, Smooth, DualDecomp}
\end{axis}
	\end{tikzpicture}}
  \caption{Five agents reorganizing from a polygon to an arrow  configuration . %The $y$-axis denotes the order of reduction of the residue, $R^k=\bar{v}^k-v^k$, where $v$ is the stacked primal and dual variables of all agents. 
  }
  \label{fig2}
\end{figure}
	\begin{equation}\label{prob:formation}
    \begin{array}[m]{>{\displaystyle}r >{\displaystyle}l}
    \minimize_{\bm x_i,\bm u_i}{}
    &
  \sum_{i=1}^m\tfrac{1}{2}\|\mathcal{Q}_i\bm x_i- q_i\|^2+\tfrac{1}{2}\|\mathcal{R}_i\bm u_i - t_i\|^2
  \\
  &
  \mathllap{{}+{}}\sum_{i=1}^m\sum_{j\in\mathcal{N}_i}\tfrac{\lambda_i}{2}\|\mathcal{C}(\bm x_i-\bm x_j)-d_{ij}\|^2
  \\[14pt]
  \stt{}
  &
  \mathcal{A}_i\bm x_i+\mathcal{B}_i\bm u_i=b_i,\ 
  \bm x_i\in\mathcal{X}_i,\ 
  \bm u_i\in\mathcal{U}_i
  \\
  &
  i=1,\ldots,m
  \end{array} 
   \end{equation} 
  % where $\mathcal{Q}_i$, $\mathcal{R}_i$ are the input and state cost matrices.
   The relative desired distance of agent $i$ from its neighbor $j$ is given by $d_{ij}$, $\mathcal{C}$ is an appropriate linear mapping that selects the position variables, and $\lambda_i$ is an scalar weight to penalize deviation.    

   For each system that communicates with $i$, \ie, $j\in\mathcal{N}_i$, we introduce a local variable $\bm x_{ij}$, that can be seen as the estimate of $\bm x_j$ kept locally by agent $i$. In order to be consistent hereafter the self variables $\bm x_{i},\bm u_i$ are denoted by $\bm x_{ii},\bm u_{ii}$.  

    For each agent $i=1,\ldots,m$ define the stacked vector
 $${z}_{\mathcal{N}_i}=\left((\bm x_{ij})_{j\in\mathcal{N}_{i}\cup\{i\}},\bm u_{ii}\right)\in\R^{n_i},$$
 where $n_i=4N(|\mathcal{N}_i|+1)+2N$. 
 % , and 
 %  the variables are stacked in ascending order (index-wise), \ie, if agent $4$ has $2$ and $5$ as neighbors, then ${z}_{\mathcal{N}_4}=(\bm x_{42},\bm x_{44},\bm x_{45},\bm u_{44})$.  

 Let $E_i$ be a linear mapping  such that $E_i z_{\mathcal{N}_i} = \mathcal{A}_i\bm x_{ii}+\mathcal{B}_i\bm u_{ii}$. Hence, the set of points satisfying the dynamics are given by 
% \begin{equation}
 $\mathcal{D}_i=\{z\in\R^{n_i}|E_iz={b}_i\}$.
% \end{equation}
Consider the linear mapping $L_i$ such that $L_iz_{\mathcal{N}_i}=(\bm x_{ii},\bm u_{ii})$ and denote $\mathcal{Z}_i\coloneqq\mathcal{X}_i\times \mathcal{U}_i$. Moreover, let ${h}_i\coloneqq\delta_{\mathcal{Z}_i}$, ${g}_i\coloneqq\delta_{\mathcal{D}_i}$  and
\begin{align*}
{f}_i(z_{\mathcal{N}_i})\coloneqq&  \tfrac{1}{2}\|\mathcal{Q}_i\bm x_{ii}-q_i\|^2+\tfrac{1}{2}\|\mathcal{R}_i\bm u_{ii}-t_i\|^2\\&+\tfrac{\lambda_i}{2}\textstyle\sum_{j\in\mathcal{N}_i}\|\mathcal{C}(\bm x_{ii}-\bm x_{ij})-d_{ij}\|^2.
\end{align*}

 % We write the equivalent optimization problem:
 % \begin{align*}
 % {\minimize}&\quad \sum_{i=1}^m  \tfrac{1}{2}\|\mathcal{Q}_i\bm x_{ii}-d_{ii}\|^2+\tfrac{1}{2}\|\mathcal{R}_i\bm u_{ii}\|^2\\&+\tfrac{\lambda}{2}\sum_{i=1}^m\sum_{j\in\mathcal{N}_i}\|\mathcal{C}(\bm x_{ii}-\bm x_{ij})-d_{ij}\|^2\\ \stt&\quad \mathcal{A}_i\bm x_{ii}+\mathcal{B}_i\bm u_{ii}=b_i\quad i=1,\ldots,m\\
 % &\quad \bm  x_{ij}=\bm x_{jj}\quad j\in\mathcal{N}_{i},\quad i=1,\ldots m\\&\quad
 % \bm x_{ii}\in\mathcal{X}_i, 	\bm u_{ii}\in\mathcal{U}_i \quad i=1,\ldots,m
 %  \end{align*}

% The minimization problem becomes: 
% \begin{align}
% {\minimize}& \ \sum_{i=1}^{m}  f_i(z_{\mathcal{N}_i})+g_i(z_{\mathcal{N}_i})+h_i\big({L}_{i}z_{\mathcal{N}_i}\big)\\&\quad A_{ij}z_{\mathcal{N}_i} + A_{ji}z_{\mathcal{N}_j}=0
% \qquad (i,j)\in \mathcal{E}
% \end{align}
% The gradient of $f_i$ for each agent is given by 
% $\nabla f_i(z) = \kappa G_iz$,
% where  
% \begin{equation}
% G_i=\begin{pmatrix}
%   		\kappa^{-1}\mathcal{Q}_i^\top\mathcal{Q}_i + |\mathcal{N}_i|\mathcal{C}^\top\mathcal{C} &-\mathcal{C}^\top\mathcal{C} & \ldots &-\mathcal{C}^\top\mathcal{C}& 0 \\- \mathcal{C}^\top\mathcal{C} &\mathcal{C}^\top\mathcal{C} &\ddots & \vdots & \vdots\\\vdots &\ddots & \ddots & -\mathcal{C}^\top\mathcal{C} \\-\mathcal{C}^\top\mathcal{C} & &\ldots & \mathcal{C}^\top\mathcal{C} & 0 \\ 0 &\ldots &  & 0 & \kappa^{-1}\mathcal{R}_i^\top\mathcal{R}_i  
%   	\end{pmatrix}
% \end{equation}

 With these definitions problem \eqref{prob:formation} is cast in the form of problem \eqref{prob:GenProblem} (minimizing over $z_{\mathcal{N}_i}$, $i=1,\ldots,m$) where  the linear mapping $A_{ij}$, for $j\in\mathcal{N}_i$, is such that $A_{ij}z_{\mathcal{N}_i}=(\bm x_{ii},-\bm x_{ij})$ if $i<j$ and $A_{ij}z_{\mathcal{N}_i}=(-\bm x_{ij},\bm x_{ii})$ otherwise. Therefore, we can readily apply \Cref{Alg:dist} to solve the problem in a fully distributed fashion yielding both synchronous and randomized asynchronous implementations.  

In our simulations we used  horizon length $N=3$. For the input and state constraints of all agents we used box constraints: the positions $p_{x_i}$ and $p_{y_i}$ are assumed to be between $0$ and $20$ (m). The velocities $v_{x_i}$ and $v_{y_i}$ and inputs $v_{x_i}^u$ and $v_{y_i}^u$ are assumed to be between between $0$ and $15$ (m/s) (for all agents). 
 The local state cost matrices are set $\mathcal{Q}_i=0.1 I$  for all $i$. The local input cost matrices are set $\mathcal{R}_i=I$ for half of the agents and $\mathcal{R}_i=2I$ for the rest. Moreover, the vectors $q_i$, $t_i$ are set equal to zero, and the penalty parameter $\lambda_i=10$ is used for all the agents.

The stepsizes of \Cref{Alg:dist} were selected as follows: i) (edge stepsizes) $\kappa_{(i,j)}=1$ for all $(i,j)\in\mathcal{E}$, ii) (node stepsizes) $\sigma_i=\beta_i/4$ and $\tau_i=0.99/(\tfrac{\beta_i}{2}+\sigma_i + \sum_{j\in\mathcal{N}_i}\kappa_{(i,j)})$ for all $i$, where we used
$$
\beta_i=\max\{\|\mathcal{Q}_{i}^{\top}\mathcal{Q}_{i}\|+\lambda_i(|\mathcal{N}_{i}|+1),\|\mathcal{R}_{i}^{\top}\mathcal{R}_{i}\|\},
$$
which is an upper bound for the Lipschitz constant of $\nabla f_i$. It is plain to see that the above choice of stepsizes for the agents satisfy~\Cref{cond:cord}. Note that the stepsize selection only requires local parameters $\mathcal{R}_i$, $\mathcal{Q}_i$, $\lambda_i$ and the number of neighbors $|\mathcal{N}_i|$, \ie, the algorithm can be implemented without \emph{any} global coordination.

In our simulations, we considered $m$ robots initially in a polygon configuration and enforced an arrow formation by appropriate selection of $d_{ij}$ in \eqref{prob:formation}.  This scenario is depicted for $m=5$ in \Cref{fig2}. The neighborhood relation in this case is taken to be the same  arrow configuration, \ie, all agents have two neighbors apart from two agents with only one  neighbor. % (the first and last agents in the arrow). 

For comparison we considered the dual decomposition approach of \cite{raffard2004distributed} (based on the subgradient method). Notice that dual decomposition with gradient or accelerated gradient methods can not be applied to this problem since ${f}_i$'s are convex but not strongly convex.
Recently, \Cref{Alg:dist} was compared against the  dual accelerated proximal gradient method, in the context of distributed model predictive control (with strongly convex quadratic cost) \cite{Latafat2018EC}. 

 % It is worth noting that, recently in \cite{Latafat2018EC}, \Cref{Alg:dist} was applied to the distributed model predictive control problem (with strongly convex smooth term), and the algorithm was compared against the dual decomposition approach based on the fast gradient method.  

In the simulations for \Cref{fig1}, we used the stepsize $10/k$ (as tuned for achieving better performance) for the dual decomposition method where $k$ is the number of iterations.
 Notice that the dual decomposition approach for this problem can not achieve a full splitting of the operators involved: at every iteration agents need to solve an inner minimization (we used MATLAB's \texttt{quadprog} to perform this step), the result of which must be communicated to the neighbors for their computation, and is followed by another communication round. This extra need for synchronization would further slow down the algorithm in practical implementations \cite{freris2011fundamental}.

\Cref{fig1} demonstrates the superior performance 
 of both the synchronous and asynchronous versions of \Cref{Alg:dist} compared to the dual decomposition approach. The $y$-axis is the distance of $v^k\coloneqq(\bm x_{11}^k,\bm u_{11}^k,\ldots, \bm x_{mm}^k,\bm u_{mm}^k)$ from the solution ($v^\star$ was computed by solving \eqref{prob:formation} in a centralized fashion). The $x$-axis denotes the total number of local transmissions between agents. In the asynchronous implementation we used independent activation probabilities $p_i=0.5$ for all agents. It is observed that the total number of local iterations is similar to that of the synchronous implementation.   
Finally, as evident in \Cref{fig1} both versions of \Cref{Alg:dist} achieve linear convergence rate as predicted by \Cref{thm:dist,thm:linearCon-2} (the functions $f_i,g_i$ and $h_i$ are PLQ).

   \section{Conclusions}
   \label{sec:conclusions}
   %!TEX root = ../../paper.tex
The primal-dual algorithm introduced in this paper enjoys several structural properties that distinguish it from other related methods in the literature. A key property, that has  been instrumental in developing a block-coordinate version of the algorithm, is the fact that 
the generated sequence is $S$-Fej\'er monotone, where $S$ is a block diagonal positive definite matrix.
% the diagonal structure of the metric under which Fej\'er monotonicity of the generated sequence holds.
 It is shown that the algorithm attains linear convergence under a metric subregularity assumption that holds for a wide range of cost functions that are not necessarily strongly convex. 
The block-coordinate version of the developed algorithm is exploited to devise a novel fully distributed asynchronous method for multi-agent optimization over graphs. %The slow asymptotic convergence apparent in \Cref{fig1} is inherent to all fixed point iterations like our algorithm. 
Our future work includes designing a block-coordinate version of the \emph{SuperMann} scheme of \cite{themelis2016supermann} %to the block-coordinate case 
that applies to quasi-nonexpansive operators. %Future works include extending the \emph{SuperMann} scheme,  \cite{themelis2016supermann},  to the block-coordinate case and for quasi-nonexpansive operators. 
In light of the fact that this method enjoys superlinear convergence rates, such extension is especially attractive for multi-agent optimization 
%and hence can achieve 
yielding schemes with faster convergence and fewer communication rounds. 
Other research directions enlist investigating extensions to account for directed and time-varying topologies, communication delays, and designing efficient strategies for selecting activation probabilities and stepsizes.

%%%%%%%%%%%%%%%%%%%%%%%%%%%%%%%%%%%%%%%%%%%%%%%%%%%%%%%%%%%%%%%%%%%%%%%%%%%%%%%%
%%%%%%%%%%%%%%%%%%%%%%%%%%%%%%%%%%%%%%%%%%%%%%%%%%%%%%%%%%%%%%%%%%%%%%%%%%%%%%%%

\let\appendix\appendices    % to change ieee appendix
\let\endappendix\endappendices

\begin{appendix}
	\crefalias{section}{appsec}
	\phantomsection
%	\addcontentsline{toc}{section}{\textsc{Appendix}}
	%\proofsection{sec:Conv}
	\section{}%{Omitted Proofs}
	\label{sec:appendix}
	% \nf{no need to call Appendix A (just Appendix); I recommend using hyperref to point to the proof of each result right after it is stated in the main body.} \textcolor{orange}{I removed them because of space constraints that we have. We can return them back on.}
	%!TEX root = ../../../paper.tex

\begin{appendixproof}{lem:nablaf-dg}
	  	Consider the operator  $T$ as in~\eqref{eq:op2}. By monotonicity of $A$ at $z^\star$ and $\bar{z}$ along with~\eqref{eq:op1} we have 
  	\begin{equation} \label{eq:-1p}
  	0\leq \langle -Mz^\star-Cz^\star+Mz+Cz-Hz+H\bar{z}, z^\star - \bar{z}\rangle.
  	\end{equation}
    For $\beta_f>0$, \cref{ass:1-3} is equivalent to $\nabla f$ being \emph{cocoercive} \cite[Thm. 18.16]{bauschke2011convex}, \ie, for all $x,y\in\R^n$:
      \begin{equation}\label{eq:Coco}
      \tfrac{1}{\beta_f}\|\nabla f(x)-\nabla f(y) \|_{Q^{-1}}^2 \leq \langle\nabla f(x)-\nabla f(y), x-y\rangle. 
      \end{equation}
  	On the other hand, for $\beta_f>0$ we have 
       \begingroup
\allowdisplaybreaks
	  	\begin{align}\langle Cz-Cz^{\star}&  ,z^{\star}-\bar{z}\rangle=\langle\nabla f(x)-\nabla f({x}^{\star}),x^{\star}-\bar{x}\rangle\nonumber\\
  		 ={}&\langle\nabla f(x)-\nabla f({x}^{\star}),x-\bar{x}\rangle\nonumber\\
  		& +\langle\nabla f(x)-\nabla f({x}^{\star}),x^{\star}-x\rangle\nonumber\\
  		 \leq{}&\tfrac{1}{\beta_f}\|\nabla f(x)-\nabla f(x^{\star})\|_{Q^{-1}}^{2}+\tfrac{\beta_f}{4}\|x-\bar{x}\|_{Q}^{2}\nonumber\\
  		& +\langle\nabla f(x)-\nabla f({x}^{\star}),x^{\star}-x\rangle\nonumber\\
  		 \leq{}&\langle\nabla f(x)-\nabla f({x}^{\star}),x-x^{\star}\rangle+\tfrac{\beta_f}{4}\|x-\bar{x}\|_{Q}^{2}\nonumber\\
  		& +\langle\nabla f(x)-\nabla f({x}^{\star}),x^{\star}-x\rangle,\nonumber\\
  		={}&\tfrac{\beta_f}{4}\|x-\bar{x}\|_{Q}^{2},\label{eq:coco2}
  		\end{align}
      \endgroup 
  	where we have used \eqref{eq:Fenchel-Young} (with  $V=\tfrac{2}{\beta_f}Q^{-1}$) in the first inequality,  and~\eqref{eq:Coco} in the second inequality, respectively. Notice that if $\beta_f=0$ then inequality \eqref{eq:coco2} holds trivially with equality.     
  	%Setting $\epsilon=\tfrac{1}{2}$ yields \begin{equation}\label{eq:coco2}  	\langle Cz - Cz^\star,z^\star - \bar{z}\rangle \leq \tfrac{1}{4}\|{x}-\bar{{x}}\|_Q^{2},  	\end{equation}
  	
  	 Using~\eqref{eq:coco2} in~\eqref{eq:-1p}, along with skew-symmetry of $K$ and $M$, we have	
   \begingroup
\allowdisplaybreaks
  		\begin{align}
  		0\leq& \langle -Mz^\star-Cz^\star+Mz+Cz-Hz+H\bar{z}, z^\star - \bar{z}\rangle  \nonumber\\ 
  		\leq& \langle(M-K)({z}-z^\star)+P(\bar{z}-{z}), z^\star - \bar{z}\rangle+\tfrac{\beta_f}{4}\|{x}-\bar{{x}}\|_Q^{2} \nonumber\\ 
  		=&   \langle(M-K)({z}-z^\star)+P(\bar{z}-{z}), z^\star - {z}\rangle + \tfrac{\beta_f}{4}\|{x}-\bar{{x}}\|_Q^{2}\nonumber\\
  		&+\langle(M-K)({z}-z^\star)+P(\bar{z}-{z}), {z} - \bar{z}\rangle \nonumber\\
  		=&  \langle P(\bar{z}-{z}), z^\star - {z}\rangle + \tfrac{\beta_f}{4}\|{x}-\bar{{x}}\|_Q^{2}-\|\bar{z}-{z}\|_P^2\nonumber\\
  		&+\langle(M-K)({z}-z^\star), {z} - \bar{z}\rangle 
  		\nonumber\\
  		=&  \langle {z}-z^\star, (H+M^\top)({z}- \bar{z})\rangle \nonumber \\&+\tfrac{\beta_f}{4}\|{x}-\bar{x}\|_Q^{2} - \|\bar{z}-{z}\|_P^2. \label{eq:-12}
  		\end{align}
      \endgroup
 By definition, $S^{-1}(H+M^\top)(\bar{z}-z)=Tz-z$. Thus
  	\begin{align}
  	\langle {z}-z^\star, (H+M^\top)({z}- \bar{z})\rangle &= \langle {z}-z^\star, {z}- Tz\rangle_S.
  	\label{eq:-6-dg}
  	\end{align}
  	On the other hand, we have $\bar{z}-z=(H+M^\top)^{-1}S(Tz-z)$. %with 	$$  	(H+M^\top)^{-1}S=\left[\begin{array}{cc}  	I & -\Sigma{L} \\  	0 & I   	\end{array}\right].  	$$ 
  	Using \eqref{eq:PD-dg},~\eqref{eq:HplusM} and~\eqref{eq:Tz} we conclude 
  	\begin{equation}\label{eq:-7-dg}
  	\|\bar{z}-z\|_{P}^{2}-\tfrac{\beta_f}{4}\|\bar{{x}}-{x}\|_Q^{2}=  \|Tz-z\|_{\tilde{P}}^{2},  
  	\end{equation}
  	where $\tilde{P}$ is defined in~\eqref{eq:tildeP}. 
  	Combining~\eqref{eq:-12}, \eqref{eq:-6-dg} and~\eqref{eq:-7-dg} completes the proof. 
\end{appendixproof}
% \nf{I recommend stating and proving lemma in the appendix as an intermediate result; I do not see reason in having this in the main body.} 
\begin{appendixproof}{thm:synch}
  We establish convergence by showing that the  sequence $\seq{z^k}$ is Fej\'{e}r monotone with respect to $\mathcal{S}=\fix T$. %which coincides with the set of fixed points of $T$. 
 % By \Cref{ass:2-2}, $\Lambda^{-1}S$ is symmetric positive definite. For any $z^\star\in\mathcal{S}$ we have
  	%\small  
    We have
  	\begin{align}
  	\|z^{k+1}-z^\star\|_{S}^2 =& \|Tz^k-z^k+z^k-z^\star\|_{ S}^2\nonumber\\ 
  	={}&\|z^k-z^\star\|_{S}^2 +  \|Tz^k-z^k\|_{S}^2 \nonumber\\&+2\langle z^k-z^\star,Tz^k-z^k \rangle_S\nonumber\\
  	\leq{}&\|z^k-z^\star\|_{S}^2 -  \|Tz^k-z^k\|_{2{\tilde{P}}-S}^2, \label{Fej:synch}
  	\end{align}
  	where the inequality follows from~\Cref{lem:nablaf-dg}. Note that $2\tilde{P}- S$
  	% \begin{equation*}
  	% 2\tilde{P}- S= \begin{pmatrix}
  	% \Sigma^{-1} & -{L}\\
  	% -{L}^\top & \Gamma^{-1}-\tfrac{\beta_f}{2}Q
  	% \end{pmatrix},
  	% \end{equation*}
  %	and by Schur complement it
     is symmetric positive-definite if and only if~\cref{ass:2} holds. Therefore, by~\eqref{Fej:synch} the sequence $\seq{z^k}$ is Fej\'{e}r monotone in the space equipped with  inner product $\langle \cdot,\cdot\rangle_S$; in particular, $\seq{z^k}$ is bounded.  Furthermore, it follows from~\eqref{Fej:synch} and the fact that $2\tilde{P}-S$ is positive-definite that 
  	\begin{equation}\label{eq:Tz-z}
  	\|Tz^k-z^k\|\rightarrow 0.
  	\end{equation} 
  	 The operator $T$ is continuous (since it involves proximal and linear mappings that are continuous, and since $\nabla f$ is assumed continuous).
  	 Let $z^c$ be a cluster point of $\seq{z^k}$. It follows from the continuity of $T$ and~\eqref{eq:Tz-z} that $Tz^c-z^c=0$, \ie, $z^c\in\fix T$. %It is easy to verify that the set of fixed points of $T$ coincides with $\mathcal{S}$. 
  	 The result follows from Fej\'er monotonicity of $\seq{z^k}$ with respect to $\mathcal{S}=\fix T$ and \cite[Thm. 5.5]{bauschke2011convex}. 
  	%Standard arguments as in \cite[Theorem 25.10]{bauschke2011convex} or \cite[Theorem 3.1]{Latafat2017} complete the proof. 
\end{appendixproof}
\begin{appendixproof}{thm:Fejerlike-dg} 
Let us define the operator $E^k\coloneqq \sum_{i=1}^m \epsilon^k_i U_i$ that maps the elements of $(\R^{n+r},\mathcal{F}_{k-1})$ to $(\R^{n+r},\mathcal{F}_{k})$. The iterations of \Cref{Alg:BC} can be written as $z^{k+1}=z^k+E^{k+1}(Tz^k-z^k)$. 
 We have 
  \begingroup
\allowdisplaybreaks
\begin{align}
  \mathbb{E}_{k} \circ& E^{k+1} = \sum_{\varepsilon\in\Psi}\mathbb{P}(\epsilon^{k+1}=\varepsilon)\sum_{j=1}^{m}\varepsilon_{j}U_{j}\nonumber\\&=\sum_{j=1}^{m}\sum_{\varepsilon\in\Psi}\mathbb{P}(\epsilon^{k+1}=\varepsilon) \varepsilon_{j}U_{j}\nonumber\\
  &=\sum_{j=1}^{m}\sum_{\varepsilon\in\Psi,\varepsilon_{j}=1}\mathbb{P}(\epsilon^{k+1}=\varepsilon)U_j = \sum_{j=1}^{m}p_{j} U_j = \Pi,\label{eq:circE} 
  \end{align}
  \endgroup
  where we used \Cref{ass:4-2,ass:4-1}. 
  % Furthermore, it is plain to check that $E^k$ is self-adjoint and idempotent since $U_i$ are $0$-$1$ matrices.  
  %  Therefore  \begin{equation} \label{eq:EE}
  % (E^{k+1})^\top E^{k+1} = E^{k+1}. 
  % \end{equation}
%$$z^{k+1}= z^k +  \Lambda E^k(Tz^k-{{z}^k})$$
%
%We have 
Therefore, we have 
 \begingroup
\allowdisplaybreaks
\begin{align}
  \mathbb{E}_{k}&\left[\|z^{k+1}-z^{\star}\|_{\Pi^{-1}S}^{2}\right]%}
  &
   \nonumber\\
   ={}&
  \mathbb{E}_{k}\left[\|z^{k}+ E^{k+1}(Tz^k-{{z}^k})-z^{\star}\|_{\Pi^{-1}S}^{2}\right]\nonumber\\
={}&\|z^{k}-z^{\star}\|_{\Pi^{-1}S}^{2}+2\langle z^{k}-z^{\star}, \mathbb{E}_{k} \left[E^{k+1}(Tz^k-{{z}^k})\right]\rangle_{\Pi^{-1}S}\nonumber\\
&+\mathbb{E}_{k}\left[\langle E^{k+1}(Tz^{k}-z^{k}),E^{k+1}(Tz^{k}-z^{k})\rangle_{\Pi^{-1}S} \right]\nonumber\\  
={}&\|z^{k}-z^{\star}\|_{\Pi^{-1}S}^{2} +\|Tz^{k}-z^{k}\|_{S}^{2} \nonumber\\
& +2\langle z^{k}-z^{\star},Tz^{k}-z^{k}\rangle_{S} \nonumber%\label{eq:forrem}
\end{align}
\endgroup
where we used \eqref{eq:circE} and the fact $E^k$ is self-adjoint and idempotent (since $U_i$ are $0$-$1$ matrices) in the last equality. Inequality \eqref{eq:stoFej} follows by using \eqref{eq:main-innerprod}. 
The convergence of the sequence follows from \eqref{eq:stoFej} 
using the Robbins-Siegmund lemma \cite{robbins1985convergence} 
and arguing as in \cite[Thm. 3]{bianchi2015coordinate} and \cite[Prop. 2.3]{combettes2015stochastic}.
	\end{appendixproof}
\begin{appendixproof}{lem:QuadGrow}
  From the equivalent characterization of strong subregularity in \eqref{eq:quadgrowth-s} we have that there exists a neighborhood $\mathcal{U}_{{x}^\star}$ of ${x}^\star$ such that for all $x\in\mathcal{U}_{x^\star}$
  \begin{align}
  (f+g)(x)\geq&(f+g)(x^{\star})+\langle -L^{\top}u^{\star},x-x^{\star}\rangle\nonumber\\&+c_{1}\|x-x^\star\|^2,\label{eq:gPlusg}
  \end{align}
  and a neighborhood $\mathcal{U}_{{u}^\star}$ of ${u}^\star$ such that for all $u\in\mathcal{U}_{u^\star}$
  \begin{equation*}
  \stepcounter{equation}\tag{\arabic{equation}}
  \label{eq:hquad}
  h^{*}(u)\geq h^{*}(u^{\star})+\langle Lx^{\star},u-u^{\star}\rangle\nonumber+c_{2}\|u-u^\star\|^2.  
  \end{equation*}
  Fix $z=(u,x)$ with $u\in\mathcal{U}_{u^\star}$ and $x\in\mathcal{U}_{x^\star}$. Consider $v=(v_1,v_2)\in Fz\coloneqq Az+Mz+Cz$. By definition (cf.~\eqref{monoper}) we have% 
   \begin{equation*} %\label{eq:primal-dual}
   \begin{cases}
   v_{1}  \in\partial h^{*}(u)-Lx, & \ \ \ \ \ \ \\ v_{2} \in\partial g(x)+ \nabla f(x)+L^{\top}u.& \ \ \ \ \ \ 
   \end{cases} 
   \end{equation*} 
% \begin{align*}  v_{1} \in&\partial h^{*}(u)-Lx\\  v_{2} \in&\partial g(x)+ \nabla f(x)+L^{\top}u  \end{align*}
  Using this together with the definition of subdifferential yields: 
  \begin{align}
  \langle v_{1}+Lx,u-{u}^\star\rangle&\geq h^{*}(u)-h^{*}({u}^\star),\label{eq:in1}\\
  \langle v_{2}-L^{\top}u,x-x^\star\rangle&\geq (f+g)(x)-(f+g)(x^\star). \label{eq:in2}
  \end{align}
  Combining \eqref{eq:in1}, \eqref{eq:in2} with \eqref{eq:gPlusg}, \eqref{eq:hquad} and noting that
  $$\langle L^\top(u^\star-u),x-x^\star\rangle + \langle L(x-x^\star),u-u^\star\rangle = 0,$$
  yields: 
  \begin{align*}
  \langle v,z-{z}^\star\rangle&=\langle v_1,u-{u}^\star\rangle+ \langle v_2,x-{x}^\star\rangle\\ & \geq c_{2}\|u-{u}^\star\|^{2}+c_{1}\|x-{x}^\star\|^{2} \geq c\|z-z^\star\|^2, 
  \end{align*}
  where $c=\min\{c_1,c_2\}$. Therefore, by the Cauchy-Schwarz inequality 
  $\|v\|\geq c\|z-z^\star\|$. Since $\|z-z^\star\|\geq d(z,F^{-1}0)$, and $v\in Fz$ was selected arbitrarily, we have 
  \begin{equation}
  d(z,F^{-1}0)\leq \tfrac{1}{c}d(0,Fz)\quad \forall z\in {\mathcal{U}_{u^\star}}\times{\mathcal{U}_{x^\star}}. \label{eq:subreqpro}
  \end{equation}
 Thus $F$ is metrically subregular at $z^\star$ for $0$. 
  
  To establish uniqueness of the primal-dual solution consider:
    \begin{equation*}
    \mathcal{L}(u,x)\coloneqq (f+g)(x)+\langle Lx,u\rangle -h^*(u).
    \end{equation*}
     Adding~\eqref{eq:gPlusg} and \eqref{eq:hquad} yields 
    \begin{equation}\label{eq:Lagrangian}
    \mathcal{L}(u^\star,x)-   \mathcal{L}(u,x^\star)\geq c\|z-z^\star\|^2 \quad \forall z\in {\mathcal{U}_{u^\star}}\times{\mathcal{U}_{x^\star}}
    \end{equation}  
    Let $\bar{z}^\star=(\bar{u}^\star,\bar{x}^\star)\in\mathcal{S}$ such that  $\bar{z}^\star\in \mathcal{U}_{u^\star}\times \mathcal{U}_{x^\star}$. Since $\bar{z}^\star$ is also a primal-dual solution we have
$\mathcal{L}(\bar{u}^\star,x^\star)-    \mathcal{L}(u^\star,\bar{x}^\star)\geq 0$.    Therefore, using \eqref{eq:Lagrangian} at $\bar{z}^\star$ yields $\bar{z}^\star=z^\star$. Since $\mathcal{S}$ is convex, we conclude that it is a singleton, \ie, $\mathcal{S}=\{z^\star\}$. Consequently it follows from \eqref{eq:subreqpro} that $F$ is strongly subregular at $z^\star$ for $0$. 

The second part is a direct consequence of the first part and the fact that if \Cref{ass:Quad} holds globally then also the quadratic growth conditions \eqref{eq:gPlusg} and \eqref{eq:hquad} hold globally, \ie, $\mathcal{U}_{x^\star}=\R^n$, $\mathcal{U}_{u^\star}\in\R^r$. This can be shown by adapting the proof of \cite[Thm. 3.3]{aragon2008characterization}.
  \end{appendixproof}

\begin{appendixproof}{lem:PLQ}
Since $f$, $g$ and $h$ are proper closed convex PLQ,  
the subdifferentials $\partial g$, $\nabla f$ and $\partial h^*$ are  piecewise polyhedral mappings \cite[%\nf{Proposition 10.21},
Prop. 12.30(b), Thm. 11.14(b)]{rockafellar2009variational}.  
 The graph of $M$ is polyhedral, since $M$ is linear. Therefore, the sum $F=A+M+C$ is also piecewise polyhedral. 
Since the inverse of a piecewise polyhedral mapping is piecewise polyhedral, the result follows from \cite[3H.1 and 3H.3]{dontchev2009implicit}. 
%\cite[Thm. 8.34]{ioffe2017variational}.
  \end{appendixproof}%{lem:EquivMetr}

\begin{appendixproof}{thm:linearCon}%
	For notational convenience let $\bar{S}=\Pi^{-1}S$ and  note that $\mathcal{S}=\zer F$ (\cf \eqref{eq:SfixT}). 
	By definition we have $\|{z}^k-\mathcal{P}^{\bar{S}}_{\mathcal{S}}({z}^k)\|_{\bar{S}}=d_{\bar{S}}({z}^k,\mathcal{S})$ (where the minimum is attained since $\mathcal{S}$ is a closed convex set). Consequently, it follows from~\eqref{eq:stoFej} 	that
	{\small
	\begin{align}
	{\E}_k\left[d^2_{\bar{S}}({z}^{k+1},\mathcal{S})\right] & \leq {\E}_k\left[\|z^{k+1}-\mathcal{P}^{\bar{S}}_{\mathcal{S}}(z^k)\|^2_{\bar{S}}\right] \nonumber \\
	& \leq \|z^{k}-\mathcal{P}^{\bar{S}}_{\mathcal{S}}(z^k)\|_{\bar{S}}^{2} -\|T{z}^k-{z}^{k}\|_{2\tilde{P}-S}^{2}\nonumber\\&= d_{\bar{S}}^2(z^k,\mathcal{S})  -\|T{z}^k-{z}^{k}\|_{2\tilde{P}-S}^{2}. \label{eq:dlineart}
	\end{align}}
	By definition~\eqref{eq:op2}, we have 
	{\small \begin{align}
	&\|\bar{z}^k-z^k\|^2= \|(H+M^\top)^{-1}S(Tz^k-z^k)\|^2 \nonumber\\ & \leq  \|(H+M^\top)^{-1}S\|^2\|{(2\tilde{P}-S)}^{-1}\|\|Tz^k-z^k\|^2_{2\tilde{P}-S},  \label{eq:ztz}
	\end{align}}%
  % \noindent
	where $\bar{z}^k$ is defined by~\eqref{eq:op1} applied at $z=z^k$. Consider the projection of $\bar{z}^k$ onto $\mathcal{S}$, $\mathcal{P}_{\mathcal{S}}(\bar{z}^k)$. By definition  $\|\bar{z}^k-\mathcal{P}_{\mathcal{S}}(\bar{z}^k)\|=d(\bar{z}^k,\mathcal{S})$, %and $T$ is maximally monotone \cite[Proposition 23.39]{bauschke2011convex}
	and we have
	\begin{align}
	d_{\bar{S}}^2(z^k,\mathcal{S})  &\leq \|z^k-\mathcal{P}_{\mathcal{S}}(\bar{z}^k)\|_{\bar{S}}^2 \leq \|{\bar{S}}\|\|z^k-\mathcal{P}_{\mathcal{S}}(\bar{z}^k)\|^2 \nonumber\\& \leq  \|{\bar{S}}\|\left(\|\bar{z}^k-\mathcal{P}_{\mathcal{S}}(\bar{z}^k)\|+\|\bar{z}^k-z^k\|\right)^2\nonumber\\& = \|{\bar{S}}\|\left(d(\bar{z}^k,\mathcal{S})+\|\bar{z}^k-z^k\|\right)^2.  
	\label{eq:-40}
	\end{align}
In what follows we bound $d(\bar{z}^k,\mathcal{S})$ by $\|\bar{z}^k-z^k\|$. 
 Define 
    \begin{equation}\label{eq:wn}
  v^k\coloneqq -(H-M)(\bar{z}^k-z^k) + C\bar{z}^k- Cz^k.
  \end{equation}
  It follows from~\eqref{eq:op1} that $(H-M-C)z^k\in(H+D)\bar{z}^k$, which in turn implies 
  \begin{equation*}%\label{eq:incwn}
  v^k\in F\bar{z}^k=(A+M+C)\bar{z}^k.
  \end{equation*} 
  Consequently, using (global) metric subregularity of $F$ yields 
  \begin{equation}
  d(\bar{z}^{k},\mathcal{S})\leq\eta\|v^k\|. \label{eq:metricSub}
  \end{equation}
   By the triangle inequality and Lipschitz continuity of $C$,
  {\mathtight\begin{align}
  &\|v^k\|  =\|(H-M)(\bar{z}^k-z^k)-C\bar{z}^k+C{z^k}\| \nonumber\\&\leq \|(H-M)(\bar{z}^k-z^k)\|+\|C\bar{z}^k-Cz^k\| \leq\xi\|\bar{z}^k-z^k\|, \label{eq:vbound}
  \end{align}}
  where $\xi=\|H\!-\!M\| + \beta_f\|Q\|$.  
  % Define $v^k$ as in \eqref{eq:wn} (with the iteration index $k$). 
  By \eqref{eq:metricSub} and \eqref{eq:vbound} we have
  \begin{equation*}
  d(\bar{z}^{k},\mathcal{S})\leq \xi\eta \|\bar{z}^k-z^k\|.
  \end{equation*}
   Combine this with~\eqref{eq:ztz} and \eqref{eq:-40} to derive 
  \begin{align} \label{eq:-1a}
  d_{\bar{S}}^2(z^k,\mathcal{S})  \leq \phi \|Tz^k-z^k\|^2_{2\tilde{P}-S},
  \end{align}
  where $\phi = (\xi\eta+1)^2\|(H+M^\top)^{-1}S\|^2\|{(2\tilde{P}-S)}^{-1}\|\|\bar{S}\|$. Therefore, by~\eqref{eq:dlineart} and~\eqref{eq:-1a} we have
  \begin{align*}
  {\E}_k\left[d^2_{\bar{S}}({z}^{k+1},\mathcal{S})\right] \leq  d_{\bar{S}}^2(z^k,\mathcal{S})  -\tfrac{1}{\phi} d_{\bar{S}}^2(z^k,\mathcal{S}). %\label{eq:linrate}
  \end{align*}
  Taking expectation in both sides concludes the proof.
  For the case of PLQ functions, let $\mathcal{U}_{z_\star}$ denote an open subregularity neighborhood around $z_\star\in\mathcal{S}$, and set $\mathcal{U}_{\star}\coloneqq \cup_{z^\star\in\mathcal{S}}\mathcal{U}_{z_\star}$. By \Cref{lem:PLQ} there exists a positive $\eta$ such that $d(z,F^{-1} 0) \leq \eta d(0,Fz)$ for $z\in \mathcal{U}_{\star}$. Moreover, since $\seq{z^k}\subseteq\mathcal{C}$ up to possibly enlarging $\mathcal{C}$ we have $\seq{\bar z^k}\subseteq\mathcal{C}$. 
  Note that since $\seq{z^k} \subseteq \mathcal{C}$ and $\mathcal{C}$ is closed, $\mathcal{C}\cap \mathcal{S} \neq \emptyset$ and $\mathcal{C}\cap \mathcal{U}_\star \neq \emptyset$. 
It is sufficient to show that $d(z,F^{-1}0)\leq \eta' d(0,Fz)$ for $z\in\mathcal{C}$. 
    Let us define $D(z) \coloneqq d(0,Fz)$.  Since $\gra F$ is closed, $D(z)$ is lower semicontinuous \cite[Thm. 5.7, Prop. 5.11(a)]{rockafellar2009variational}. 
By \cite[Cor. 1.10]{rockafellar2009variational} $D(z)$ attains a minimum over the compact set $\mathcal{C}\setminus \mathcal{U}_{\star}$: $c_{d}\coloneqq \min_{z\in\mathcal{C}\setminus \mathcal{U}_\star} D(z)>0$ where the strict inequality is due to the fact that the minimizer belongs to $\mathcal{C}\setminus \mathcal{U}_\star$. 
Moreover, $c_\mathcal{C}\coloneqq \sup_{z\in\mathcal{C}} d(z,F^{-1}0)<\infty$ due to the fact that  $\mathcal{C}$ is bounded. Hence  
% \begin{equation*}
  $d(z,F^{-1} 0) \leq c_\mathcal{C} \leq  \tfrac{c_\mathcal{C}}{c_d} d(0,Fz)$  for $z\in \mathcal{C}\setminus \mathcal{U}_{\star}$.
% \end{equation*}
Therefore, by combining the two cases we obtain $d(z,F^{-1} 0) \leq \max\{ \tfrac{c_\mathcal{C}}{c_d}, \eta \} d(0,Fz)$ for $ z\in \mathcal{C}$ as claimed.  
  The second sufficient condition follows from \Cref{lem:QuadGrow}. 
  \end{appendixproof}

\begin{appendixproof}{lem:EquivMetr}
  First we show the if statement: assume that $R=\id-T$ is metrically subregular at $z^\star$ for $0$. Then there exists $\eta>0$ and a neighborhood $\mathcal{U}$ of $z^\star$ such that 
    \begin{equation}\label{eq:110}
      d(z,R^{-1}0) \leq \eta d(0,Rz)\quad \forall z\in\mathcal{U}.
    \end{equation}
    The two sets $R^{-1}0$ and $F^{-1}0$ are equal, \cf \eqref{eq:SfixT}. In what follows, we upper bound $d(0,Rz)$ by $d(0,Fz)$. 
% \nf{In the appendix, only number the equations that we refer to.} 
    Let $w\in Fz = A{z}+M{z}+C{z}$. By  \eqref{eq:op1} we have that
    \begin{equation*}
      Hz-Mz-Cz-H\bar{z}\in A\bar{z}. 
    \end{equation*}
    Using this together with the monotonicity of $A$ at $z$ and $\bar{z}$, we obtain:
    \begin{align*}
      0\leq&\langle z-\bar{z},\left(w-M{z}-Cz\right)-\left(Hz-Mz-Cz-H\bar{z} \right)\rangle\nonumber\\=&  \langle z-\bar{z},w-Hz+H\bar{z}\rangle = \langle z-\bar{z},w\rangle - \|\bar{z}-z\|_P^2, 
    \end{align*}
    where in the last equality we have used the fact that $H=P+K$ and $K$ is skew-symmetric. 

   By the Cauchy–Schwarz inequality 
       \begin{equation*}
         \|\bar{z}-z\|_P^2\leq \langle z-\bar{z},w\rangle \leq \|\bar{z}-z\|_P \|w\|_{P^{-1}}, 
       \end{equation*}
       therefore
       \begin{equation}\label{eq:1101}
         \|\bar{z}-z\|_P \leq \|w\|_{P^{-1}}. 
       \end{equation}
       On the other hand by \eqref{eq:op2}:
       \begin{equation*}
         \|Rz\| \leq \|S^{-1}(H+M^\top)P^{-1/2}\|\|\bar{z}-z\|_P. 
       \end{equation*}
       Combine this with \eqref{eq:110} and \eqref{eq:1101} to obtain
       \begin{align*}
         d(z,F^{-1}0)=&d(z,R^{-1}0) \leq \eta \|Rz\|\nonumber \\\leq& \eta \|S^{-1}(H+M^\top)P^{-1/2}\|\|P^{-1}\|^{1/2}\|w\|.
       \end{align*}
       Since $w\in Fz$ was arbitrary, we conclude that $F$ is metrically subregular at $z^\star$ for $0$  (with a different subregularity modulus). % $\|S^{-1}(H+M)P^{-1/2}\|\|P^{-1}\|$. }
   
       Next we prove the \emph{only if} statement: assume that $F$ is metrically subregular at $z^\star$ for $0$, \ie, there exists $\eta>0$ and neighborhood $\mathcal{U}$ of $z^\star$ such that 
         \begin{equation}\label{eq:110b}
      d(z,F^{-1}0) \leq \eta d(0,Fz) \quad \forall z\in\mathcal{U}.
    \end{equation}
    By \eqref{eq:-12} and the Cauchy–Schwarz inequality we infer that
    \begin{equation*}
      \|\bar{z}-z\|\leq c \|z-z^\star\|,
    \end{equation*}
    for some positive constant $c$. Hence, there exists a neighborhood $\bar{\mathcal{U}}\subset\mathcal{U}$ of $z^\star$ such that if $z\in\bar{\mathcal{U}}$ then $\bar{z}\in\mathcal{U}$. Fix a point $z\in\bar{\mathcal{U}}$ so that $\bar{z}\in\mathcal{U}$. By \eqref{eq:110b} it holds that:   
    \begin{equation}\label{eq:110d}
      d(\bar{z},F^{-1}0) \leq \eta d(0,F\bar{z}).
    \end{equation}
    Define $v$ as in \eqref{eq:wn} (dropping the iteration index $k$). Noting that $v\in F\bar{z}$, it follows from \eqref{eq:110d} that 
  %   \begin{equation}\label{eq:wn}
  % v\coloneqq -(H-M)(\bar{z}-z) + C\bar{z}- Cz.
  % \end{equation}
  % It follows from~\eqref{eq:op1} that $(H-M-C)z\in(H+D)\bar{z}$, which in turn implies 
  % \begin{equation}\label{eq:incwn}
  % v\in F\bar{z}=(A+M+C)\bar{z}.
  % \end{equation} 
  %   Using the triangle inequality and Lipschitz continuity of $C$ we obtain 
  % \begin{align}
  % \|v\| & =\|(H-M)(\bar{z}-z)-C\bar{z}+C{z}\| \nonumber\\&\leq \|(H-M)(\bar{z}-z)\|+\|C\bar{z}-Cz\| \leq\xi\|\bar{z}-z\|, \label{eq:vbound}
  % \end{align}
  % where $\xi=\|H-M\| + \|Q\|$. %is a constant depending on $$ and the Lipschitz constant of $C$. 
% Use the upper bound for $\|v\|$ in \eqref{eq:110d} to derive:
   \begin{equation}\label{eq:110c}
      d(\bar{z},F^{-1}0) \leq \eta \|v\| \leq \eta \xi\|\bar{z}-z\|,
    \end{equation}
    where we used \eqref{eq:vbound} in the second inequality.
    Invoking triangle inequality we have 
    \begin{align}
      d({z},{R}^{-1}0) = & d({z},F^{-1}0) \leq d(\bar{z},F^{-1}0) + \|\bar{z}-z\|\nonumber \\ \leq & (1+\eta \xi)\|\bar{z}-z\|.\label{eq:1209} 
    \end{align}
    On the other hand by \eqref{eq:op2} it holds that 
    \begin{equation*}
    \|\bar{z}-z\|\leq \|(H+M^\top)^{-1}S\|\|Rz\|.
    \end{equation*}
    Combining this with \eqref{eq:1209} yields  
    \begin{align*}
      d({z},{R}^{-1}0)  \leq  (1+\eta \xi)\|(H+M^\top)^{-1}S\|\|Rz\| \quad \forall z\in\bar{\mathcal{U}},
    \end{align*}
\ie that $R$ is metrically subregular at $z^\star$ for $0$.
   \end{appendixproof}

\end{appendix}

\bibliographystyle{IEEEtran}
%\bibliographystyle{IEEEtranS}
%\bibliography{IEEEabrv,references}
\bibliography{references}

 \begin{IEEEbiography}[{\includegraphics[width=1in,height=1.25in,clip,keepaspectratio]{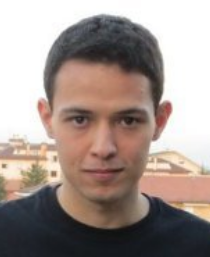}}]{Puya Latafat}
 	is currently working towards a joint PhD at the Department of Electrical Engineering (ESAT) of KU Leuven (Belgium) and IMT School for Advanced Studies Lucca (Italy). He received his M.Sc. in Mathematical Engineering jointly from University of L'Aquila (Italy) and University of Hamburg (Germany), and his B.Sc. in Electrical Engineering from University of Tabriz (Iran).  His research interests revolve around large-scale and distributed optimization with applications to model predictive control and machine learning. 
 \end{IEEEbiography}

 \begin{IEEEbiography}[{\includegraphics[width=1in,height=1.25in,clip,keepaspectratio]{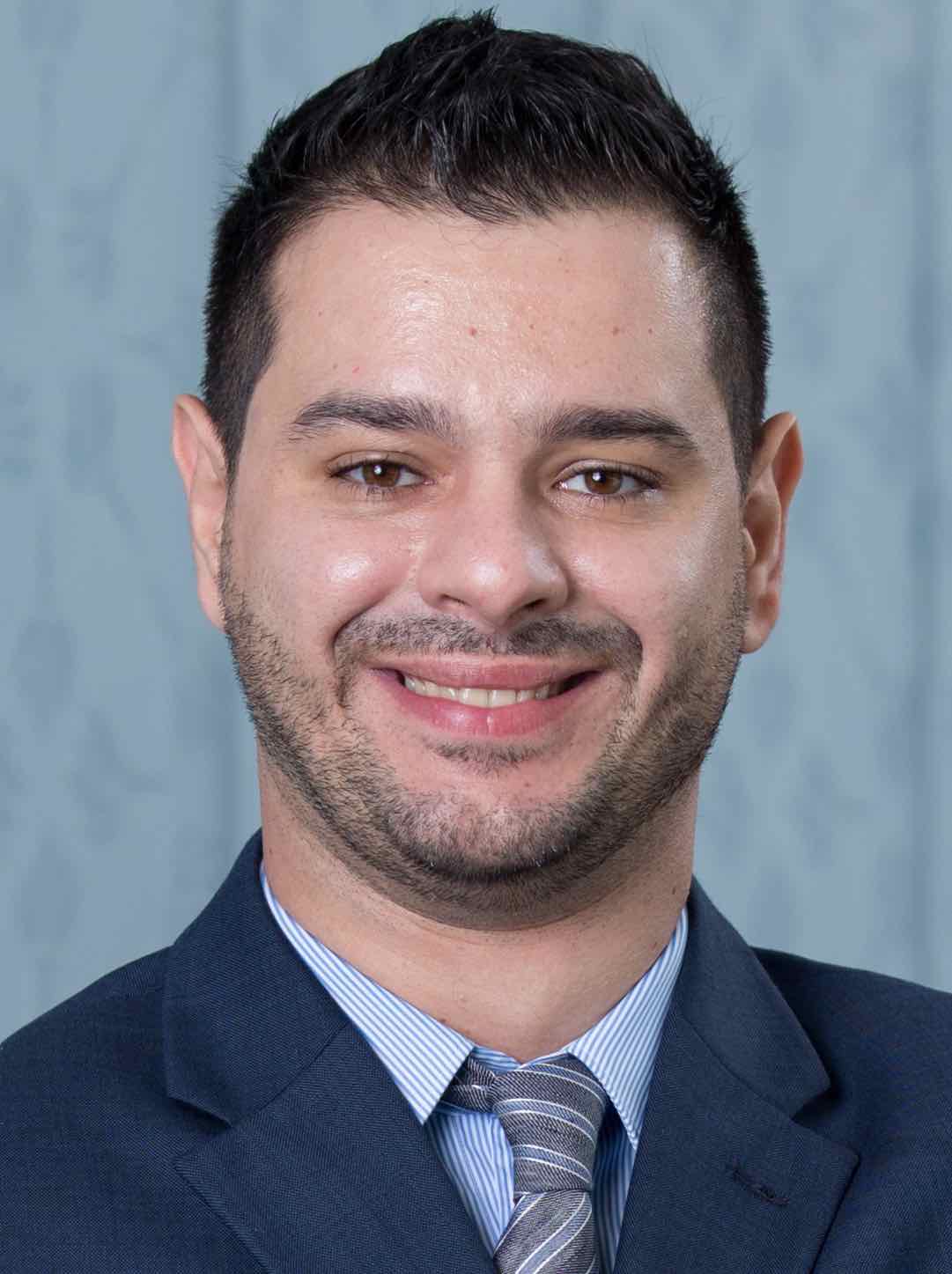}}]{Nikolaos M. Freris}
	is Professor with the School of Computer Science and Technology at the University of Science and Technology of China (USTC). He received a Diploma in Electrical and Computer Engineering from the National Technical University of Athens, Greece in 2005, an M.S. degree in Electrical and Computer Engineering, an M.S. degree in Mathematics, and a Ph.D. degree in Electrical and Computer Engineering all from the University of Illinois at Urbana-Champaign in 2007, 2008, and 2010, respectively. Dr. Freris's research interests lie in the area of cyberphysical systems: distributed estimation, optimization, and control, machine learning, wireless networks, signal processing, and applications in transportation, sensor networks, robotics, and power systems. His research was recognized with the 1000-talents award, the IBM High Value Patent award, two IBM invention achievement awards, and the Gerondelis foundation award. Previously, Dr. Freris was Assistant Professor of Electrical and Computer Engineering at New York University Abu Dhabi, and Global Network Assistant Professor of Computer Science at NYU Tandon School of Engineering. Dr. Freris is a senior member of IEEE, and a member of ACM and SIAM.
 \end{IEEEbiography}

 \begin{IEEEbiography}[{\includegraphics[width=1in,height=1.25in,clip,keepaspectratio]{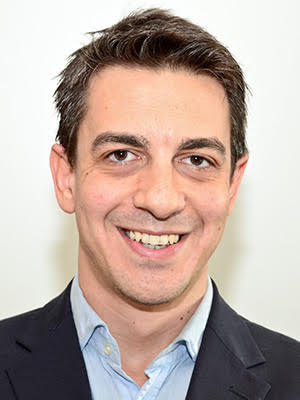}}]{Panagiotis (Panos) Patrinos}
	 is assistant professor at the Department of Electrical Engineering (ESAT) of KU Leuven, Belgium since 2015. During fall/winter 2014 he held a visiting assistant professor position at Stanford University. He received his PhD in Control and Optimization, M.S. in Applied Mathematics and M.Eng. from National Technical University of Athens, in 2010, 2005 and 2003, respectively. After his PhD he held postdoc positions at the University of Trento and IMT Lucca, Italy, where he became an assistant professor in 2012. His current research interests are in the theory and algorithms of structured convex and nonconvex optimization and predictive control with a focus on large-scale, distributed, stochastic and embedded optimization and a wide range of application areas including smart grids, water networks, automotive, aerospace, machine learning and signal processing. 
 \end{IEEEbiography}

\end{document}